\documentclass[reqno,12pt,a4paper]{amsart}

\usepackage{tikz}
\usetikzlibrary{matrix,arrows,calc,snakes,patterns,decorations.markings}

\usepackage[mathscr]{eucal}
\usepackage{graphics,epic,url}
\usepackage{amsfonts}
\usepackage{amscd}
\usepackage{latexsym}
\usepackage{amsmath,amssymb, amsthm, stmaryrd, bm, bbm}
\usepackage[all,2cell]{xy}
\usepackage{mathrsfs}

\setcounter{tocdepth}{1}

\textwidth15.1cm \textheight22.7cm \headheight12pt
\oddsidemargin.4cm \evensidemargin.4cm \topmargin0cm

\newtheorem{theorem}{Theorem}[section]
\newtheorem*{theorem*}{Theorem}
\newtheorem*{theorem2.7*}{Theorem 2.7}

\newtheorem{lemma}[theorem]{Lemma}
\newtheorem*{lemma2.6*}{Lemma 2.6}
\newtheorem{proposition}[theorem]{Proposition}
\newtheorem{corollary}[theorem]{Corollary}

\newtheorem*{conjecture*}{Conjecture}
\newtheorem{question}[theorem]{Question}
\newtheorem*{question*}{Question}
\theoremstyle{remark}
\newtheorem{remark}[theorem]{Remark}
\newtheorem{example}[theorem]{Example}
\theoremstyle{definition}
\newtheorem{definition}[theorem]{Definition}
\newtheorem{setup}[theorem]{Setup}

\newcommand{\prdim}{\opname{pr.dim}\nolimits}
\newcommand{\Aus}{\opname{Aus}\nolimits}
\newcommand{\MCM}{\opname{MCM}\nolimits}
\newcommand{\krdim}{\opname{kr.dim}\nolimits}


\newcommand{\confer}{{\em cf.~}\ }
\newcommand{\eg}{{\em e.g.~}\ }

\newcommand{\ul}[1]{\underline{#1}}
\newcommand{\ol}[1]{\overline{#1}}

\newcommand{\opname}[1]{\operatorname{\mathsf{#1}}}

\renewcommand{\mod}{\opname{mod}\nolimits}
\newcommand{\fdmod}{\opname{fdmod}\nolimits}

\newcommand{\proj}{\opname{proj}\nolimits}

\newcommand{\Mod}{\opname{Mod}\nolimits}

\newcommand{\add}{\opname{add}\nolimits}

\newcommand{\im}{\opname{im}\nolimits}
\renewcommand{\ker}{\opname{ker}\nolimits}

\newcommand{\thick}{\opname{thick}\nolimits}
\newcommand{\Tria}{\opname{Tria}\nolimits}
\newcommand{\per}{\opname{per}\nolimits}
\newcommand{\Perf}{\opname{Perf}\nolimits}
\newcommand{\Spec}{\opname{Spec}\nolimits}

\DeclareMathOperator{\Coh}{\mathsf{Coh}}

\newcommand{\Z}{\mathbb{Z}}

\newcommand{\C}{\mathbb{C}}

\newcommand{\F}{\mathbb{F}}
\newcommand{\G}{\mathbb{G}}
\newcommand{\I}{\mathbb{I}}
\newcommand{\ra}{\rightarrow}

\newcommand{\id}{\mathbf{1}}

%
%
\newcommand{\Hom}{\opname{Hom}}
\newcommand{\End}{\opname{End}}

\newcommand{\RHom}{\opname{RHom}}
\newcommand{\cHom}{\mathcal{H}\it{om}}
\newcommand{\cEnd}{\mathcal{E}\it{nd}}

\newcommand{\Ext}{\opname{Ext}}

\newcommand{\ten}{\otimes}
\newcommand{\lten}{\overset{\opname{L}}{\ten}}

\newcommand{\gldim}{\opname{gldim}\nolimits}

%
%
\newcommand{\ca}{{\mathcal A}}

\newcommand{\cc}{{\mathcal C}}
\newcommand{\cd}{{\mathcal D}}
\newcommand{\ce}{{\mathcal E}}
\newcommand{\cf}{{\mathcal F}}

\newcommand{\ch}{{\mathcal H}}

\newcommand{\ck}{{\mathcal K}}

\newcommand{\co}{{\mathcal O}}
\newcommand{\cp}{{\mathcal P}}

\newcommand{\cs}{{\mathcal S}}
\newcommand{\ct}{{\mathcal T}}
\newcommand{\cu}{{\mathcal U}}
\newcommand{\cv}{{\mathcal V}}

\renewcommand{\hat}[1]{\widehat{#1}}

\setcounter{page}{1}

\numberwithin{equation}{section}

\begin{document}

\title[Relative singularity categories I]{Relative singularity categories I: \\[1ex] Auslander resolutions}

\dedicatory{Dedicated to Idun Reiten on the occasion of her 70th birthday}
\author{Martin Kalck}
\thanks{M.K. was supported by DFG grant Bu--1866/2--1 and EPSRC grant EP/L017962/1.}
\address{Martin Kalck, The Maxwell Institute, School of Mathematics, James Clerk Maxwell Building, The King's Buildings, Mayfield Road, Edinburgh, EH9 3JZ, UK.}
\email{m.kalck@ed.ac.uk}

\author{Dong Yang}
\subjclass[2010]{Primary 14B05, 14E15; Secondary 18E30}
\thanks{D.Y. was supported by the DFG program SPP 1388 (YA297/1-1 and KO1281/9-1), a JSPS postdoctoral fellowship program (P12318) and the National Science Foundation in China No. 11401297.}

\address{Dong Yang, Department of Mathematics, Nanjing University, Nanjing 210093, PR China}
\email{yangdong@nju.edu.cn}

\begin{abstract}
Let $R$ be an isolated Gorenstein singularity with a non-commutative resolution $A=\End_R(R\oplus M)$. In this paper, we show that the relative singularity category $\Delta_R(A)$ of $A$ has a number of pleasant properties, such as being Hom-finite. Moreover, it determines the classical singularity category $\cd_{sg}(R)$ of Buchweitz and Orlov as a certain canonical quotient category.  If $R$ has finite CM type, which includes for example Kleinian singularities, then we show the much more surprising result that $\cd_{sg}(R)$ determines $\Delta_R(\Aus(R))$, where $\Aus(R)$ is the corresponding Auslander algebra.  The proofs of these results use dg algebras, $A_\infty$ Koszul duality, and the new concept of dg Auslander algebras, which may be of independent interest.

\noindent{\bf Keywords}: isolated Gorenstein singularity, non-commutative resolution, singularity category, relative singularity category, dg Auslander algebra.

%
\end{abstract}

\maketitle
\tableofcontents

\section{Introduction}
Triangulated categories of singularities were introduced and studied by Buchweitz \cite{Buchweitz87} and later also by Orlov \cite{Orlov04, Orlov09, Orlov11} who related them to Kontsevich's Homological Mirror Symmetry Conjecture. They may be seen as a categorical measure for the complexity of the singularities of a Noetherian scheme $X$. If $X$ has only isolated Gorenstein singularities $x_{1}, \ldots, x_{n}$, then the singularity category is triangle equivalent to the direct sum of the stable categories of maximal Cohen--Macaulay $\widehat{\co}_{x_{i}}$-modules (up to direct summands) \cite{Buchweitz87, Orlov11}.

Starting with Van den Bergh's works \cite{VandenBergh04, NCCR}, non-commutative analogues of (crepant) resolutions (NC(C)R) of singularities have been studied intensively in recent years. Non-commutative resolutions are useful even if the primary interest lies in commutative questions: for example, the Bondal-Orlov Conjecture concerning derived equivalences between (commutative) crepant resolutions and the derived McKay-Correspondence \cite{BridgelandKingReid01, KapranovVasserot00} led Van den Bergh to the notion of an NCCR. Moreover, moduli spaces of quiver representations provide a very useful technique to obtain commutative resolutions from non-commutative resolutions, see e.g.~\cite{NCCR, WemyssReconstructionTypeA}.

Inspired by the construction of the singularity category, Burban and the first author introduced and studied the notion of \emph{relative singularity categories} \cite{BurbanKalck11}. These categories measure the difference between the derived category of a non-commutative resolution (NCR) \cite{DaoIyamaTakahashiVial12} and the smooth part $K^b(\proj-R) \subseteq \cd^b(\mod-R)$ of the derived category of the singularity. Continuing this line of investigations, this article focuses on the relation between relative and classical singularity categories. 

The techniques developed in this article led to a `purely commutative' result: in joint work with Iyama and Wemyss \cite{KIWY12}, we decompose Iyama \& Wemyss' `new triangulated category' for complete rational surface singularities  \cite{IyamaWemyss09} into blocks of singularity categories of ADE-singularities.
Moreover, using relative singularity categories, Van den Bergh \& Thanhoffer de V\"olcsey showed \cite{ThanhofferdeVolcseyMichelVandenBergh10} that the stable category of a complete Gorenstein quotient singularity of Krull dimension three is a generalized cluster category \cite{Amiot09, Guolingyan11a}. We recover some of their results using quite different techniques. We proceed with a more detailed outline of the results of this article.
\subsection{Setup}\label{ss:SetupIntro}
Let $k$ be an algebraically closed field. Let $(R, \mathfrak{m})$ be a commutative local complete Gorenstein $k$-algebra such that $k\cong R/\mathfrak{m}$. Let 
\begin{align}\label{mcm}
\MCM(R)=\left\{\left.M \in \mod-R \, \right| \Ext^i_{R}(M, R)=0 \text{ for all } i>0 \right\}
\end{align}
be the full subcategory of \emph{maximal Cohen--Macaulay} $R$-modules. 
Let $M_{0}=R$, $M_{1}$, $\ldots$, $M_{t}$ be pairwise non-isomorphic indecomposable $\MCM$ $R$-modules and $A=\End_{R}(M:=\bigoplus_{i=0}^t M_{i})$. If $\gldim(A)< \infty$ then $A$ is called a \emph{non-commutative resolution} (NCR) of $R$ (\confer \cite{DaoIyamaTakahashiVial12}). For example, if $R$ has only finitely many indecomposable $\MCM$s and $M$ denotes their direct sum, then the \emph{Auslander algebra} $\Aus(\MCM(R)):=\End_{R}(M)$ is an NCR \cite[Theorem A.1]{Auslander84}.  There is a fully faithful triangle functor
$
K^b(\proj-R) \rightarrow \cd^b(\mod-A),
$
whose essential image equals $\thick(eA) \subseteq \cd^b(\mod-A)$, where $e \in A$ is the idempotent corresponding to the projection on $R$.

\begin{definition}
The \emph{relative singularity category} is the Verdier quotient category \begin{align}\label{E:DefRelSingCat} \Delta_{R}(A):=\frac{\cd^b(\mod-A)}{K^b(\proj-R)} \cong \frac{\cd^b(\mod-A)}{\thick(eA)}.\end{align} 
\end{definition}

\begin{definition}
The \emph{classical singularity category} is the Verdier quotient category 
\begin{align} \cd_{sg}(R):=\cd^b(\mod-R)/K^b(\proj-R). \end{align} 
\end{definition}

\begin{theorem}[Buchweitz \cite{Buchweitz87}]
$\ul{\MCM}(R) \cong \cd_{sg}(R)$ as triangulated categories.
\end{theorem}In the sequel, we often use this result to identify these two categories. 


\subsection{Main Result} It is natural to ask how the two concepts of singularity categories defined above are related. Our main result gives a first answer to this question.
\theoremstyle{theorem}
\newtheorem*{mainthm}{Theorem \ref{t:Classical-versus-generalized-singularity-categories}}
\begin{mainthm}
 Let $R$ and $R'$ be $\MCM$--representation finite complete Gorenstein $k$-algebras with Auslander algebras $A=\Aus(\MCM(R))$ and $A'=\Aus(\MCM(R'))$, respectively. Then the following statements are equivalent.
\begin{itemize}
\item[(i)] There is an equivalence $\ul{\MCM}(R) \cong \ul{\MCM}(R')$ of triangulated categories.
\item[(ii)] There is an equivalence $\Delta_{R}\!\left(A\right) \cong \Delta_{R'}\!\left(A'\right)$ of triangulated categories.
\end{itemize}
The implication $(ii) \Rightarrow (i)$ holds more generally for non-commutative resolutions $A$ and $A'$ of arbitrary isolated Gorenstein singularities $R$ and $R'$, respectively.
\end{mainthm}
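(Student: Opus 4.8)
The plan is to reconstruct each side of the equivalence intrinsically from the relative singularity category. First I would recall the key structural feature established in \cite{BurbanKalck11}: the relative singularity category $\Delta_R(A)$ for a non-commutative resolution $A=\End_R(M)$ fits into a recollement-type picture, and in particular contains a distinguished subcategory generated by the images of the indecomposable summands $M_i$ of $M$. The idempotent $e$ corresponds to the summand $M_0=R$, so the Verdier quotient kills precisely $\thick(eA)$. The essential observation is that $\Delta_R(A)$ has a natural orthogonal decomposition (or at least a semiorthogonal/filtration structure) whose "smooth" pieces are controlled by the global dimension of $A$, while the "singular" information is concentrated in a canonically defined subquotient. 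Concretely, for an isolated Gorenstein singularity the simple $A$-modules $S_1,\dots,S_t$ not supported at $e$ become, in $\Delta_R(A)$, a collection of objects with finite-dimensional graded endomorphism algebra, and I would argue that $\ul{\MCM}(R)$ is recovered as a suitable additive/triangulated quotient of $\thick$ of these objects, or dually via Koszul-type duality with the dg endomorphism algebra of $\bigoplus S_i$ in $\Delta_R(A)$.

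The key steps, in order: (1) Show that the canonical functor $\cd^b(\mod\text{-}A)\to\Delta_R(A)$ restricted to $\per(eA)$ is zero and that $\Delta_R(A)$ is idempotent-complete (pass to a closure if necessary), so that $\thick$ of the relevant objects is well-behaved. (2) Identify inside $\Delta_R(A)$ a canonical full triangulated subcategory $\ct$ — I expect $\ct=\thick$ of the images of the simple modules $S_1,\dots,S_t$, equivalently $\thick$ of the images of the $M_i$ modulo the image of $R$ — and show this subcategory is an invariant of the triangulated category $\Delta_R(A)$ alone (this is where isolatedness enters: it guarantees the $S_i$ are exactly the objects with some finiteness property, e.g. finite-dimensional total $\Hom$ into all objects, making $\ct$ intrinsic). (3) Compute the (dg or $A_\infty$) endomorphism algebra $\Lambda:=\mathrm{REnd}_{\Delta_R(A)}(\bigoplus_{i\geq 1} S_i)$ and show that it depends, up to quasi-isomorphism, only on $\ul{\MCM}(R)$ — in fact one expects $\Lambda$ to be essentially the (completed) dg endomorphism algebra of the MCM generator, or a derived preprojective-type algebra of $\ul{\MCM}(R)$. (4) Conclude: an equivalence $\Delta_R(A)\cong\Delta_{R'}(A')$ carries $\ct$ to $\ct'$ (by step 2) and hence induces a quasi-isomorphism $\Lambda\simeq\Lambda'$ (by step 3), from which $\ul{\MCM}(R)\cong\ul{\MCM}(R')$ follows by reconstructing each stable category from its MCM-generator's derived endomorphism algebra via a tilting/Morita argument. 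For the Auslander-algebra case one would run the same machine with $t$ equal to the full count of indecomposable MCMs, where additionally the converse direction is available.

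The main obstacle is Step 2: proving that the subcategory $\ct\subseteq\Delta_R(A)$ is genuinely intrinsic, i.e.\ preserved by every triangle equivalence and not just the obvious ones. The natural strategy is to characterize the images of the simple modules $S_1,\dots,S_t$ by a purely categorical property inside $\Delta_R(A)$ — for instance as the indecomposable objects $X$ such that $\bigoplus_{n\in\Z}\Hom_{\Delta_R(A)}(X,Y)$ is finite-dimensional for every $Y$, or as the simple objects of a bounded $t$-structure whose heart is intrinsically pinned down. Here the isolated-singularity hypothesis is essential: it forces $\Ext^\bullet_A(S_i,S_j)$ to be finite-dimensional and makes the relative singularity category "locally finite" away from $e$, so such a characterization has a chance of working; without it one cannot expect $\ct$ to be recoverable. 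A secondary technical point is handling idempotent completion and the passage between the triangulated quotient and its dg enhancement, so that the derived endomorphism algebra computation in Step 3 is legitimate; I would invoke the functoriality of dg quotients (Drinfeld/Keller) to make this precise.
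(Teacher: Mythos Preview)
Your Step~2 is correct and matches the paper exactly: the subcategory $\ct=\thick(S_1,\ldots,S_t)=\cd^b_{A/AeA}(\mod\text{-}A)$ is characterised intrinsically inside $\Delta_R(A)$ as the subcategory $\Delta_R(A)_{hf}$ of right homologically finite objects (those $X$ with $\bigoplus_n\Hom(Y,X[n])$ finite-dimensional for all $Y$). But your Steps~3--4 are a detour. Once $\ct$ is intrinsic, the paper recovers $\ul{\MCM}(R)$ simply as the Verdier quotient $\Delta_R(A)/\ct$; this follows from the abelian localisation $\mod\text{-}A/(\mod\text{-}A/AeA)\simeq\mod\text{-}eAe\simeq\mod\text{-}R$ combined with Buchweitz's equivalence. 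No dg endomorphism computation or tilting argument is needed for $(ii)\Rightarrow(i)$.

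The genuine gap is that you have the emphasis backwards: $(ii)\Rightarrow(i)$ is the easy direction, and $(i)\Rightarrow(ii)$, which you dispatch in one sentence, is where the work lies. An equivalence $\ul{\MCM}(R)\cong\ul{\MCM}(R')$ does not obviously lift to one between the relative singularity categories: the Auslander algebras $A$ and $A'$ live over different base rings (possibly of different Krull dimension, as in Kn\"orrer periodicity), and there is no direct functor between $\cd^b(\mod\text{-}A)$ and $\cd^b(\mod\text{-}A')$ to start from. The paper's solution is to construct, from the stable category alone, an explicit non-positive dg algebra $\Lambda_{dg}(\ul{\MCM}(R))$ (the \emph{dg Auslander algebra}: underlying graded quiver is the AR-quiver of $\ul{\MCM}(R)$ with one extra degree $-1$ arrow $\rho_i\colon i\to\tau^{-1}i$ per vertex, differential sending $\rho_i$ to the mesh relation) and to prove $\Delta_R(A)\simeq\per(\Lambda_{dg}(\ul{\MCM}(R)))$. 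This identification requires first realising $\Delta_R(A)\simeq\per(B)$ for some non-positive dg algebra $B$ via the recollement generated by $e$, then using the fractional Calabi--Yau property of the simples $S_i$ (forced by almost split sequences) to show that the $A_\infty$-Koszul dual $B^*=\Ext_A^*(\bigoplus S_i,\bigoplus S_i)$ is concentrated in degrees $0,1,2$ with a specific quiver shape, and finally applying Koszul double-duality $B\simeq E(B^*)$. Your proposal mentions Koszul duality in passing but does not isolate the fractional Calabi--Yau input, which is precisely what pins down the degree structure of $B^*$ and hence makes $B$ depend only on the stable category.
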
 
\noindent Kn\"orrer's periodicity theorem \cite{Knoerrer87, Solberg89} yields a wealth of non-trivial examples for $(i)$: 
\begin{align}\label{E:Knoerrer}
\ul{\MCM}\big(S/(f)\big) \stackrel{\sim}\longrightarrow \ul{\MCM}\big(S\llbracket x, y \rrbracket/(f + xy )\big), 
\end{align}
where $\mathrm{char} \, k \neq 2$, $S=k\llbracket z_{0}, \ldots, z_{d}\rrbracket$,  $f$ is a non-zero element in $(z_{0}, \ldots, z_{d})$ and $d \geq 0$.

\begin{example} Let $R=\C\llbracket x \rrbracket/(x^2)$ and $R'=\C\llbracket x, y, z \rrbracket/(x^2+yz)$. Kn\"orrer's equivalence (\ref{E:Knoerrer}) in conjunction with Theorem \ref{t:Classical-versus-generalized-singularity-categories} above, yields a triangle equivalence $\Delta_{R}(\Aus(\MCM(R))) \cong \Delta_{R'}(\Aus(\MCM(R')))$, which may be written explicitly as  
\begin{align}
\frac{\cd^b\left(\left.
\begin{xy}
\SelectTips{cm}{10}
\xymatrix{1 \ar@/^10pt/[rr]|{\,\, pr\,\,}  && 2 \ar@/^10pt/[ll]|{\,\, incl \,\,} }\end{xy}\right/ (pr \circ incl)
\right)}{K^b(\add P_{1})}
\stackrel{\sim}\longrightarrow
\frac{\cd^b\left(\left.
\begin{xy}
\SelectTips{cm}{10}
\xymatrix{1 \ar@<-1.5pt>@/^10pt/[rr]|{\,\, y\,\,} \ar@<5pt>@/^10pt/[rr]|{\,\, x\,\,} && 2 \ar@<-1.5pt>@/^10pt/[ll]|{\,\, y\,\,} \ar@<5pt>@/^10pt/[ll]|{\,\, x\,\,}}
\end{xy}\right/ (xy-yx)
\right)}{K^b(\add P_{1})}.
\end{align}
\noindent The quiver algebra on the right is the completion of the preprojective algebra of the Kronecker quiver $\begin{xy}\SelectTips{cm}{10}\xymatrix{\circ \ar@/^/[r] \ar@/_/[r] &  \circ}\end{xy}$. The derived McKay--Correspondence \cite{KapranovVasserot00, BridgelandKingReid01} shows that these categories are triangle equivalent to the quotient category
\begin{align}
\Delta_{R'}(Y):=\frac{\cd^b(\Coh Y)}{\pi^*\left(\Perf(R')\right)},
\end{align}
where $\pi \colon Y \to \Spec(R')$ denotes the minimal resolution of singularities.
\end{example}

\subsection{Idea of the proof}
We prove Theorem \ref{t:Classical-versus-generalized-singularity-categories} by developing a 
general dg algebra framework. 
More precisely, to every Hom-finite idempotent complete algebraic triangulated category $\ct$ with finitely many indecomposable objects
satisfying a certain extra condition
(e.g.~this holds for $\ct=\ul{\MCM}(R)$), we associate  a 
dg algebra $\Lambda_{dg}(\ct)$ called the \emph{dg Auslander algebra of $\ct$} (Definition~\ref{d:dg-aus-alg}). It is completely determined by the triangulated category $\ct$. Now, using recollements generated by idempotents, Koszul duality and the fractional Calabi--Yau property \eqref{E:IntroFractCY}, we prove the existence of an equivalence of triangulated categories (Theorem \ref{t:main-thm-2})
\begin{align}
\Delta_{R}\Bigl(\Aus\bigl(\MCM(R)\bigr)\Bigr) \cong \per\Bigl(\Lambda_{dg}\bigl(\ul{\MCM}(R)\bigr)\Bigr). 
\end{align}
In particular, this shows that (i) implies (ii). Conversely, written in this language, the quotient functor (\ref{E:class-as-quotient}), induces an equivalence of triangulated categories
\begin{align}\label{E:Cluster-Equiv}
\frac{\per\Big(\Lambda_{dg}\big(\ul{\MCM}(R)\big)\Big)}{\cd_{fd}\Big(\Lambda_{dg}\big(\ul{\MCM}(R)\big)\Big)} \longrightarrow \ul{\MCM}(R).
\end{align} Since the category $\cd_{fd}(\Lambda_{dg}(\ul{\MCM}(R)))$ of dg modules with finite-dimensional total cohomology admits an intrinsic characterization in $\per(\Lambda_{dg}(\ul{\MCM}(R)))$, this proves that $\ul{\MCM}(R)$ is determined by $\Delta_{R}(\Aus(\MCM(R)))$. Hence, (ii) implies (i).

\newtheorem*{example*}{Example}
\begin{example*}
Let $R=\C\llbracket z_{0}, \ldots, z_{d}\rrbracket/(z_{0}^{n+1} + z_{1}^2 + \ldots + z_{d}^2)$ be an $A_{n}$-singularity of \emph{even} Krull dimension. Then the graded quiver $Q$ of $\Lambda_{dg}(\ul{\MCM}(R))$ is given as

\begin{equation*}\begin{tikzpicture}[description/.style={fill=white,inner sep=2pt}]
    \matrix (n) [matrix of math nodes, row sep=3em,
                 column sep=2.5em, text height=1.5ex, text depth=0.25ex,
                 inner sep=0pt, nodes={inner xsep=0.3333em, inner
ysep=0.3333em}]
    {  
       1 & 2 & 3 &\cdots& n-1 &n \\
    };
    
    \draw[->] ($(n-1-1.east) + (0mm,1mm)$) .. controls +(2.5mm,1mm) and
+(-2.5mm,+1mm) .. ($(n-1-2.west) + (0mm,1mm)$);
  \node[scale=0.75] at ($(n-1-1.east) + (5.5mm, 3.8mm)$) {$\alpha_{1}$};
  
    \draw[->] ($(n-1-2.west) + (0mm,-1mm)$) .. controls +(-2.5mm,-1mm)
and +(+2.5mm,-1mm) .. ($(n-1-1.east) + (0mm,-1mm)$);
\node[scale=0.75] at ($(n-1-1.east) + (5.5mm, -4.1mm)$) {$\alpha^*_{1}$};

    \draw[->] ($(n-1-2.east) + (0mm,1mm)$) .. controls +(2.5mm,1mm) and
+(-2.5mm,+1mm) .. ($(n-1-3.west) + (0mm,1mm)$);
  \node[scale=0.75] at ($(n-1-2.east) + (5.5mm, 3.8mm)$) {$\alpha_{2}$};
  
    \draw[->] ($(n-1-3.west) + (0mm,-1mm)$) .. controls +(-2.5mm,-1mm)
and +(+2.5mm,-1mm) .. ($(n-1-2.east) + (0mm,-1mm)$);
  \node[scale=0.75] at ($(n-1-2.east) + (5.5mm, -4.1mm)$) {$\alpha^*_{2}$};

    \draw[->] ($(n-1-3.east) + (0mm,1mm)$) .. controls +(2.5mm,1mm) and
+(-2.5mm,+1mm) .. ($(n-1-4.west) + (0mm,1mm)$);
  \node[scale=0.75] at ($(n-1-3.east) + (5.5mm, 3.8mm)$) {$\alpha_{3}$};

    \draw[->] ($(n-1-4.west) + (0mm,-1mm)$) .. controls +(-2.5mm,-1mm)
and +(+2.5mm,-1mm) .. ($(n-1-3.east) + (0mm,-1mm)$);
  \node[scale=0.75] at ($(n-1-3.east) + (5.5mm, -4.1mm)$) {$\alpha^*_{3}$};
   
    \draw[->] ($(n-1-4.east) + (0mm,1mm)$) .. controls +(2.5mm,1mm) and
+(-2.5mm,+1mm) .. ($(n-1-5.west) + (0mm,1mm)$);
  \node[scale=0.75] at ($(n-1-4.east) + (5.5mm, 3.8mm)$) {$\alpha_{n-2}$};

    \draw[->] ($(n-1-5.west) + (0mm,-1mm)$) .. controls +(-2.5mm,-1mm)
and +(+2.5mm,-1mm) .. ($(n-1-4.east) + (0mm,-1mm)$);
  \node[scale=0.75] at ($(n-1-4.east) + (5.5mm, -4.1mm)$) {$\alpha^*_{n-2}$};

    \draw[->] ($(n-1-5.east) + (0mm,1mm)$) .. controls +(2.5mm,1mm) and
+(-2.5mm,+1mm) .. ($(n-1-6.west) + (0mm,1mm)$);
  \node[scale=0.75] at ($(n-1-5.east) + (5.5mm, 3.8mm)$) {$\alpha_{n-1}$};

    \draw[->] ($(n-1-6.west) + (0mm,-1mm)$) .. controls +(-2.5mm,-1mm)
and +(+2.5mm,-1mm) .. ($(n-1-5.east) + (0mm,-1mm)$);
  \node[scale=0.75] at ($(n-1-5.east) + (5.5mm, -4.1mm)$) {$\alpha^*_{n-1}$};

 \draw[dash pattern = on 0.5mm off 0.3mm,->] ($(n-1-1.south) +
    (1.2mm,0.5mm)$) arc (65:-245:2.5mm);
    \node[scale=0.75] at ($(n-1-1.south) + (0mm,-6.5mm)$) {$\rho_{1}$};
 
  \draw[dash pattern = on 0.5mm off 0.3mm,->] ($(n-1-2.south) +
    (1.2mm,0.5mm)$) arc (65:-245:2.5mm);
   \node[scale=0.75] at ($(n-1-2.south) + (0mm,-6.5mm)$) {$\rho_{2}$};
 
   \draw[dash pattern = on 0.5mm off 0.3mm,->] ($(n-1-3.south) +
    (1.2mm,0.5mm)$) arc (65:-245:2.5mm);
   \node[scale=0.75] at ($(n-1-3.south) + (0mm,-6.5mm)$) {$\rho_{3}$};
   
     \draw[dash pattern = on 0.5mm off 0.3mm,->] ($(n-1-5.south) +
    (1.2mm,0.5mm)$) arc (65:-245:2.5mm);
   \node[scale=0.75] at ($(n-1-5.south) + (0mm,-6.5mm)$) {$\rho_{n-1}$};
   
     \draw[dash pattern = on 0.5mm off 0.3mm,->] ($(n-1-6.south) +
    (1.2mm,0.5mm)$) arc (65:-245:2.5mm);
   \node[scale=0.75] at ($(n-1-6.south) + (0mm,-6.5mm)$) {$\rho_{n}$};
\end{tikzpicture}\end{equation*}
where the broken arrows are concentrated in degree $-1$ and the remaining generators, i.e.~ solid arrows and idempotents, are in degree $0$. The continuous $k$--linear differential $d\colon \widehat{kQ} \ra \widehat{kQ}$ is completely specified by sending $\rho_{i}$ to the mesh relation (or preprojective relation) starting in the vertex $i$, e.g.~ $d(\rho_{2})=\alpha_{1}\alpha_{1}^*+\alpha_{2}^*\alpha_{2}$.
\end{example*}
 We include a complete list of (the graded quivers, which completely determine) the dg Auslander algebras for ADE--singularities in all Krull dimensions in the Appendix.

\begin{remark}
The triangle equivalence \eqref{E:Cluster-Equiv} and its proof yield relations to generalized cluster categories \cite{Amiot09, Guolingyan11a,ThanhofferdeVolcseyMichelVandenBergh10} and stable categories of special Cohen--Macaulay modules over complete rational surface singularities \cite{Wunram88, IyamaWemyss09, KIWY12}. Moreover, Bridgeland determined a connected component of the stability manifold of $\cd_{fd}\bigl(\Lambda_{dg}(\ul{\MCM}(R))\bigr)$ for ADE-surfaces $R$ \cite{Bridgeland09}. We refer to Section \ref{s:Concluding} for more details on these remarks. 
\end{remark}

\subsection{General properties of relative singularity categories}
In the notations of the setup given in Subsection \ref{ss:SetupIntro} above, we assume that $R$ has an \emph{isolated} singularity and that  $A$ is an NCR of $R$. Let $\ul{A}:=A/AeA \cong\ul{\End}_{R}(M)$ be the corresponding stable endomorphism algebra. Since $R$ is an isolated singularity, $\ul{A}$ is a finite-dimensional $k$-algebra. We denote the simple $\ul{A}$-modules by $S_{1}, \ldots, S_{t}$.  Then the relative singularity category $\Delta_{R}(A)=\cd^b(\mod-A)/K^b(\proj-R)$ has the following properties:
\begin{itemize}
\item[(a)] All morphism spaces are finite-dimensional over $k$ (see \cite
{ThanhofferdeVolcseyMichelVandenBergh10} or Prop. \ref{p:hom-finiteness-of-delta}).
\item[(b)] Let $M_{t+1}, \ldots, M_{s}$ be further indecomposable $\MCM$ $R$-modules and  let $A'=\End_{R}(\bigoplus_{i=0}^s M_{i})$. There exists a fully faithful triangle functor (Prop. \ref{P:Embedding-of-relative})
\begin{align}
\Delta_{R}(A) \longrightarrow \Delta_{R}(A').
\end{align}
\item[(c)] $\Delta_{R}(A)$ is idempotent complete and $K_{0}\big(\Delta_{R}(A)\big)\cong \mathbb{Z}^t$ (see \cite[Thm. 3.2]{BurbanKalck11}).
\item[(d)] There is an exact sequence of triangulated categories (see \cite{ThanhofferdeVolcseyMichelVandenBergh10} or Prop.~\ref{C:From-gen-to-class}) \begin{equation}\label{E:class-as-quotient} \thick(S_{1}, \ldots, S_{t}) = \cd^b_{\ul{A}}(\mod-A) \longrightarrow \Delta_{R}(A) \longrightarrow \cd_{sg}(R), \end{equation}
where $\cd^b_{\ul{A}}(\mod-A)\subseteq \cd^b(\mod-A)$ denotes the full subcategory consisting of complexes with cohomologies in $\mod-\ul{A}$. Moreover, this subcategory admits an intrinsic description inside $\cd^b(\mod-A)$ (see \cite
{ThanhofferdeVolcseyMichelVandenBergh10}  or Cor. \ref{C:Intrinsic}). 
\item[(e)] If $\add M$ has $d$--almost split sequences \cite{Iyama07a}, then $\cd^b_{\ul{A}}(\mod-A)$ has a Serre functor $\nu$, whose action on the generators $S_{i}$ is given by \begin{align}\label{E:IntroFractCY}\nu^n(S_{i}) \cong S_{i}[n(d+1)],\end{align} where $n=n(S_{i})$ is given by the length of the $\tau_{d}$--orbit of $M_{i}$ (Thm.~\ref{t:fractionally-cy-property}).
\item[(f)] In the setup of (e), the sequence in (d) gives a triangle equivalence (Prop.~\ref{R:Orlov})
\begin{align}
\Delta_R(A)_{sg} \cong \cd_{sg}(R),
\end{align} 
where $\Delta_R(A)_{sg}=\Delta_R(A)/\Delta_R(A)_{hf}$ is Orlov's general singularity category construction for arbitrary triangulated categories \cite[Definition 1.7.]{Orlov09}.
\item[(g)] If $\krdim R=3$ and $\MCM(R)$ has a cluster-tilting object $M$, then $C=\End_R(M)$ is a \emph{non-commutative crepant resolution} of $R$, see~\cite[Section 5]{Iyama07}. If $M'$ is another cluster-tilting object in $\MCM(R)$ and $C'=\End_R(M')$, then $?\lten_{C}\Hom_R(M',M)\colon\cd^b(\mod C)\rightarrow\cd^b(\mod C')$ is a triangle equivalence (see \emph{loc. cit.} and~\cite[Prop. 4]{Palu09}), which is compatible with the embeddings from $K^b(\proj R)$~\cite[Cor. 5]{Palu09}. Hence, one obtains a triangle equivalence
\begin{align}
\Delta_R(C)\longrightarrow\Delta_R(C').
\end{align}
\item[(h)] Assume additionally that $R$ has Krull dimension $d \geq 2$ and that $A$ is an NCCR of $R$. Then the subcategory $\Delta_R(A)_{lhf}=\Delta_R(A)_{rhf} \subseteq \Delta_R(A)$ of left (respectively, right) homologically finite objects is $d$-Calabi--Yau. By Proposition \ref{R:Orlov}, this subcategory equals $\thick(\mod-\ul{A}) \subseteq \cd^b(A)$, which is $d$-CY since $\ul{A}$ is finite dimensional, see e.g. \cite[Theorem 4.23]{WemyssLectures}. 

This subcategory can be Calabi--Yau even if $A$ is not an NCCR. For example, let $R_k$ be a $2k$-dimensional ADE-singularity and let $A_k=\Aus(\MCM(R_k))$ be the Auslander algebra. Then $\Delta_{R_k}(A_k) \cong \Delta_{R_1}(A_1)$ for all $k \geq 1$ by our main result (Theorem \ref{t:Classical-versus-generalized-singularity-categories}) and $\Delta_{R_1}(A_1)_{lhf}$ is $2$-CY by the discussion above. But $A_k$ is an NCCR if and only if $k=1$.

\item[(i)] Let $\left(\cd_{\ul{A}}(\Mod-A)\right)^c \subseteq \cd^b_{\ul{A}}(\Mod-A)$ be the full subcategory of compact objects. There is an equivalence of triangulated categories (see Rem. \ref{r:CompInterprOfRelSingCat}).
\begin{align}Ê\Delta_{R}(A) \cong \bigl(\cd_{\ul{A}}(\Mod-A)\bigr)^c. \end{align} 
\end{itemize}

\begin{remark}
 The Hom-finiteness in (a) is surprising since (triangulated) quotient categories tend to behave quite poorly in this respect (see e.g.~\cite[Remark 6.5.]{BurbanKalck11}).
\end{remark}

\begin{remark}
All our results actually hold in the generality of Gorenstein $S$-orders with an isolated singularity, in the sense of Auslander (see \cite[Section III.1]{Auslander76} and \cite{Auslander84}) or finite-dimensional selfinjective $k$-algebras. Here $S=(S, \mathfrak{n})$ denotes a local complete regular Noetherian $k$-algebra, with $k \cong S/\mathfrak{n}$. It is a matter of heavier notation and terminology to generalize our proofs to this setting.
\end{remark}

\subsection{Contents}
Section \ref{s:derived-cat} provides the necessary material on dg algebras and derived categories of dg modules. Moreover, we give an apparently new criterion for the $\Hom$--finiteness of the category of perfect dg modules $\per(B)$ over some non-positive dg $k$-algebra $B$ (Proposition \ref{p:hom-finiteness-of-per}).  
Further, we study recollements of derived categories of dg modules associated to an idempotent. Section \ref{S:Tale} deals with equivalences between triangulated (quotient) categories arising from derived module categories of a right Noetherian ring $A$ and idempotents in $A$. This is used in Section \ref{S:MCM-over-Gor} to show that $\ul{\MCM}(R)$ may be obtained as a triangle quotient of $\Delta_{R}(A)$.
The first result in Section \ref{s:class-to-relative} shows that certain relative singularity categories 
enjoy a (weak) fractional Calabi--Yau property (Theorem \ref{t:fractionally-cy-property}). 
We use this and the results of Section \ref{s:derived-cat}
to show that for `finite' Frobenius categories the relative Auslander singularity categories only depend 
on the stable category (Theorem \ref{t:main-thm-2}). The results from Sections \ref{s:derived-cat}  to \ref{s:class-to-relative} are applied in Section \ref {S:MCM-over-Gor}  to study relative singularity categories over complete isolated Gorenstein $k$-algebras $R$. In particular, we prove our main result (Theorem \ref{t:Classical-versus-generalized-singularity-categories}). In Section \ref{s:Concluding}, we remark on relations to Bridgeland's stability manifold and generalized cluster categories.
In Appendix~\ref{Appendix}, we give a complete list of the dg Auslander algebras for the complex ADE--singularities in all Krull dimensions.

\medskip
\noindent
\emph{Acknowledgement}. The authors would like to thank Hanno Becker, Igor Burban, Will Donovan,
Bernhard Keller, Hao Su, Michel Van den Bergh and Michael Wemyss for helpful discussions on parts of this article. They express their deep gratitude to Bernhard Keller for pointing out an error in a preliminary version and helping to correct it. Moreover, they thank Igor Burban and Michael Wemyss for valuable suggestions concerning the presentation of the material. They thank the referee for helpful comments and suggestions, which led to an improvement of the text. M.K. thanks Stefan Steinerberger for improving the language of this article.
 
 \pagebreak

\section{Derived categories, dg algebras and Koszul duality}\label{s:derived-cat}
\subsection{Notations}\label{ss:notation}
Let $k$ be a commutative ring. Let $D=\Hom_k(?,k)$ denote the
$k$-dual. When the input is a graded $k$-module, $D$ means the
graded dual. Namely, for $M=\bigoplus_{i\in\mathbb{Z}}M^i$, the graded dual $DM$
has components $(DM)^i=\Hom_k(M^{-i},k)$.
\newcommand{\Add}{\opname{Add}}
\subsubsection*{Generating subcategories/subsets}

Let $\ca$ be an additive $k$-category. Let $\cs$ be a subcategory or a subset of objects of $\ca$. We denote by $\add_{\ca}(\cs)$
(respectively, $\Add_{\ca}(\cs)$) the smallest full subcategory of
$\ca$ which contains $\cs$ and which is closed under taking finite
direct sums (respectively, all existing direct sums) and taking
direct summands.

If $\ca$ is a triangulated category, then 
$\thick_{\ca}(\cs)$ (respectively, $\Tria_{\ca}(\cs)$) denotes the smallest
triangulated subcategory of $\ca$ which contains $\cs$ and which is
closed under taking direct summands (respectively, all existing
direct sums). In this situation, the class $\cs$ is called a class of generators of $\thick_\ca(\cs)$. Ê

When it does not cause confusion, we omit the subscripts and write
the above notations as $\add(\cs)$, $\Add(\cs)$, $\thick(\cs)$ and
$\Tria(\cs)$.

\subsubsection*{Derived categories of abelian categories}

Let $\ca$ be an additive $k$-category. 
Let $*\in\{\emptyset,-,+,b\}$ be a boundedness condition. Denote by $K^*(\ca)$ the homotopy
category of complexes of objects in $\ca$ satisfying the boundedness
condition $*$.

Let $\ca$ be an abelian $k$-category. 
Denote by $\cd^*(\ca)$ the derived category of complexes of objects
in $\ca$ satisfying the boundedness condition $*$.

Let $R$ be a $k$-algebra. Without further remark, by an $R$-module
we mean a right $R$-module. Denote by $\Mod-R$ the category of
$R$-modules, and denote by $\mod-R$ (respectively, $\proj-R$) its
full subcategory of finitely generated $R$-modules (respectively,
finitely generated projective $R$-modules). When $k$ is a field, we will also
consider the category $\fdmod-R$ of those $R$-modules which are
finite-dimensional over $k$. We often view
$K^b(\proj-R)$ as a triangulated subcategory of $\cd^*(\Mod-R)$.

\subsubsection*{Truncations}
Let $\ca$ be an abelian $k$-category. For $i\in\mathbb{Z}$ and for a
complex $M$ of objects in $\ca$, we define the \emph{standard
truncations} $\sigma^{\leq i}$ and $\sigma^{>i}$ by 
\begin{align*}(\sigma^{\leq i}M)^j=\begin{cases} M^j & \text{ if
} j<i,\\ \ker d_M^i & \text{ if } j=i,\\ 0 & \text{ if }
j>i,\end{cases} &&&& (\sigma^{>
i}M)^j=\begin{cases} 0 & \text{ if } j<i,\\
{\displaystyle \frac{M^i}{\ker
d_M^i} }& \text{ if } j=i,\\ M^j & \text{ if } j>i,\end{cases}
\end{align*}
and the
\emph{brutal truncations} $\beta_{\leq i}$ and $\beta_{\geq i}$ by
\begin{align*}
(\beta_{\leq i}M)^j=\begin{cases} M^j & \text{ if } j\leq i,\\
0 & \text{ if } j>i,\end{cases} &&&~~
(\beta_{\geq i}M)^j=\begin{cases} 0 & \text{ if } j<i,\\
M^j & \text{ if } j\geq i.\end{cases}
\end{align*}
Their respective differentials are inherited from $M$.
Notice that $\sigma^{\leq i}(M)$ and $\beta_{\geq i}(M)$ are
subcomplexes of $M$ and $\sigma^{>i}(M)$ and $\beta_{\leq i-1}(M)$
are the corresponding quotient complexes. Thus we have two sequences,
which are componentwise short exact,
\begin{align*}
0 \ra \sigma^{\leq i}(M)\ra M\ra \sigma^{>i}(M)\ra 0 & \quad  \text{  and } & 0\ra \beta_{\geq i}(M)\ra  M\ra \beta_{\leq i-1}(M)\ra  0.
\end{align*}
 Moreover, taking standard truncations behaves well with respect to cohomology.
\begin{align*}
H^j(\sigma^{\leq i}M)=\begin{cases} H^j(M) & \text{ if } j\leq
i,\\ 0 & \text{ if } j>i,\end{cases} &&&&
H^j(\sigma^{>i}M)=\begin{cases} 0 & \text{ if } j\leq i,\\
H^j(M) & \text{ if } j>i.\end{cases}
\end{align*}

\subsection{DG algebras and their derived
categories}\label{ss:dg-alg}

Let $A$ be a dg $k$-algebra. Consider the \emph{derived category}
$\cd(A)$ of dg $A$-modules, see~\cite{Keller94}. It is a
triangulated category with shift functor being the shift of
complexes $[1]$. It is obtained from the category $\cc(A)$ of dg $A$-modules
by formally inverting all quasi-isomorphisms. Let $\per(A)=\thick(A_A)$. A $k$-algebra $R$ can be
viewed as a dg algebra concentrated in degree $0$. In this case, we
have $\cd(R)=\cd(\Mod-R)$ and $\per(R)=K^b(\proj-R)$. Assume that
$k$ is a field. Consider the full subcategory $\cd_{fd}(A)$ of
$\cd(A)$ consisting of those dg $A$-modules whose total cohomology
is finite-dimensional. 

Let $M$, $N$ be dg $A$-modules. Define the complex $\cHom_A(M,N)$ componentwise as
\begin{eqnarray*}\cHom_A^i(M,N)=\left.\left\{f\in\prod_{j\in\mathbb{Z}}\Hom_k(M^j,N^{i+j}) \, \right|
\, f(ma)=f(m)a\right\},\end{eqnarray*} with
differential given by $d(f)=d_N\circ f-(-1)^i f\circ d_M$ for
$f\in\cHom_A^i(M,N)$. The complex $\cEnd_A(M)=\cHom_A(M,M)$ with the
composition of maps as product is a dg $k$-algebra. We will use the following results from~\cite{Keller94}. Let $A$ and $B$ be dg $k$-algebras.
\begin{itemize}
 \item[--] Every dg $A$-module $M$ has a natural structure of dg $\cEnd_A(M)$-$A$-bimodule.
 \item[--] If $M$ is a dg $A$-$B$-bimodule, then there is an adjoint pair of triangle functors
\[
\begin{xy}
\SelectTips{cm}{}
\xymatrix{\cd(A)\ar@<.7ex>[rr]^{?\lten_A M}&&\cd(B)\ar@<.7ex>[ll]^{\RHom_B(M,?)}.}
\end{xy}
\]
 \item[--] Let $f:A\rightarrow B$ be a quasi-isomorphism of dg algebras. Then the induced triangle functor $?\lten_A B:\cd(A)\rightarrow
\cd(B)$ is an equivalence. A quasi-inverse is given by the restriction $\cd(B)\rightarrow\cd(A)$ along $f$. It can be 
written as $?\lten_B B=\RHom_B(B,?)$ where $B$ is considered as a dg $B$-$A$-bimodule respectively dg $A$-$B$-bimodule via $f$.
These equivalences restrict to equivalences between $\per(A)$ and $\per(B)$ and, if $k$ is a field, between $\cd_{fd}(A)$ and $\cd_{fd}(B)$.
By abuse of language, by a quasi-isomorphism we will also mean a zigzag of quasi-isomorphisms.
\end{itemize}

\subsection{The Nakayama functor}\label{ss:nakayama-functor} We follow \cite[Section 10]{Keller94}.
Let $k$ be a field and let $A$ be a dg $k$-algebra. We
consider the dg functor $\nu=\nu_A=D\cHom_A(?,A)\colon \cc(A)\rightarrow\cc(A)$.  
By
abuse of notation the left derived functor of $\nu$ is also denoted by $\nu$. It is clear that
$\nu(A)=D(A)$ holds, so $\nu$ restricts to a triangle functor
\[ 
\nu\colon\per(A)\rightarrow\thick(D(A)),
\]
which is a triangle equivalence provided that $A$ has finite-dimensional cohomologies in each degree.

Furthermore, for dg $A$-modules $M$ and $N$ there is a binatural map
\begin{align}\label{E:Naka}
\begin{array}{c}
\begin{xy}
\SelectTips{cm}{}
\xymatrix@R=0.5pc{D\cHom_A(M,N)\ar[r] & \cHom_A(N,\nu(M))\\
\varphi\ar@{|->}[r]&\bigl(n\mapsto (f\mapsto \varphi(g))\bigr)}
\end{xy}
\end{array}
\end{align} where
$f\in\cHom_A(M,A)$ and $g\colon m\mapsto nf(m)$.  If we let $M=A$, then $(\ref{E:Naka})$ is an isomorphism and hence a quasi-isomorphism for $M\in\per(A)$. Passing to the derived category in $(\ref{E:Naka})$ yields a binatural isomorphism for $M\in\per(A)$ and $N\in\cd(A)$:
\begin{align}\label{E:AR-formula}
\begin{xy}
\SelectTips{cm}{}
\xymatrix{D\Hom_{\cd(A)}(M,N)\cong\Hom_{\cd(A)}(N,\nu(M))}
\end{xy}\! \! .
\end{align}
This shows that $\nu$ defines a (right) Serre functor on $\per(A)=K^b(\proj-A)$, if $A$
is a finite-dimensional Gorenstein $k$-algebra.

\subsection{Non-positive dg algebras: $t$-structures and co-$t$-structures}\label{ss:nonpositive-dg-alg-1}

Let $\cc$ be a triangulated $k$-category with shift functor $[1]$. A
\emph{$t$-structure} on $\cc$ \cite{BeilinsonBernsteinDeligne82}
is a pair $(\cc^{\leq 0},\cc^{\geq 0})$ of strictly (i.e.~ closed under isomorphisms) full
subcategories such that
\begin{itemize}
\item[--] $\cc^{\leq 0}[1]\subseteq\cc^{\leq 0}$ and
$\cc^{\geq 0}[-1]\subseteq\cc^{\geq 0}$,
\item[--] $\Hom(M,N[-1])=0$ for $M\in\cc^{\leq 0}$
and $N\in\cc^{\geq 0}$,
\item[--] for each $M\in\cc$ there is a triangle $M'\rightarrow
M\rightarrow M''\rightarrow M'[1]$ in $\cc$ with $M'\in\cc^{\leq 0}$
and $M''\in\cc^{\geq 0}[-1]$.
\end{itemize}
The \emph{heart} $\cc^{\leq 0}\cap\cc^{\geq 0}$ of the $t$-structure $(\cc^{\leq 0},\cc^{\geq 0})$ is an abelian category \cite{BeilinsonBernsteinDeligne82}.

Let $A$ be a dg $k$-algebra such that $A^i=0$ for $i>0$. Such a
dg algebra is called a \emph{non-positive dg algebra}. The canonical projection $A\rightarrow H^0(A)$ is a homomorphism of
dg algebras. We view a module over $H^0(A)$ as a dg module over $A$ via this homomorphism.
This defines a natural functor $\Mod-H^0(A)\rightarrow \cd(A)$.

\begin{proposition}\label{p:standard-t-str}
Let $A$ be a non-positive dg algebra.
\begin{itemize}
\item[(a)] \emph{(\cite[Theorem 1.3]{HoshinoKatoMiyachi02}, \cite[Section
2.1]{Amiot09} and \cite[Section 5.1]{KellerYang11})} Let $\cd^{\leq
0}$ respectively $\cd^{\geq 0}$ denote the full subcategory of
$\cd(A)$ which consists of objects $M$ such that $H^i(M)=0$ for
$i>0$ respectively for $i<0$. Then $(\cd^{\leq 0},\cd^{\geq 0})$ is
a $t$-structure on $\cd(A)$. Moreover, taking $H^0$ is an
equivalence from the heart to $\Mod-H^0(A)$, and the natural functor $\Mod-H^0(A)\rightarrow \cd(A)$
induces a quasi-inverse to this equivalence. We will identify $\Mod-H^0(A)$ with the heart via these equivalences.
\item[(b)] Assume that $k$ is a field. The $t$-structure in (a) restricts to a $t$-structure on
$\cd_{fd}(A)$ whose heart is $\fdmod-H^0(A)$.
Moreover, as a triangulated category $\cd_{fd}(A)$ is generated by the heart.
\item[(c)] Assume that $k$ is a field. Assume that $\cd_{fd}(A)\subseteq\per(A)$ and $\per(A)$
is Hom-finite. Then the $t$-structure in (a) restricts to a
$t$-structure on $\per(A)$, whose heart is $\fdmod-H^0(A)$.
\end{itemize}
\end{proposition}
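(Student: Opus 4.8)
The plan is to handle the three parts of the proposition in turn. Part (a) is quoted verbatim from \cite{HoshinoKatoMiyachi02, Amiot09, KellerYang11}, so there is nothing to prove there; the work is to deduce (b) and, with a bit more care, (c) from it. For (b), let $M\in\cd_{fd}(A)$. By definition $\bigoplus_i H^i(M)$ is finite-dimensional, so only finitely many $H^i(M)$ are nonzero and each is finite-dimensional. In the truncation triangle $\sigma^{\leq 0}M\to M\to\sigma^{>0}M\to(\sigma^{\leq 0}M)[1]$ attached to the $t$-structure of (a), the cohomology of $\sigma^{\leq 0}M$ (resp.\ $\sigma^{>0}M$) is $\{H^i(M)\mid i\leq 0\}$ (resp.\ $\{H^i(M)\mid i>0\}$), so both outer terms again lie in $\cd_{fd}(A)$. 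Since $\cd^{\leq 0}$ and $\cd^{\geq 0}$ are stable under the relevant shifts and the Hom-orthogonality is inherited from $\cd(A)$, the pair $(\cd^{\leq 0}\cap\cd_{fd}(A),\,\cd^{\geq 0}\cap\cd_{fd}(A))$ is a $t$-structure on $\cd_{fd}(A)$; its heart is the class of $M\in\cd_{fd}(A)$ with $H^i(M)=0$ for $i\neq 0$, which by (a) is exactly $\fdmod-H^0(A)$.

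To see that $\cd_{fd}(A)$ is generated as a triangulated category by this heart I would argue by d\'evissage: if the nonzero cohomology of $M$ is concentrated in an interval $[a,b]$, induct on $b-a$. If $a=b$, then $M\cong H^a(M)[-a]$ lies in $\thick(\fdmod-H^0(A))$; otherwise, taking $b$ to be the top nonvanishing degree, the triangle $\sigma^{\leq b-1}M\to M\to H^b(M)[-b]\to(\sigma^{\leq b-1}M)[1]$ presents $M$ as an extension of two objects of strictly shorter cohomological spread, both of which lie in $\thick(\fdmod-H^0(A))$ by induction.

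For (c), the key point is that a priori $\per(A)$ need not be closed under truncation, and the two hypotheses are exactly what repairs this. First, every object of $\per(A)=\thick(A_A)$ has cohomology bounded above: $A$ is concentrated in non-positive degrees, any finite iterated cone of shifts of $A$ has cohomology bounded above (induct on the number of cones, using the long exact sequence), and passing to direct summands preserves this. Second, since $\per(A)$ is Hom-finite and $A,M\in\per(A)$, each $H^i(M)=\Hom_{\cd(A)}(A,M[i])$ is finite-dimensional. Now fix $M\in\per(A)$ and the truncation triangle $\sigma^{\leq 0}M\to M\to\sigma^{>0}M\to(\sigma^{\leq 0}M)[1]$. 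The cohomology of $\sigma^{>0}M$ is concentrated in degrees $\geq 1$, bounded above, and finite-dimensional in each degree, hence $\sigma^{>0}M\in\cd_{fd}(A)$, and therefore $\sigma^{>0}M\in\per(A)$ by the hypothesis $\cd_{fd}(A)\subseteq\per(A)$. Rotating the triangle exhibits $\sigma^{\leq 0}M$ as a shift of the cone of the morphism $M\to\sigma^{>0}M$ in $\per(A)$, so $\sigma^{\leq 0}M\in\per(A)$ as well. Thus both truncations of every $M\in\per(A)$ stay in $\per(A)$; the shift conditions and orthogonality are inherited from $\cd(A)$, so $(\cd^{\leq 0}\cap\per(A),\,\cd^{\geq 0}\cap\per(A))$ is a $t$-structure on $\per(A)$. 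Its heart consists of the $M\in\per(A)$ with cohomology in degree $0$ only; by (a) such an $M$ is identified with $H^0(M)\in\Mod-H^0(A)$, which is finite-dimensional by the second point, so the heart is contained in $\fdmod-H^0(A)$; conversely $\fdmod-H^0(A)\subseteq\cd_{fd}(A)\subseteq\per(A)$ and all its objects have cohomology in degree $0$, so the heart equals $\fdmod-H^0(A)$.

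The main obstacle is the one just isolated: showing that truncating a perfect dg module keeps it perfect. Boundedness above of the cohomology is structural, finite-dimensionality in each degree comes from Hom-finiteness, and together they push the positive truncation $\sigma^{>0}M$ into $\cd_{fd}(A)$; the inclusion $\cd_{fd}(A)\subseteq\per(A)$ then pulls it back, and the rotation (octahedral) argument carries the remaining truncation along for free. Everything else — the shift and orthogonality axioms, and the identification of the hearts with $\fdmod-H^0(A)$ — is formal inheritance from $\cd(A)$ together with part (a).
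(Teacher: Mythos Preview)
Your proof is correct and follows essentially the same route as the paper. For (b) the paper's d\'evissage peels off the bottom cohomology $\sigma^{\leq n}M$ while you peel off the top $H^b(M)[-b]$, an immaterial difference; for (c) the paper simply cites \cite[Proposition~2.7]{Amiot09}, and the argument you supply---bounded-above cohomology of perfect objects, Hom-finiteness forcing each $H^i(M)$ to be finite-dimensional, hence $\sigma^{>0}M\in\cd_{fd}(A)\subseteq\per(A)$---is precisely the content of that reference.
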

\begin{proof} (a) We only give the key point. Let $M$ be a dg
$A$-module. Thanks to the assumption that $A$ is non-positive, the
standard truncations $\sigma^{\leq 0}M$ and $\sigma^{>0}M$ are again
dg $A$-modules. A dg $A$-module $M$ whose cohomologies are
concentrated in degree $0$ is related to the  $H^0(A)$-module
$H^0(M)$ via the following chain of quasi-isomorphisms 
 $
\begin{tikzpicture}[description/.style={fill=white,inner sep=0pt}]
    \matrix (n) [matrix of math nodes, row sep=0em,
                 column sep=2 em, text height=12pt, text depth=0ex,
                 inner sep=0pt, nodes={inner xsep=0em, inner
ysep=0em}]
    {  
       H^0(M)&\sigma^{\leq 0}M &M. \\
    };
 \draw[->>] ($(n-1-2.west) + (-0.6mm,-0.8mm)$) -- ($(n-1-1.east) + (0mm,-0.8mm)$);
 \draw[<-right hook] ($(n-1-3.west) + (-0.6mm,-0.8mm)$) -- ($(n-1-2.east) + (0mm,-0.8mm)$);    
\end{tikzpicture}    
$
Moreover, we have a distinguished triangle
\begin{eqnarray}\label{e:triangle-standard-truncation} \sigma^{\leq
0}M\longrightarrow M\longrightarrow \sigma^{>0}M\longrightarrow \sigma^{\leq
0}M[1]\end{eqnarray} in $\mathcal{D}(A)$. This is the triangle required to show that
$(\cd^{\leq 0},\cd^{\geq 0})$ is a $t$-structure.

(b) For the first statement, it suffices to show that, under the
assumptions, the standard truncations are endo-functors of
$\cd_{fd}(A)$. This is true because $H^*(\sigma^{\leq 0}M)$ and
$H^*(\sigma^{>0}M)$ are subspaces of $H^*(M)$.

To show the second statement, let $M\in\cd_{fd}(M)$. Suppose that
for $m\geq n$ we have $H^{n}(M)\neq 0$, $H^{m}(M)\neq 0$ but
$H^i(M)=0$ for $i>m$ or $i<n$. We prove that $M$ is generated by the
heart by induction on $m-n$. If $m-n=0$, then a shift of $M$ is in the heart.
Now suppose $m-n>0$. The standard truncations yield a triangle
\[
\sigma^{\leq n}M \longrightarrow M \longrightarrow \sigma^{>n}M \longrightarrow \sigma^{\leq n}M[1].
\]
Now the cohomologies of $\sigma^{\leq n}M$ are concentrated in degree
$n$, and hence $\sigma^{\leq n}M$ belongs to a shifted copy of the heart.
Moreover, the cohomologies of $\sigma^{>n}(M)$ are bounded between
degrees $n+1$ and $m$. By induction hypothesis $\sigma^{>n}(M)$ is
generated by the heart. Therefore $M$ is generated by the heart.

(c) Same as the proof for~\cite[Proposition 2.7]{Amiot09}.
\end{proof}

Let $\cc$ be as above. A
\emph{co-$t$-structure} on $\cc$~\cite{Pauksztello08}  (or
\emph{weight structure}~\cite{Bondarko10}) is a pair $(\cc_{\geq
0},\cc_{\leq 0})$ of strictly full subcategories of $\cc$ satisfying the following conditions
\begin{itemize}
\item[(C1)] both $\cc_{\geq 0}$ and $\cc_{\leq
0}$ are closed under finite direct sums and direct summands,
\item[(C2)] $\cc_{\geq 0}[-1]\subseteq\cc_{\geq 0}$ and
$\cc_{\leq 0}[1]\subseteq\cc_{\leq 0}$,
\item[(C3)] $\Hom(M,N[1])=0$ for $M\in\cc_{\geq 0}$
and $N\in\cc_{\leq 0}$,
\item[(C4)] for each $M\in\cc$ there is a triangle $M'\rightarrow
M\rightarrow M''\rightarrow M'[1]$ in $\cc$ with $M'\in\cc_{\geq 0}$
and $M''\in\cc_{\leq 0}[1]$.
\end{itemize}
It follows that $\cc_{\leq 0}=\cc_{\geq 0}^\perp[-1]$. The
\emph{co-heart} is defined as the intersection $\cc_{\geq
0}~\cap~\cc_{\leq 0}$.

\begin{lemma}\label{l:stupid-truncation}
 \emph{(\cite[Proposition 1.3.3.6]{Bondarko10})} For $M\in\cc_{\leq 0}$, there exists a distinguished triangle
$M' \ra M\ra M''\ra M'[1]$ with $M'\in\cc_{\geq 0}\cap\cc_{\leq 0}$ and $M''\in\cc_{\leq
0}[1]$.
\end{lemma}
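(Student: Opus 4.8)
The plan is to extract the triangle essentially for free from the fourth axiom of a co-$t$-structure and then improve the membership of its first term. Applying the decomposition axiom to $M$ itself produces a distinguished triangle $M'\ra M\ra M''\ra M'[1]$ with $M'\in\cc_{\geq 0}$ and $M''\in\cc_{\leq 0}[1]$. This already has exactly the shape asserted in the lemma, and $M''\in\cc_{\leq 0}[1]$ is what we want; so the entire content of the proof reduces to checking that $M'$ also lies in $\cc_{\leq 0}$, hence in the co-heart $\cc_{\geq 0}\cap\cc_{\leq 0}$.

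The key point, and the only thing requiring an argument, is that $\cc_{\leq 0}$ is closed under extensions. This follows from the reformulation $\cc_{\leq 0}=\cc_{\geq 0}^\perp[-1]$ recorded just before the lemma: equivalently $\cc_{\leq 0}[1]=\cc_{\geq 0}^\perp=\{X\in\cc\mid\Hom(N,X)=0\text{ for all }N\in\cc_{\geq 0}\}$. A right-orthogonal subcategory of this form is automatically closed under extensions, since for a triangle $A\ra B\ra C\ra A[1]$ with $A,C\in\cc_{\geq 0}^\perp$ and any $N\in\cc_{\geq 0}$, the exact sequence $\Hom(N,A)\ra\Hom(N,B)\ra\Hom(N,C)$ has vanishing outer terms, forcing $\Hom(N,B)=0$. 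Shifting by $[-1]$ shows $\cc_{\leq 0}$ is closed under extensions.

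With this in hand I would rotate the triangle above to $M''[-1]\ra M'\ra M\ra M''$. Here $M''[-1]\in\cc_{\leq 0}$ because $M''\in\cc_{\leq 0}[1]$, and $M\in\cc_{\leq 0}$ by hypothesis, so the extension-closedness just established gives $M'\in\cc_{\leq 0}$. Combined with $M'\in\cc_{\geq 0}$ coming from the decomposition axiom, we get $M'\in\cc_{\geq 0}\cap\cc_{\leq 0}$, while $M''\in\cc_{\leq 0}[1]$ is already known; this is precisely the desired triangle.

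I do not expect a genuine obstacle here: the argument is short and formal. The one place where care is needed is the extension-closedness of $\cc_{\leq 0}$, which must be deduced from the orthogonality description $\cc_{\leq 0}=\cc_{\geq 0}^\perp[-1]$ rather than from the axioms taken naively; everything else is just a rotation of the triangle supplied by the co-$t$-structure.
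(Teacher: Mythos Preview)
Your argument is correct. The paper does not supply its own proof of this lemma; it simply cites \cite[Proposition~1.3.3.6]{Bondarko10}. Your approach---apply the decomposition axiom to $M$ and then upgrade $M'\in\cc_{\geq 0}$ to $M'\in\cc_{\geq 0}\cap\cc_{\leq 0}$ using that $\cc_{\leq 0}=\cc_{\geq 0}^\perp[-1]$ is extension-closed---is exactly the standard argument and matches what Bondarko does.
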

\begin{proof}
For convenience, we include the proof. Take a triangle as in (C4). Since $\cc_{\leq 0}=\cc_{\geq 0}^{\perp}[-1]$ is extension closed, $M'$ is in $\cc_{\leq 0}$. Indeed $M'$ is an extension of $M''[-1]$ and $M$.
\end{proof}

\noindent Let $A$ be a non-positive dg $k$-algebra. Let $\cp_{\geq 0}$
respectively $\cp_{\leq 0}$ denote the smallest full subcategory of
$\per(A)$ which contains $A[i]$ for $i\leq 0$ respectively
$i\geq 0$ and is closed under taking extensions and direct
summands. We need a result from \cite{Bondarko10}.

\begin{proposition}\label{p:standard-co-t-str}
$(\cp_{\geq 0},\cp_{\leq 0})$ is a co-$t$-structure of $\per(A)$, with co-heart $\add(A)$.
\end{proposition}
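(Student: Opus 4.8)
The statement to prove is Proposition~\ref{p:standard-co-t-str}: that $(\cp_{\geq 0},\cp_{\leq 0})$ is a co-$t$-structure on $\per(A)$ with co-heart $\add(A)$, for $A$ a non-positive dg algebra. The overall strategy is to verify the four axioms of a co-$t$-structure directly, using the ``brutal truncation'' of a perfect dg module (realized as a finite extension of shifted copies of $A$) as the replacement for the distinguished triangle axiom, in complete analogy with how stupid/brutal truncations of complexes of projectives furnish the weight structure on $K^b(\proj)$.

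\textbf{Key steps.} First I would record what $\cp_{\geq 0}$ and $\cp_{\leq 0}$ are concretely: since $A$ is non-positive, $\per(A)=\thick(A_A)$ and every object of $\per(A)$ is a direct summand of a ``finite cell module'', i.e.\ an iterated extension of a finite collection of shifts $A[i]$. By definition $\cp_{\geq 0}$ is built from $A[i]$ with $i\le 0$ and $\cp_{\leq 0}$ from $A[i]$ with $i\ge 0$, each closed under extensions and summands. The first and second axioms are then immediate: closure under finite direct sums and summands is built into the definition, and $\cp_{\geq 0}[-1]\subseteq\cp_{\geq 0}$ (resp.\ $\cp_{\leq 0}[1]\subseteq\cp_{\leq 0}$) holds because shifting the generators $A[i]$, $i\le 0$, by $[-1]$ lands among generators $A[i]$, $i\le -1\le 0$, and extensions/summands are preserved by the shift functor.

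For the orthogonality axiom, $\Hom_{\per(A)}(M,N[1])=0$ for $M\in\cp_{\geq 0}$ and $N\in\cp_{\leq 0}$, I would reduce by the two-out-of-three property of $\Hom$-vanishing in triangles (and closure under summands) to the generating case $M=A[i]$ with $i\le 0$ and $N=A[j]$ with $j\ge 0$. Then $\Hom_{\per(A)}(A[i],A[j][1])=H^{i-j-1}(A)$ up to reindexing, and since $i-j-1\le -1<0$ while $A$ is non-positive, wait --- let me recompute: $\Hom_{\cd(A)}(A,A[n])=H^n(A)$, so $\Hom(A[i],A[j+1])=H^{j+1-i}(A)$ with $j+1-i\ge 1>0$, hence zero by non-positivity of $A$. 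This is the clean computational heart of the argument. For the truncation axiom, given $M\in\per(A)$ write it (up to summands, which the axioms tolerate) as a finite filtration with subquotients $A[i]$; collecting the steps with $i\le 0$ into $M'$ and those with $i\ge 1$ into $M''[-1]$ via the octahedral axiom produces a triangle $M'\to M\to M''\to M'[1]$ with $M'\in\cp_{\geq 0}$ and $M''\in\cp_{\leq 0}[1]$; making this filtration-splitting precise is the step requiring the most care, and I expect it to be \emph{the main obstacle} --- one must argue that a perfect dg module admits such a two-step decomposition compatibly with the grading, which is where citing \cite{Bondarko10} (the existence of weight structures on categories of the form $\thick$ of a suitable object) does the real work.

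\textbf{Co-heart.} Finally, to identify the co-heart $\cp_{\geq 0}\cap\cp_{\leq 0}$ with $\add(A)$: the inclusion $\add(A)\subseteq\cp_{\geq 0}\cap\cp_{\leq 0}$ is clear since $A=A[0]$ is a generator of both. For the reverse inclusion, an object $X$ in the co-heart satisfies $\Hom(X,X[n])=0$ for $n\neq 0$ by combining the orthogonality axiom with its shifts (this is a general fact: co-heart objects have no negative \emph{or} positive self-extensions in the appropriate sense), and more usefully $\Hom(X,Y[n])$ vanishes for $n>0$ when $Y\in\cp_{\leq 0}$ and for $n<0$ when $Y\in\cp_{\geq 0}$; taking $Y=A$ one gets $H^n\RHom_A(X,A)=0$ for $n\neq 0$, so $\RHom_A(X,A)$ is concentrated in degree $0$, and a standard argument (for instance via Lemma~\ref{l:stupid-truncation} applied to $X$ itself, peeling off the co-heart part) forces $X$ to be a direct summand of a finite direct sum of copies of $A$, i.e.\ $X\in\add(A)$. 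I would phrase this last part so as to lean on Lemma~\ref{l:stupid-truncation} rather than reproving it, keeping the proof short.
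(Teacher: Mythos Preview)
The paper's own proof is a one-line citation: ``This follows from~\cite[Proposition 5.2.2, Proposition 6.2.1]{Bondarko10}.'' Your plan is essentially an expanded gloss on what those results say, and for axioms (1)--(3) your verification is correct and adds nothing wrong. You also correctly identify that the decomposition axiom is where the real content lies and that ``up to summands'' is not something the axioms tolerate automatically --- passing from iterated extensions of $A[i]$ to arbitrary summands thereof is precisely the step that Bondarko's machinery handles, so your instinct to defer to the citation there is right.

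Your co-heart argument, however, contains a genuine error. You claim that for $X$ in the co-heart, $\RHom_A(X,A)$ is concentrated in degree $0$. This is false: take $X=A$ itself, which certainly lies in $\cp_{\geq 0}\cap\cp_{\leq 0}$; then $\RHom_A(A,A)\cong A$ has cohomology in all non-positive degrees. What orthogonality actually gives you is only $\Hom(X,A[n])=0$ for $n>0$ (since $X\in\cp_{\geq 0}$ and $A[n-1]\in\cp_{\leq 0}$ when $n\geq 1$); there is no vanishing for $n<0$, and none should be expected. Your fallback to Lemma~\ref{l:stupid-truncation} is also circular as stated, since that lemma produces $M'$ in the co-heart, not in $\add(A)$, and the co-heart is exactly what you are trying to identify. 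In Bondarko's construction the co-heart comes out as $\add(A)$ essentially by design: one \emph{starts} from the negative subcategory $\add(A)$ and builds $\cp_{\geq 0}$, $\cp_{\leq 0}$ from it, so the identification of the co-heart is part of the cited result, not a separate verification. If you want an independent argument, the cleanest is: for $X$ in the co-heart one has $H^n(X)=\Hom(A,X[n])=0$ for $n>0$, and then a weight-filtration argument (peeling off an $\add(A)$-cover of $X$ using the approximation property of the co-$t$-structure, and observing that the cone lies in $\cp_{\leq 0}[1]\cap\cp_{\geq 0}=\cp_{\leq -1}\cap\cp_{\geq 0}$, which one shows is zero by induction on the length of a cell filtration) --- but this is again Bondarko's argument in disguise.
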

\begin{proof}
 This follows from~\cite[Proposition 5.2.2, Proposition 6.2.1]{Bondarko10}, see also \cite[Proposition 3.4]{IyamaYang14} for an elementary proof (note that $A$ is a silting object in $\per(A)$).
\end{proof}

\noindent Hence, objects in $\cp_{\leq 0}$ are characterised by
the vanishing of the positive cohomologies:
\begin{corollary}\label{c:description-of-coaisle-by-cohomologies}
 $\cp_{\leq 0}=\{M\in\per(A)\, | \, H^i(M)=0\text{ for any } i>0\}$.
\end{corollary}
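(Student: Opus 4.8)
The plan is to read the description off the co-$t$-structure $(\cp_{\geq 0},\cp_{\leq 0})$ of Proposition~\ref{p:standard-co-t-str}, using the general identity $\cp_{\leq 0}=\cp_{\geq 0}^{\perp}[-1]$ recorded right after the definition of a co-$t$-structure, and then to reduce the resulting orthogonality condition to the generating objects $A[i]$ with $i\le 0$.

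First I would unwind the identity: an object $M\in\per(A)$ lies in $\cp_{\leq 0}$ if and only if $M[1]\in\cp_{\geq 0}^{\perp}$, that is, $\Hom_{\per(A)}(N,M[1])=0$ for every $N\in\cp_{\geq 0}$. The next step is the observation that, for fixed $M$, the full subcategory $\{N\in\per(A)\mid \Hom_{\per(A)}(N,M[1])=0\}$ is closed under extensions and under direct summands, since $\Hom_{\per(A)}(-,M[1])$ is an additive cohomological functor (a triangle in the first variable yields a long exact sequence, and $\Hom$ sends finite direct sums in the first variable to direct sums). Because $\cp_{\geq 0}$ is by definition the smallest full subcategory of $\per(A)$ that contains all $A[i]$ with $i\le 0$ and is closed under extensions and direct summands, it follows that $M\in\cp_{\leq 0}$ if and only if $\Hom_{\per(A)}(A[i],M[1])=0$ for all $i\le 0$.

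Finally I would compute these Hom-spaces, using that $\per(A)$ is a full triangulated subcategory of $\cd(A)$ and that $\Hom_{\cd(A)}(A,X)=H^0(X)$ for every dg $A$-module $X$:
\[
\Hom_{\per(A)}(A[i],M[1]) \;=\; \Hom_{\cd(A)}(A,M[1-i]) \;=\; H^{1-i}(M).
\]
As $i$ ranges over the integers $\le 0$, the exponent $1-i$ ranges over the integers $\ge 1$, so the simultaneous vanishing of all these groups is exactly the condition $H^{j}(M)=0$ for all $j>0$. This yields the claimed equality $\cp_{\leq 0}=\{M\in\per(A)\mid H^i(M)=0 \text{ for } i>0\}$.

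I do not expect a genuine obstacle here: the argument is essentially formal once Proposition~\ref{p:standard-co-t-str} is available. The only point needing a (routine) argument rather than a pure manipulation of definitions is the passage from the subcategory $\cp_{\geq 0}$ to its generating set $\{A[i]\mid i\le 0\}$, which rests precisely on the fact that the class of objects annihilated by an additive cohomological functor is closed under extensions and direct summands.
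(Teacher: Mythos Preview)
Your proof is correct and follows essentially the same route as the paper's: both use the identity $\cp_{\leq 0}=\cp_{\geq 0}^{\perp}[-1]$ from the co-$t$-structure, reduce the orthogonality condition from all of $\cp_{\geq 0}$ to the generating objects $A[i]$ with $i\le 0$ (the paper calls this step ``d\'evissage''), and then identify the resulting Hom-vanishing with the cohomological condition via $\Hom(A[i],M)\cong H^{-i}(M)$. Your write-up is simply more explicit about why the reduction to generators is legitimate.
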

\begin{proof} Let $\cs$ be the category on the right.
 By the preceding proposition, $\cp_{\leq 0}=\cp_{\geq 0}^{\perp}[-1]=(\cp_{\geq 0}[-1])^\perp$.
In particular, for $M\in\cp_{\leq 0}$ and $i<0$ this implies that 
$\Hom(A[i],M)=0$. Hence, $H^i(M)=0$ holds for any
$i>0$ and $M$ is in $\cs$. Conversely, if $H^i(M)=\Hom(A[-i],M)=0$
for any $i>0$, then it follows by d\'evissage that $\Hom(N,M)=0$ for any $N\in\cp_{\geq
0}[-1]$. This shows that $M$ is contained in $\cp_{\leq 0}$.
\end{proof}

\subsection{Non-positive dg algebras: Hom-finiteness}\label{ss:nonpositive-dg-alg-2}

Let $A$ be a dg algebra. Then the subcomplex $\sigma^{\leq 0}A$ inherits a dg algebra structure from $A$.
Therefore if $H^i(A)=0$ for any $i>0$, the embedding $\sigma^{\leq 0}A\hookrightarrow A$ is a quasi-isomorphism of dg algebras.

We generalise~\cite[Lemma 2.5 \& Prop. 2.4]{Amiot09}
and~\cite[Lemma 2.4 \& Prop. 2.5]{Guolingyan11a}.

\begin{proposition}\label{p:hom-finiteness-of-per}
 Let $k$ be a field and $A$ be a dg $k$-algebra such that
\begin{itemize}
 \item[--] $A^i=0$ for any $i>0$,
 \item[--] $H^0(A)$ is finite-dimensional,
 \item[--] $\cd_{fd}(A)\subseteq\per(A)$.
\end{itemize}
Then $H^i(A)$ is finite-dimensional for any $i$. Consequently,
$\per(A)$ is Hom-finite.
\end{proposition}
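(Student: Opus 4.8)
The plan is to prove first that all cohomologies $H^i(A)$ are finite-dimensional, and then to deduce $\Hom$-finiteness of $\per(A)$ by a standard dévissage argument. For the first part, I would argue by descending induction. Since $A$ is non-positive, $H^i(A) = 0$ for $i > 0$, and by hypothesis $H^0(A)$ is finite-dimensional, so the induction is anchored. Suppose inductively that $H^j(A)$ is finite-dimensional for all $j > i$ (for some $i < 0$), and consider the brutal truncation triangle
\begin{equation*}
\beta_{\geq i+1}(A) \longrightarrow A \longrightarrow \beta_{\leq i}(A) \longrightarrow \beta_{\geq i+1}(A)[1]
\end{equation*}
in $\cd(A)$. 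The key point is that $\beta_{\geq i+1}(A)$ has finite-dimensional total cohomology: its cohomology in degrees $> i$ agrees with that of $A$ up to the single degree $i+1$ where a kernel must be controlled — more precisely one uses the standard truncation $\sigma^{\leq i+1}(\beta_{\geq i+1}A)$, whose cohomology is bounded and, by the inductive hypothesis together with the fact that $A^{i+1}$ itself need not be finite-dimensional, requires a small extra observation. The cleaner route is to work with the complex $\sigma^{> i}(A)$, whose cohomologies $H^{i+1}(A), \ldots, H^0(A)$ are all finite-dimensional by the inductive hypothesis (and Proposition~\ref{p:standard-t-str}(b) identifies such objects as lying in $\cd_{fd}(A)$), hence $\sigma^{> i}(A) \in \cd_{fd}(A)$.

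Now I invoke the third hypothesis: $\cd_{fd}(A) \subseteq \per(A)$, so $\sigma^{> i}(A) \in \per(A)$. Since $A \in \per(A)$ as well, the truncation triangle $\sigma^{\leq i}(A) \to A \to \sigma^{> i}(A) \to \sigma^{\leq i}(A)[1]$ shows $\sigma^{\leq i}(A) \in \per(A)$. By Corollary~\ref{c:description-of-coaisle-by-cohomologies}, since $H^j(\sigma^{\leq i}A) = 0$ for $j > 0$ (indeed for $j > i$), we have $\sigma^{\leq i}(A) \in \cp_{\leq 0}$. Applying Lemma~\ref{l:stupid-truncation} to $\sigma^{\leq i}(A) \in \cp_{\leq 0}$, using the co-$t$-structure of Proposition~\ref{p:standard-co-t-str} with co-heart $\add(A)$, yields a triangle $P \to \sigma^{\leq i}(A) \to N \to P[1]$ with $P \in \add(A)$ and $N \in \cp_{\leq 0}[1]$; comparing cohomology in degree $0$ and using $H^0(\sigma^{\leq i}A) = 0$ (as $i < 0$) together with $H^0$ of the degree-$0$ term, one extracts that $H^i(A) \cong H^i(\sigma^{\leq i}A)$ is a subquotient of $H^0$ of objects in $\add(A)$, hence finite-dimensional. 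I expect the main obstacle to be bookkeeping the precise truncation degrees so that the co-$t$-structure argument pins down exactly $H^i(A)$ rather than a nearby cohomology; one may need to iterate Lemma~\ref{l:stupid-truncation} or shift indices carefully, and possibly to first reduce to $\sigma^{\leq 0}A$ via the quasi-isomorphism noted at the start of Subsection~\ref{ss:nonpositive-dg-alg-2}.

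For the consequence, once $H^i(A)$ is finite-dimensional for every $i$, I would show $\per(A)$ is $\Hom$-finite as follows. The graded algebra $H^*(A)$ is bounded above (degrees $\leq 0$) with finite-dimensional components; this does not immediately bound it below, but for objects of $\per(A)$, which are built from finitely many shifted copies of $A$ by finitely many cones and summands, the relevant $\Hom$ computations reduce to $\Hom_{\cd(A)}(A[m], A[n]) = H^{n-m}(A)$, which is finite-dimensional. More precisely, $\per(A) = \thick(A)$, and the full subcategory of $\cd(A)$ on objects $X$ with $\Hom_{\cd(A)}(A[m], X)$ finite-dimensional for all $m$ is closed under shifts, cones, and summands, hence contains $\thick(A) = \per(A)$; symmetrically, fixing $Y \in \per(A)$, the subcategory of $X$ with $\Hom(X, Y)$ finite-dimensional is thick and contains $A$, hence all of $\per(A)$. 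This gives $\Hom_{\per(A)}(X, Y)$ finite-dimensional for all $X, Y \in \per(A)$, completing the proof.
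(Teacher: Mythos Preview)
Your approach is essentially the paper's: standard truncation to place $\sigma^{>i}A$ in $\cd_{fd}(A)\subseteq\per(A)$, hence $\sigma^{\leq i}A\in\per(A)$, then use the co-$t$-structure on $\per(A)$ to bound the top cohomology. The d\'evissage argument for Hom-finiteness at the end is also correct.

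There is one genuine execution gap, which you yourself half-anticipate. Applying Lemma~\ref{l:stupid-truncation} to $\sigma^{\leq i}A\in\cp_{\leq 0}$ directly does \emph{not} let you conclude anything about $H^i(A)$: in the resulting triangle $P\to \sigma^{\leq i}A\to N\to P[1]$ with $P\in\add(A)$ and $N\in\cp_{\leq 0}[1]$, the cohomology you can control is $H^0$, but $H^0(\sigma^{\leq i}A)=0$ for $i<0$, so the sequence is vacuous there; and if you instead look at $H^i$, then $H^i(P)$ is a sum of copies of $H^i(A)$ (unknown!) and $H^i(N)$ is uncontrolled. The fix --- and this is exactly what the paper does --- is to shift first: set $M=(\sigma^{\leq i}A)[i]$, which still lies in $\cp_{\leq 0}$ by Corollary~\ref{c:description-of-coaisle-by-cohomologies} since its cohomology vanishes in positive degrees, and now $H^0(M)=H^i(A)$. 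Applying Lemma~\ref{l:stupid-truncation} to $M$ gives $M'\to M\to M''\to M'[1]$ with $M'\in\add(A)$ and $M''\in\cp_{\leq 0}[1]$; then $H^0(M'')=0$, so $H^0(M')\twoheadrightarrow H^i(A)$, and $H^0(M')$ is finite-dimensional because $H^0(A)$ is. This is the ``shift indices carefully'' you allude to; once you make it explicit, the proof is complete and matches the paper's.
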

\begin{proof}
 It suffices to prove the following induction step: 
 if $H^i(A)$ is finite-dimensional for $-n\leq i\leq 0$, then
$H^{-n-1}(A)$ is finite-dimensional.

To prove this claim, we consider the triangle induced by the standard truncations 
\[
\sigma^{\leq -n-1}A \longrightarrow A\longrightarrow \sigma^{> -n-1}A\longrightarrow (\sigma^{\leq -n-1}A)[1].
\]
Since $H^i(\sigma^{> -n-1}A)=H^i(A)$ for $i\geq -n$, it follows by
the induction hypothesis that $\sigma^{> -n-1}A$ belongs to
$\cd_{fd}(A)$, and hence to $\per(A)$ by the third assumption on
$A$. Therefore $\sigma^{\leq -n-1}A\in\per(A)$. By
Corollary~\ref{c:description-of-coaisle-by-cohomologies},
$(\sigma^{\leq -n-1}A)[-n-1]\in \cp_{\leq 0}$. Moreover,
Lemma~\ref{l:stupid-truncation} and
Proposition~\ref{p:standard-co-t-str} imply that there is a triangle
\[
M' \longrightarrow (\sigma^{\leq -n-1}A)[-n-1] \longrightarrow M'' \longrightarrow M'[1]
\]
with $M'\in\add(A)$ and $M''\in\cp_{\leq 0}[1]$. It follows from
Corollary~\ref{c:description-of-coaisle-by-cohomologies} that
$H^0(M'')=0$. Therefore applying $H^0$ to the above triangle, we
obtain an exact sequence
\[
H^0(M')\longrightarrow  H^0((\sigma^{\leq -n-1}A)[-n-1])=H^{-n-1}(A) \longrightarrow 0.
\]
But $H^0(M')$ is finite-dimensional because $M'\in\add(A)$ holds and
$H^0(A)$ has finite dimension by assumption. Thus $H^{-n-1}(A)$ is finite-dimensional.
\end{proof}

\subsection{Complete path algebras and minimal relations}\label{ss:minimal-relation}

Let $k$ be a field and $Q$ be a finite quiver. Denote by $\widehat{kQ}$ the \emph{complete path algebra} of $Q$, i.e.~
the completion of the path algebra $kQ$ with respect to the $\mathfrak{m}$-adic topology, where $\mathfrak{m}$ is the ideal
of $kQ$ generated by all arrows. Namely, $\widehat{kQ}$ is the inverse limit in the category of algebras of the inverse system $\{kQ/\mathfrak{m}^n,
\pi_n:kQ/\mathfrak{m}^{n+1}\rightarrow kQ/\mathfrak{m}^n\}_{n\in\mathbb{N}}$, where $\pi_n$ is the canonical projection.
Later we will also work with completions $\widehat{kQ}$ of path algebras of graded quivers $Q$: they are defined as above with the inverse limit taken in the category of graded algebras. In particular, the degree $n$ component of $\widehat{kQ}$ consists of elements of the form $\sum_p \lambda_p p$ where the sum is taken over all paths of degree $n$ and $\lambda_p \in k$.

The complete path algebra $\widehat{kQ}$ has a natural topology, the $J$-adic topology for $J$ the ideal
generated by all arrows. Let $I$ be a closed ideal of $\widehat{kQ}$ contained in $J^2$ and let $A=\widehat{kQ}/I$. For a vertex $i$ of $Q$, let $e_i$ denote the trivial path at $i$. A \emph{set of minimal relations} of $A$ (or of $I$) is a finite subset $R$ of $\bigcup_{i,j\in Q_0}e_iIe_j$ such that  $I$ coincides with the closure $\overline{(R)}$ of the ideal of $\widehat{kQ}$ generated by $R$  but not with $\overline{(R')}$ for any proper subset $R'$ of $R$.

\newcommand{\rad}{\opname{rad}}
\subsection{Koszul duality}\label{ss:dual-bar-construction} In this subsection we recall the dual bar
construction of an $A_\infty$-algebra and discuss  Koszul duality for locally finite non-positive dg algebras. Our main references are~\cite{Keller06c,Lefevre03,LuPalmieriWuZhang04,LuPalmieriWuZhang08}. Our main aim is to establish Theorem~\ref{t:koszul-duality-for-non-positive-dg-alg}, which plays an essential role in the proof of Theorem~\ref{t:main-thm-2}. 

\medskip

We start with the definition of $A_\infty$-algebras.
Assume that $k$ is a field. Let $K$ be a direct product of finitely many copies of $k$ and consider it as a $k$-algebra via the diagonal embedding.

An \emph{$A_\infty$-algebra} $A$ over $K$ is a graded $K$-bimodule endowed with a family of homogenous $K$-bilinear maps of degree
$1$ (called \emph{multiplications}) $\{b_n:(A[1])^{\ten_K
n}\rightarrow A[1]|n\geq 1\}$ satisfying the following identities
\begin{eqnarray}\label{e:multiplications}
\sum_{i+j+l=n}b_{i+1+l}(id^{\ten i}\ten b_j\ten id^{\ten l})=0,
\qquad n\geq 1.
\end{eqnarray}
Here we take a non-commutative viewpoint: for an $A_\infty$-algebra over $K$ the left and right graded $K$-module structures may be different, while for an $A_\infty$-$K$-algebra in the usual sense (see \eg \cite[Section 2.1]{Lunts10}) the left and right graded $K$-module structures coincide.  Let $A$ be an $A_\infty$-algebra over $K$. $A$ is said to be \emph{strictly unital} if there is a $K$-bilinear map $\eta: K\to A$ (called the \emph{unit} of $A$) which is homogeneous of degree $0$ such that $b_n(id\ten\cdots\ten id\ten \eta\ten id\ten \cdots\ten id)=0$ for $i\neq 2$ and $b_2(id_A\ten\eta)=b_2(\eta\ten id_A)=id_A$.
Note that the identity \eqref{e:multiplications} for $n=1$ is $b_1^2=0$, thus $A$ is a complex of $K$-bimodules  with differential $b_1$. $A$ is said to be \emph{minimal} if $b_1=0$. A graded $K$-sub-bimodule $B$ of $A$ is called an \emph{$A_\infty$-subalgebra} if for all $n\geq 1$ and for all $a_1,\ldots,a_n\in B$ we have $b_n(a_1\ten\cdots\ten a_n)\in B$. A dg algebra $A$ over $K$ is a strictly unital $A_\infty$-algebra over $K$ with vanishing $b_n$ for $n\geq 3$. The differential of $A$ is $d=-b_1$ and the multiplication is $a_1a_2=(-1)^{|a_1|}b_2(a_1\ten a_2)$ for elements $a_1,a_2$ of $A$ such that $a_1$ is homogeneous of degree $|a_1|$. We emphasize that the notion of dg algebra over $K$ and the notion of dg $K$-algebra (which is $A_\infty$-$K$-algebra with vanishing $b_n$ for $n\geq 3$) are different. 
We remark that shifting the $b_n$'s properly yields another family of maps $\{m_n:A^{\ten_K n}\rightarrow A|n\geq 1\}$, which contains the same information as the family $\{b_n|n\geq 1\}$ and is often used to define $A_\infty$-algebras in the literature.

Let $A$ and $B$ be two strictly unital $A_\infty$-algebras  over $K$. A \emph{morphism} $f:A\rightarrow B$ of strictly unital $A_\infty$-algebras is a family $\{f_n:(A[1])^{\ten_K n}\rightarrow B[1]\}$ of homogeneous $K$-bilinear maps of degree $0$  such that $f_1\eta_A[1]=\eta_B[1]$, $f_n(id\ten\cdots \ten id\ten\eta_A[1]\ten id\ten \cdots\ten id)=0$ for all $n\geq 2$, and that
\begin{eqnarray}\label{e:morphisms}
\sum_{i+j+l=n}f_{i+1+l}(id^{\ten i}\ten b_j\ten id^{\ten l})=\sum_{i_1+\ldots+i_s=n}b_s(f_{i_1}\ten \cdots\ten f_{i_s}),
\qquad n\geq 1.
\end{eqnarray}
It follows that $f_1$ is a chain map with respect to the differentials $b_1$. If $f_1$ is a quasi-isomorphism of complexes, we say that $f$ is an \emph{$A_\infty$-quasi-isomorphism}.
If $f_n=0$ for $n\geq 2$, then the above identities amounts to saying that $f_1:A[1]\rightarrow B[1]$ commutes with all multiplications $b_n$. In this case, we shall call the corresponding map $f_1[-1]:A\rightarrow B$ a \emph{map} of strictly unital $A_\infty$-algebras. Further, it is an \emph{isomorphism} of $A_\infty$-algebras if it is in addition an isomorphism of graded $K$-bimodules.

Let $A$ be a strictly unital $A_\infty$-algebra over $K$. $A$ is said to be \emph{augmented} if there is a map $\varepsilon: A\to K$ of strictly unital $A_\infty$-algebras, which is called the \emph{augmentation} of $A$. Let $A$ and $B$ be two augmented $A_\infty$-algebras over $K$. A \emph{morphism} $f: A\to B$ of augmented $A_\infty$-algebras is a morphism of strictly unital $A_\infty$-algebras such that $\varepsilon_B f_1[-1]=\varepsilon_A$.

Let $A$ be an augmented $A_\infty$-algebra over $K$.  Then the cohomology $H^*(A)$ carries the structure of a minimal augmented $A_\infty$-algebra over $K$ and there is an $A_\infty$-quasi-isomorphism $i: H^*(A)\to A$ of augmented $A_\infty$-algebras such that $i_1$ induces the identity on cohomologies. See \cite{Kadeishvili80}, \cite[Corollaires 1.4.1.4 and 3.2.4.1]{Lefevre03} and \cite{SuHao16}. We call it the \emph{minimal model} of $A$.

\medskip

Let $A$ be an augmented $A_\infty$-algebra over $K$. Denote by $\bar{A}=\ker\varepsilon$. Note that $\bar{A}$ is an $A_\infty$-subalgebra of $A$.
The \emph{bar construction} of $A$, denoted by $BA$, is the graded $K$-bimodule
$$T_{K}(\bar{A}[1])=K\oplus \bar{A}[1]\oplus\bar{A}[1]\ten_{K}\bar{A}[1]\oplus\ldots.$$
It is naturally a coalgebra with comultiplication defined by
splitting the tensors. Moreover, $\{b_n|n\geq 1\}$ uniquely extends
to a $K$-bilinear differential on $BA$ making it a dg coalgebra over $K$. We define the 
\emph{Koszul dual} of $A$ as the graded $k$-dual of $BA$:
\[E(A)=B^{\#}A:=D(BA).\]
As a graded algebra $E(A)=\hat{T}_{K}(D(\bar{A}[1]))$ is the complete tensor algebra of $D(\bar{A}[1])=\Hom_k(\bar{A}[1],k)$
over $K$. It is naturally an augmented dg algebra over $K$ with differential $d$ being the unique continuous $K$-bilinear map satisfying the graded Leibniz rule and taking 
$f \!\in D(\bar{A}[1])$ to $d(f)\! \in B^{\#}\!A$,~defined by
\[d(f)(a_1\ten\cdots \ten a_n)=f(b_n(a_1\ten \cdots \ten a_n)),~~a_1,\ldots,a_n\in \bar{A}[1].\]
Let $\mathfrak{m}$ be the ideal of $E(A)$ generated by $D(\bar{A}[1])$. Then $A$ being minimal amounts to saying that
$d(\mathfrak{m})\subseteq \mathfrak{m}^2$ holds true.  The minimal model $A^*$ of $E(A)$ is called 
the \emph{$A_\infty$-Koszul dual}
of $A$.

The following result is known, see for example \cite{SuHao16} and \cite[Section 19 exercise 4]{FelixHalperinThomas01}. An $A_\infty$-algebra over $K$ has the natural structure of an $A_\infty$-$k$-algebra. Similarly, a dg algebra $A$ over $K$ has the natural structure of a dg $k$-algebra. When considering dg $A$-modules and derived categories, we use this structure of $A$.

\begin{lemma}\label{l:koszul-vs-koszul}
If $A$ is an augmented dg algebra over $K$, then $E(A)$ is quasi-isomorphic to $\RHom_{A}(K,K)$ as a dg $k$-algebra. Here we consider $K$ as a dg $A$-module via the augmentation $\varepsilon$.  In particular, $A^*$ is isomorphic to $\bigoplus_{i\in\mathbb{Z}}\Hom_{\cd(A)}(K,\Sigma^i K)$ as a graded $k$-algebra.
\end{lemma}

If there is an $A_\infty$-quasi-isomorphism  $A\to B$ of augmented $A_\infty$-algebras over $K$, then $E(A)$ and $E(B)$ are quasi-isomorphic as augmented dg algebras over $K$,  see for example \cite[Proposition 1.3.5.1.b and proof of Corollaire 1.3.1.3.b]{Lefevre03}.

\begin{theorem}
\label{t:koszul-double-dual} 
If $E(A)$ is finite-dimensional in each degree,
then $E(E(A))$ is $A_\infty$-quasi-isomorphic to $A$ as an augmented $A_\infty$-algebra over $K$. In particular, $A$ is $A_\infty$-quasi-isomorphic to $E(A^*)$ as an augmented $A_\infty$-algebra over $K$.
\end{theorem}
\begin{proof} The first statement is a `several-object' version of 
 {\cite[Theorem 2.4]{LuPalmieriWuZhang08}} with trivial Adams grading. The proof of  {\cite[Theorem 2.4]{LuPalmieriWuZhang08}} can be easily adapted here.
\end{proof}

Moreover, we can describe the graded algebra underlying $E(A)$ in terms of quivers. Let $e_1,\ldots,e_r$ be the standard basis of $K$. Define $Q$ as the following graded quiver:
\begin{itemize}
\item[--] the set of vertices is $\{1,\ldots,r\}$, 
\item[--] the set of arrows of degree $m$ from $i$ to $j$ is given by a $k$-basis of the degree $m$ component of $e_jD(\bar{A}[1])e_i$. 
\end{itemize}
Then
as a graded algebra $E(A)$ is the graded completion $\widehat{kQ}$ (with respect to the ideal generated by the arrows) of the path algebra $kQ$ of the graded quiver $Q$, see Subsection \ref{ss:minimal-relation} for more details on graded completions.
In this case, $A$ being minimal amounts to saying that the differential of $E(A)$ takes an arrow of $Q$ to a possibly infinite linear combination of paths of length $\geq 2$. In particular, if $A$ is minimal and is concentrated in non-negative degrees and the degree $0$ component of $A$ is $K$, then $Q$ is concentrated in non-positive degrees and the quiver of the algebra $H^0(E(A))$ is the degree zero component $Q^0$ of $Q$.

\smallskip

\noindent{\bf Koszul duality for locally finite non-positive dg algebras.}
The following special case will be important in Section~\ref{s:class-to-relative}. Let $B$ be an augmented dg algebra over $K$ which is a non-positive dg $k$-algebra in the sense of Subsection~\ref{ss:nonpositive-dg-alg-1} such that $H^i(B)$ is finite-dimensional for all $i\in\mathbb{Z}$.  We consider $K$ as a dg $B$-module via the augmentation $\varepsilon$ and denote it by $S$.

\begin{theorem}\label{t:koszul-duality-for-non-positive-dg-alg}
Keep the notation and assumption in the preceding paragraph. Then
\begin{itemize}
\item[(a)] $B^*$ is $A_\infty$-quasi-isomorphic to $\RHom_{B}(S,S)$ as an $A_\infty$-$k$-algebra. 
\item[(b)] $B$ is quasi-isomorphic to $E(B^*)$ as a dg $k$-algebra. 
\end{itemize}
\end{theorem}
\begin{proof}
(a) The desired result follows immediately from Lemma~\ref{l:koszul-vs-koszul} as $B^*$ is $A_\infty$-quasi-isomorphic to $E(B)$.

(b) All the ($A_\infty$-)quasi-isomorphisms below respect  augmentations over $K$. Let $A$ be the minimal model of $B$. Then $E(A)$ and $E(B)$ are quasi-isomorphic, and $A^*$ and $B^*$ are $A_\infty$-isomorphic. As $A$ is non-positive and is finite-dimensional in each degree, $E(A)$ is finite-dimensional in each degree by construction. Applying Theorem~\ref{t:koszul-double-dual} to $A$, we see that $A$ is $A_\infty$-quasi-isomorphic to $E(A^*)$, which is quasi-isomorphic to $E(B^*)$. Therefore $B$ is $A_\infty$-quasi-isomorphic to $E(B^*)$, in particular, they are $A_\infty$-quasi-isomorphic as $A_\infty$-$k$-algebras, and the desired result follows immediately from \cite[Proposition 2.8]{Lunts10}.
\end{proof}

\subsection{Recollements generated by idempotents}\label{ss:recollement}

In this subsection our object of study is the triangle quotient $K^b(\proj-A)/\thick(eA)$, where $A$ is an algebra and $e\in A$
is an idempotent. By Keller's Morita theorem for triangulated categories~\cite[Theorem 3.8 b)]{Keller06d}, the idempotent completion
of this category is equivalent to the perfect derived category $\per(B)$ of some dg algebra $B$. Below we show that we can choose $B$ such that there is a homomorphism of dg algebras $A\rightarrow B$, the restriction
$\cd(B)\rightarrow\cd(A)$ along which is fully faithful.

Following~\cite{BeilinsonBernsteinDeligne82}, a \emph{recollement} of triangulated categories is a diagram
\begin{align}
\begin{xy}
\SelectTips{cm}{}
\xymatrix{\ct''\ar[rr]|{i_*=i_!}&&\ct\ar[rr]|{j^!=j^*}\ar@/^15pt/[ll]^{i^!}\ar@/_15pt/[ll]_{i^*}&&\ct'\ar@/^15pt/[ll]^{j_*}\ar@/_15pt/[ll]_{j_!}}
\end{xy}
\end{align}
of
triangulated categories and triangle functors
such that
\begin{itemize}
\item[1)] $(i^*,i_*=i_!,i^!)$ and
$(j_!,j^!=j^*,j_*)$ are adjoint triples;
\item[2)] $j_!,i_*=i_!,j_*$ are fully faithful;
\item[3)] $j^* i_*=0$;
\item[4)] for every object $X$ of $\ct$ there exist two distinguished triangles
\[i_!i^!X \rightarrow X \rightarrow j_*j^*X \rightarrow i_!i^!X[1] \, \,
\text{ and } \, \,  j_!j^!X \rightarrow X \rightarrow
i_*i^*X \rightarrow j_!j^!X[1],\] where the morphisms starting from and
ending at $X$ are the units and counits.
\end{itemize}


\begin{remark}\label{r:identity-units-and-counits}
Conditions 1) and 2) in conjunction with the Yoneda lemma imply that the composites $i^*i_*$, $i^!i_!$, $j^!j_!$ and $j^*j_*$ are equivalent to the identity functors. 
\end{remark}
 
Let $k$ be a commutative ring and $A$
be a $k$-algebra. 
The following Proposition shows that every idempotent $e \in A$ gives rise to a recollement.
In the literature, much attention has been paid to the special case that $B$ has cohomologies concentrated in degree $0$, see for example~\cite{Cline-Parshall-Scott88,ClineParshallScott96,ClineParshallScott97,KoenigNagase09}.
Recall that $A$ can be viewed as a dg $k$-algebra concentrated in degree $0$ and in this case $\cd(A)=\cd(\Mod-A)$.

\begin{proposition}\label{p:recollement-from-projective-general-case}
Let $A$ be a flat $k$-algebra and $e\in A$ an idempotent. There is a
dg $k$-algebra $B$ with a homomorphism of dg $k$-algebras
$f\colon A\rightarrow B$ and a recollement of derived categories
\begin{align}\label{E:RecollmentByIdempotent}
\begin{xy}
\SelectTips{cm}{}
\xymatrix{\cd(B)\ar[rr]|{\, \, i_*=i_! \, \,}&&\cd(A)\ar[rr]|{\, \,  j^!=j^* \, \, }\ar@/^15pt/[ll]|{\, \, i^! \, \,}\ar@/_15pt/[ll]|{\, \, i^*\, \, }&&\cd(eAe)\ar@/^15pt/[ll]|{\, \, j_* \, \,}\ar@/_15pt/[ll]|{\, \, j_! \, \,}}
\end{xy},
\end{align}
such that the following conditions are satisfied
\begin{itemize}
 \item[(a)] the
adjoint triples $(i^*,i_*=i_!,i^!)$ and $(j_!,j^!=j^*,j_*)$ are given by
\[\begin{array}{ll}
i^*=?\lten_A B, & j_!=?\lten_{eAe} eA,\\
i_*=\RHom_{B}(B,?), &
j^!=\RHom_{A}(eA,?),\\
i_!=?\lten_{B}B, & j^*=?\lten_A Ae,\\
i^!=\RHom_A(B,?),& j_*=\RHom_{eAe}(Ae,?),
\end{array}\]
where $B$ is considered as an $A$-$A$-bimodule via the morphism $f$;
 \item[(b)] $B^i=0$ for $i>0$;
 \item[(c)] $H^0(B)$ is isomorphic to $A/AeA$.
\end{itemize}
\end{proposition}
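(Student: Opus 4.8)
The plan is to construct the dg algebra $B$ explicitly as a dg endomorphism algebra over $A$ and then verify the recollement properties and the conditions (a)--(c) one by one. First I would take a projective resolution of $A/AeA$ as a right $A$-module, or more precisely work with the two-sided ideal $AeA$: since $A$ is flat over $k$, one has a short exact sequence of $A$-$A$-bimodules $0 \to AeA \to A \to A/AeA \to 0$. The idea is to replace $A/AeA$ by a bimodule resolution built out of $A$ and $Ae \otimes_{eAe} eA$; concretely, one can form the bar-type complex with terms $Ae \otimes_{eAe} eA \otimes_A Ae \otimes_{eAe} eA \otimes_A \cdots$ augmenting onto $AeA \hookrightarrow A$. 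Let $P$ denote the resulting non-positive complex of $A$-$A$-bimodules quasi-isomorphic (in the appropriate sense) to $A$ relative to $AeA$, and set $B := \cEnd_A(P)$ or rather the dg endomorphism algebra of a suitable one-sided version $\mathbf{p}A$ of this resolution; the multiplication on $P$ induces a homomorphism of dg algebras $f\colon A \to B$.

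Next I would identify the four functors. The point is that $B$, viewed as an $A$-$A$-bimodule via $f$, is quasi-isomorphic to the object representing "derived tensoring that kills $eA$"; this is exactly the non-commutative analogue of the fact that $K^b(\proj\text{-}A)/\thick(eA)$ is generated by the image of $A$. With $i^* = ?\lten_A B$ and its right adjoint $i_* = \RHom_B(B,?)$ (restriction along $f$), and with $j_! = ?\lten_{eAe}eA$, $j^! = \RHom_A(eA,?) = ?\lten_A Ae$ (these agree because $eA$ is a finitely generated projective $A$-module... wait, rather because $eA$ is projective as a right $A$-module, giving $j^* = ?\lten_A Ae$ and its adjoints), the recollement axioms 1)--4) follow from the standard formalism of a TTF triple attached to an idempotent together with the fact that $f$ induces a fully faithful restriction $\cd(B)\to\cd(A)$. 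For the latter, the key computation is that $B \lten_A B \cong B$ (i.e. $f$ is a "homological epimorphism" of dg algebras onto its image), which one checks on cohomology using the explicit resolution $P$.

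For conditions (b) and (c): the resolution $P$ is built from $A$ in degree $0$ and copies of $Ae\otimes_{eAe}eA$ in negative degrees, so $\cEnd_A$ of it is concentrated in non-positive degrees, giving (b). For (c), $H^0(B) = H^0\cEnd_A(\mathbf{p}A)$ computes $\Hom$ in the quotient category $\cd(A)/\thick(eA)$ from (the image of) $A$ to itself, which by standard quotient-category computation is $A/AeA$ — concretely, $H^0(B)$ is the cokernel of the map $\Hom_A(Ae\otimes_{eAe}eA, A)\to\Hom_A(A,A) = A$ induced by the augmentation, and the image of that map is precisely the two-sided ideal $AeA$.

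\textbf{The main obstacle} I anticipate is the bookkeeping needed to produce $B$ \emph{as an honest dg algebra} (not merely an $A_\infty$-algebra or an object of the homotopy category) together with a strict dg algebra map $f\colon A\to B$: one must choose the bimodule resolution $P$ to be simultaneously a resolution on the left, a resolution on the right, and compatibly a complex of $A$-$A$-bimodules so that $\cEnd_A(P)$ really is a dg algebra and the unit $A\to\cEnd_A(P)$ is a dg algebra homomorphism. The cleanest route is probably to use the (reduced) relative bar resolution of $A$ over the subalgebra generated by $e$, or equivalently $P = \Cone(Ae\otimes_{eAe}eA \to A)$-style iterated construction, and then to verify functoriality and the bimodule compatibilities directly; once $P$ is in hand, everything else is a matter of tracking adjunctions and computing a single cohomology group in degree $0$. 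I would also need Proposition~\ref{p:recollement-from-projective-general-case}'s flatness hypothesis precisely at the point where I claim $Ae\otimes_{eAe}eA$ and the tensor powers are flat enough for the derived tensor products above to be computed without further resolution, and where I identify $H^0(B)$ without $\Tor$-corrections.
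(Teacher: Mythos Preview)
Your approach differs from the paper's in a meaningful way. The paper does not attempt to build $B$ explicitly from a bar-type resolution; instead it first establishes the right half of the recollement by elementary adjunction arguments, identifies the kernel of $j^!$ as $\cd_{A/AeA}(A)$, and then invokes \cite[Theorem~4]{NicolasSaorin09} as a black box to produce a dg algebra $B'$ together with a map $f'\colon A\to B'$ realising the left half of the recollement. (This is where the flatness hypothesis enters, not where you suggest.) The properties $H^i(B')=0$ for $i>0$ and $H^0(B')\cong A/AeA$ are then read off from the long exact sequence of the canonical triangle $Ae\lten_{eAe}eA\to A\to B'\to\cdot$, exactly the computation you indicate for~(c). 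Finally the paper sets $B=\sigma^{\leq 0}B'$ to force~(b). Your explicit route via endomorphism algebras is more constructive and closer in spirit to Dwyer's approach cited in the paper's remark; the paper's route is shorter because it outsources the hard work.

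There is, however, a concrete gap in your argument for~(b). You claim that because the resolution $P$ is concentrated in non-positive degrees, $\cEnd_A(P)$ is too. This is false: the degree-$n$ component of $\cEnd_A(P)$ consists of families of maps $P^i\to P^{i+n}$, and for $n>0$ these need not vanish (for instance $P^{-2}\to P^{-1}$ contributes to degree~$1$). What \emph{is} true is that $H^i(\cEnd_A(P))=0$ for $i>0$, by the same long exact sequence argument the paper uses; but to obtain a genuinely non-positive dg algebra you must then pass to the standard truncation $\sigma^{\leq 0}$, and observe that this truncation of a dg algebra is again a dg algebra (because it is a sub-dg-algebra when the cohomology vanishes in positive degrees). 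This truncation step is essential and is exactly what the paper does; you should add it.
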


This result is known to hold in greater generality, see~\cite[Section 2 and 3]{Dwyer02} (which uses different terminologies). For convenience, we include a proof.

\begin{proof}
The exact functor $\Hom_{A}(eA, ?)=? \otimes_{A}Ae \colon \Mod-A \ra \Mod-eAe$ has both a left adjoint $? \otimes_{eAe}eA$ and a right adjoint $\Hom_{eAe}(Ae, ?)$. Deriving these functors, we obtain the right half of the recollement \eqref{E:RecollmentByIdempotent}.  The derived functors are still adjoint and it is known that $? \lten_{eAe}eA$ is fully faithful (see e.g.~\cite[Lemma 4.2]{Keller94}). An application of the Yoneda Lemma shows that there is a natural isomorphism $j^!j_{!} \cong 1_{\cd(eAe)}$. Hence, $j^!$ is a quotient functor with left and right adjoints. In particular, it is a so called Bousfield localization and colocalization functor. Its kernel is $\cd_{A/AeA}(A) \subseteq \cd(A)$, the full subcategory of complexes with cohomologies in $\Mod-A/AeA$. Hence, $j^!$ yields a recollement (see e.g.~\cite[Section 9]{Neeman99})  

\begin{align}\label{E:FirstStepRecollmentByIdempotent}
\begin{xy}
\SelectTips{cm}{}
\xymatrix{\cd_{A/AeA}(A)\ar[rr] &&\cd(A)\ar[rr]|{\, \,  j^!=j^* \, \, } \ar@/_15pt/[]!<0ex, 0ex>;[ll]!<3ex, 0ex> \ar@/^15pt/[]!<0ex, 0ex>;[ll]!<3ex, 0ex> &&\cd(eAe)\ar@/^15pt/[ll]|{\, \, j_* \, \,}\ar@/_15pt/[ll]|{\, \, j_! \, \,}}
\end{xy}.
\end{align}
By \cite[Theorem 4]{NicolasSaorin09} (which needs the flatness assumption) and the first paragraph after Lemma 4 of~loc.~cit., there exists a dg algebra $B'$ and a morphism of dg-algebras
 $f':A\rightarrow B'$  such that there is a recollement  \eqref{E:RecollmentByIdempotent} and the adjoint triple $(i^*,i_*=i_!,i^!)$
is given as in (a) with $B$ replaced by $B'$. We claim that $H^i(B')=0$ for $i>0$ and that $H^0(f')$ induces an isomorphism of algebras
$A/AeA\cong H^0(B')$. Then taking $B=\sigma^{\leq 0}B'$
and $f=\sigma^{\leq 0}f'$ finishes the proof for (a) (b) and (c).

In order to prove the claim, we take the distinguished triangle associated to $A$.
\begin{eqnarray}\label{e:canonical-triangle-projective}
\begin{array}{cc}
\begin{tikzpicture}[description/.style={fill=white,inner sep=2pt}]
    \matrix (n) [matrix of math nodes, row sep=1em,
                 column sep=2.5em, text height=1.5ex, text depth=0.25ex,
                 inner sep=0pt, nodes={inner xsep=0.3333em, inner
ysep=0.3333em}]
    {  
       Ae\lten_{eAe}eA & A & B' & Ae\lten_{eAe}eA[1] \\
       j_!j^!(A) && i_*i^*(A) \\
    };
\draw[->] (n-1-1.east) -- node[scale=0.75, yshift=2.5mm] [midway] {$\varphi$} (n-1-2.west);    
\draw[->] (n-1-2.east) -- node[scale=0.75, yshift=2.5mm] [midway] {$f'$} (n-1-3.west);  
\draw[->] (n-1-3.east) -- (n-1-4.west); 
\draw[-] ($(n-1-1.south)+(-0.5mm,0)$) -- ($(n-2-1.north)+(-0.5mm,0)$);
\draw[-] ($(n-1-1.south)+(.5mm,0)$) -- ($(n-2-1.north)+(0.5mm,0)$);
\draw[-] ($(n-1-3.south)+(-1mm,1mm)$) -- ($(n-2-3.north)+(-1mm,0)$);
\draw[-] ($(n-1-3.south)+(0mm,1mm)$) -- ($(n-2-3.north)+(0mm,0)$);
\end{tikzpicture}
\end{array}\end{eqnarray}
By applying $H^0$ to the triangle (\ref{e:canonical-triangle-projective}) we obtain a long exact cohomology sequence
\[
\begin{tikzpicture}[description/.style={fill=white,inner sep=2pt}]
    \matrix (n) [matrix of math nodes, row sep=1em,
                 column sep=2.25em, text height=1.5ex, text depth=0.25ex,
                 inner sep=0pt, nodes={inner xsep=0.3333em, inner
ysep=0.3333em}]
    {  
       \, & H^i(Ae\lten_{eAe}eA) & H^i(A) & H^i(B') & H^{i+1}(Ae\lten_{eAe}eA) & \,  \\
    };
\draw[dotted, -] (n-1-1.east) -- (n-1-2.west);    
\draw[->] (n-1-2.east) -- node[scale=0.75, yshift=3mm] [midway] {$H^i(\varphi)$} (n-1-3.west);  
\draw[->] (n-1-3.east) -- node[scale=0.75, yshift=3mm] [midway] {$H^i(f')$} (n-1-4.west); 
\draw[->] (n-1-4.east) -- (n-1-5.west); 
\draw[dotted, -] (n-1-5.east) -- (n-1-6.west); 
\end{tikzpicture}
\]
If $i>0$, both $H^i(A)$ and $H^{i+1}(Ae\lten_{eAe}eA)$ are trivial, and hence $H^i(B')$ is trivial.
If $i=0$, then $H^0(B')\cong H^0(A)/\im(H^0(\varphi))$.
But $H^0(Ae\lten_{eAe}eA)\cong Ae\ten_{eAe}eA$ and the image of $H^0(\varphi)$ is precisely $AeA$.
Therefore $H^0(f'):A\rightarrow H^0(B')$ induces an isomorphism $H^0(B')\cong A/AeA$, which is clearly a homomorphism of algebras.
\end{proof}

\begin{remark}\label{r:B-augmented}
Assume that $k$ is a field and $A/AeA$ is finite-dimensional over $k$. Let $K=(A/AeA)/\rad(A/AeA)$. Assume that $K$ is a direct product of finitely many (say, $r$) copies of $k$. We claim that $B$ has the structure of  an augmented dg algebra over $K$ up to quasi-equivalence in the sense of \cite[Section 7]{Keller94}. Since $\Hom_{\cd(B)}(B,B)=H^0(B)\cong A/AeA$, it follows that $B$ is isomorphic in $\cd(B)$ to the direct sum of $r$ dg $B$-modules, say, $Y_1,\ldots,Y_r$, which are indecomposable and pairwise non-isomorphic in $\cd(B)$. We assume that $Y$ is $\ch$-projective over $B$ and let $C=\sigma^{\leq 0}\cEnd_B(Y)$. Then $Y$ is a quasi-equivalence from $C$ to $B$. It is straightforward to check that $C$ is a non-positive dg algebra over $K=k\{id_{Y_1}\}\times\cdots\times k\{id_{Y_r}\}$ and is augmented. The quasi-equivalence $Y$ yields a derived equivalence $?\lten_C Y: \cd(C)\to \cd(B)$, which takes $C$ to $Y\cong B$, and which restricts to triangle equivalences $\per(C)\to\per(B)$ and $\cd_{fd}(C)\to\cd_{fd}(B)$.
\end{remark}

\begin{corollary}\label{c:restriction-and-induction} Keep the assumptions and notations as in Proposition~\ref{p:recollement-from-projective-general-case}.
\begin{itemize}
 \item[(a)]
The functor
$i^*$ induces an equivalence of triangulated categories 
\begin{align} \label{E:Neeman} \left(K^b(\proj-A)/\thick(eA)\right)^\omega\stackrel{\sim}{\longrightarrow}\per(B), \end{align}
where $(-)^\omega$ denotes the idempotent completion (see \cite{BalmerSchlichting01}).

 \item[(b)] Let $k$ be a field. Let $\cd_{fd,A/AeA}(A)$ be the full subcategory of $\cd_{fd}(A)$ consisting of
complexes with cohomologies supported on $A/AeA$. The functor $i_*$
induces a triangle equivalence
$\cd_{fd}(B)\stackrel{\sim}{\longrightarrow}\cd_{fd,A/AeA}(A)$.
Moreover, the latter category coincides with
$\thick_{\cd(A)}(\fdmod-A/AeA)$.
\end{itemize}
\end{corollary}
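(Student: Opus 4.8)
The plan is to derive both parts from the recollement~\eqref{E:RecollmentByIdempotent} of Proposition~\ref{p:recollement-from-projective-general-case} together with Neeman's localisation theorem for compactly generated triangulated categories~\cite{Neeman99}.

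\emph{Part (a).} First I would record the formal consequences of the recollement that are needed: $i^*i_*\cong\id$ (since $i_*$ is fully faithful), $i^*j_!=0$ (standard, since $j^!i_*=0$), and — from the triangle $j_!j^!X\to X\to i_*i^*X\to j_!j^!X[1]$ — that $\ker i^*=\im j_!$. Since $i^*(A)=A\lten_A B=B$, the functor $i^*$ maps $\per(A)=K^b(\proj-A)$ into $\per(B)$; and since $i^*j_!=0$ and $j_!(eAe)=eA$, it annihilates $\thick(eA)$, hence factors through a functor $K^b(\proj-A)/\thick(eA)\to\per(B)$. The key observation is then that $\im j_!$ is the localising subcategory of $\cd(A)$ generated by the single compact object $eA$: indeed $j_!$ is fully faithful and coproduct-preserving with $j_!(eAe)=eA$, and $\cd(eAe)=\Tria_{\cd(eAe)}(eAe)$, so $\im j_!=\Tria_{\cd(A)}(eA)$. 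Consequently $i^*$ exhibits $\cd(B)$ as the Verdier quotient $\cd(A)/\Tria_{\cd(A)}(eA)$, and the localisation theorem applies: the compact objects of $\Tria_{\cd(A)}(eA)$ are exactly $\thick_{\cd(A)}(eA)$, and the induced functor $\cd(A)^c/\thick(eA)\to\bigl(\cd(A)/\Tria_{\cd(A)}(eA)\bigr)^c$ is fully faithful with every object of the target a direct summand of one in its image. Translating this back through $i^*$ — and using $\cd(A)^c=K^b(\proj-A)$ and $\bigl(\cd(A)/\Tria_{\cd(A)}(eA)\bigr)^c=\per(B)$ — yields precisely the equivalence~\eqref{E:Neeman} after idempotent completion.

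\emph{Part (b).} Here I would use that $i_*$ is restriction of scalars along $f\colon A\to B$. In particular it is fully faithful with essential image $\ker j^*=\cd_{A/AeA}(A)$, the complexes with cohomologies in $\Mod-A/AeA$ (this was identified in the proof of Proposition~\ref{p:recollement-from-projective-general-case}, cf.~\eqref{E:FirstStepRecollmentByIdempotent}), and it does not alter underlying cohomology: $H^*(i_*X)\cong H^*(X)$. Hence $i_*$ sends $\cd_{fd}(B)$ into $\cd_{fd}(A)\cap\cd_{A/AeA}(A)=\cd_{fd,A/AeA}(A)$; conversely any $Y\in\cd_{fd,A/AeA}(A)\subseteq\im i_*$ is of the form $i_*X$, and then $H^*(X)\cong H^*(Y)$ is finite-dimensional, so $X\in\cd_{fd}(B)$. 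Thus $i_*$ restricts to a triangle equivalence $\cd_{fd}(B)\stackrel{\sim}{\longrightarrow}\cd_{fd,A/AeA}(A)$. Finally, by Proposition~\ref{p:standard-t-str}(b) the triangulated category $\cd_{fd}(B)$ is generated by its heart $\fdmod-H^0(B)$, and $H^0(B)\cong A/AeA$ by Proposition~\ref{p:recollement-from-projective-general-case}(c); since $i_*$ restricted to this heart is the canonical inclusion $\fdmod-A/AeA\hookrightarrow\cd(A)$, applying the equivalence $i_*$ to $\cd_{fd}(B)=\thick_{\cd(B)}(\fdmod-A/AeA)$ gives $\cd_{fd,A/AeA}(A)=\thick_{\cd(A)}(\fdmod-A/AeA)$.

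The main obstacle is getting part (a) into a shape where the localisation theorem applies: one must recognise $\im j_!$ as the localising subcategory generated by the \emph{compact} object $eA\in\cd(A)^c$, and check that the functor induced on compacts by the quotient $\cd(A)\to\cd(A)/\Tria_{\cd(A)}(eA)\cong\cd(B)$ is indeed (the restriction of) $i^*$. Once this is in place, both the full faithfulness of the induced functor on $K^b(\proj-A)/\thick(eA)$ and the density of its image in $\per(B)$ are formal, and part (b) reduces to the $t$-structure bookkeeping above.
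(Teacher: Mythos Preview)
Your proof is correct and follows essentially the same approach as the paper: for (a) you identify $\im j_!=\Tria_{\cd(A)}(eA)$ and invoke Neeman's localisation theorem on compact objects, and for (b) you use that $i_*$ is restriction along $f$ together with Proposition~\ref{p:standard-t-str}(b) and $H^0(B)\cong A/AeA$. The only cosmetic difference is that in (b) you first establish $i_*(\cd_{fd}(B))=\cd_{fd,A/AeA}(A)$ directly via the essential-image argument and then deduce the $\thick$ description, whereas the paper sandwiches $\cd_{fd,A/AeA}(A)$ between $i_*(\cd_{fd}(B))$ and $\thick_{\cd(A)}(\fdmod-A/AeA)$ and closes the chain via generation; both arrive at the same place.
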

\begin{proof}
(a) Since $j_{!}(eAe)=eAe \lten_{eAe} eA \cong eA$, $eAe$ generates $\cd(eAe)$ and $j_{!}$ commutes with direct sums, we obtain $\im j_{!}=\Tria(eA)$. Hence, 
$i^*$ induces a triangle equivalence \begin{align} \label{E:BigQuot} \cd(A)/\Tria(eA)\cong\cd(B). \end{align} As a projective $A$-module $eA$ is compact in $\cd(A)$. By definition, $\Tria(eA)$ is the smallest localizing subcategory containing $eA$. Since $\cd(A)$ is compactly generated, Neeman's interpretation (and generalization) \cite[Theorem 2.1]{Neeman92a} of Thomason \& Trobaugh's and Yao's Localization Theorems shows that  restricting \eqref{E:BigQuot} to the subcategories of compact objects yields a triangle equivalence $K^b(\proj-A)/\thick(eA) \ra \per(B)$ up to direct summands. Hence, the equivalence (\ref{E:Neeman}) follows. 

(b) By construction of the dg algebra $B$ in Proposition \ref{p:recollement-from-projective-general-case},  $i_*$ induces a triangulated equivalence between $\cd(B)$ and $\cd_{A/AeA}(A)$, the full subcategory of $\cd(A)$ consisting of
complexes of $A$-modules which have cohomologies supported on $A/AeA$.
Moreover, $i_*$ restricts to a triangle equivalence between $\cd_{fd}(B)$
and $i_*(\cd_{fd}(B))$. The latter category is contained in
$\cd_{fd}(A)$ because $i_*$ is the restriction along the
homomorphism $f:A\rightarrow B$, and hence is contained in
$\cd_{fd}(A)\cap\cd_{A/AeA}(A)=\cd_{fd,A/AeA}(A)$, which
in turn is contained in $\thick_{\cd(A)}(\fdmod-A/AeA)$. By
Proposition~\ref{p:standard-t-str} (b), $\fdmod-H^0(B)$ generates
$\cd_{fd}(B)$. But $i_*$ induces an equivalence from $\fdmod-H^0(B)$
to $\fdmod-A/AeA$. Therefore
$\thick_{\cd(A)}(\fdmod-A/AeA)=i_*(\cd_{fd}(B))$, and hence
$\thick_{\cd(A)}(\fdmod-A/AeA)=i_*(\cd_{fd}(B))=\cd_{fd,A/AeA}(A)$.
We are done.
\end{proof}

The next result is used in the proof of Theorem \ref{t:main-thm-2} and is also interesting in itself.

\begin{corollary}\label{C:Hom-finite}
We keep the notations and assumptions of Proposition \ref{p:recollement-from-projective-general-case}. We assume additionally that $k$ is a field, $A/AeA$ is finite-dimensional over $k$ and every simple $A/AeA$-module has finite projective dimension when viewed as an $A$-module. Then the following statements hold
\begin{itemize}
\item[$(i)$] $H^i(B)$ is finite-dimensional for all $i$. In particular, $\per(B)$ is Hom-finite.
\item[$(ii)$] $K^b(\proj-A)/\thick(eA)$ is Hom-finite.
\end{itemize}
\end{corollary}
\begin{proof}
Corollary \ref{c:restriction-and-induction}(a) shows that $(i)$ implies $(ii)$. Statement $(i)$ follows from Proposition \ref{p:hom-finiteness-of-per}. To apply this proposition, it suffices to show $\cd_{fd}(B) \subseteq \per(B)$ since the other conditions hold by Proposition \ref{p:recollement-from-projective-general-case} and our assumptions.  Let $M \in \cd_{fd}(B)$. Using Corollary  \ref{c:restriction-and-induction}(b) and our assumption that all finite-dimensional $A/AeA$ modules have finite projective dimension over $A$, we obtain $i_*(M) \in K^b(\proj-A)$. Remark \ref{r:identity-units-and-counits} shows that $M\cong i^*i_*(M)$. Since $i^*$ respects compact objects \cite[Theorem 2.1]{Neeman92a}, $i^*(K^b(\proj-A)) \subseteq \per(B)$. This completes the proof.\end{proof}

\begin{remark}\label{r:CompInterprOfRelSingCat}
The triangle equivalences \eqref{E:Neeman} and $\cd(B) \cong \cd_{A/AeA}(A)$ show that 
\begin{align}
\left(\cd_{A/AeA}(A)\right)^{c} \cong \left(K^b(\proj-A)/\thick(eA)\right)^\omega.
\end{align}
In particular, if $A$ is a non-commutative resolution of a complete Gorenstein singularity $R$, then the relative singularity category $\Delta_{R}(A) \cong \cd^b(\mod-A)/\thick(eA)$ is idempotent complete by \cite[Section 3]{BurbanKalck11}. Hence, there is a triangle equivalence
\begin{align}
\left(\cd_{A/AeA}(A)\right)^{c} \cong \Delta_{R}(A).
\end{align}
\end{remark}

\section{Quotient functors associated with idempotents}\label{S:Tale}

\begin{definition}
A triangulated functor $\F\colon \cc \ra \cd$ is called \emph{triangulated quotient functor} if the induced functor $\ul{\F}\colon \cc/\ker \F \ra \cd$ is an equivalence of categories.
\end{definition}

The following well-known lemma may be seen as an triangulated category analogue of the third isomorphism theorem for groups.

\begin{lemma}\label{L:Verdier}
Let $\F \colon \cc \ra \cd$ be a triangulated quotient functor with kernel $\ck$. Let $\cu \subseteq \cc$ be a full triangulated subcategory, let $q\colon\cc \ra \cc/\cu$ be the quotient functor and $\cv=\thick(\F(\cu))$. Then $\ol{\F}\colon \cc/\cu \to \cd/\cv$ is a quotient functor with kernel $\thick\left(q(\ck)\right)$. In particular, $\ol{\F}$ induces an equivalence of triangulated categories. 
$$\frac{(\cc/\cu)}{\thick(q(\ck))} \longrightarrow \frac{\cd}{\cv}.$$
\end{lemma}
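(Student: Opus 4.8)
The plan is to exhibit the claimed functor by a standard two-step universal-property argument for Verdier quotients, and then to check it is an equivalence by producing an inverse, again via universal properties.

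First I would construct the functor. Consider the composite $\cc \xrightarrow{\F} \cd \xrightarrow{p} \cd/\cv$, where $p$ is the canonical quotient functor. This composite is a triangulated functor that sends every object of $\cu$ into $\cv$ (since $\cv=\thick(\F(\cu))$ contains $\F(\cu)$), hence annihilates $\cu$; by the universal property of the Verdier quotient $q\colon\cc\ra\cc/\cu$ it factors (uniquely) as $p\circ\F = G\circ q$ for a triangulated functor $G\colon \cc/\cu \ra \cd/\cv$. Now I claim $G$ annihilates $\thick(q(\ck))$: indeed $G(q(\ck)) = p(\F(\ck)) = 0$ because $\ck=\ker\F$, and since $G$ is triangulated and $\cd/\cv$ is closed under direct summands, $G$ kills the whole thick subcategory generated by $q(\ck)$. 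So $G$ factors through $(\cc/\cu)/\thick(q(\ck))$, giving the desired triangulated functor $\ol G\colon (\cc/\cu)/\thick(q(\ck)) \ra \cd/\cv$.

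Next I would build a quasi-inverse. Since $\F$ is a triangulated quotient functor, the induced $\ul\F\colon \cc/\ck \ra \cd$ is an equivalence; fix a quasi-inverse $\cd \ra \cc/\ck$. Composing with the quotient $\cc/\ck \ra (\cc/\ck)/\thick(\bar q(\ck))$ — here I want to compare $\cc/\ck$ modulo the image of $\cu$ with $(\cc/\cu)$ modulo the image of $\ck$. The clean way is to observe that both $(\cc/\cu)/\thick(q(\ck))$ and $(\cc/\ck)/\thick(\pi(\cu))$ (where $\pi\colon\cc\ra\cc/\ck$) are canonically equivalent to the Verdier quotient $\cc/\thick(\cu,\ck)$ of $\cc$ by the thick subcategory generated by $\cu$ and $\ck$ together; this is the "quotient of a quotient" identification $(\cc/\ca)/(\cb/\ca)\cong\cc/\cb$ applied twice. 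Under $\ul\F$ the subcategory $\thick(\pi(\cu))\subseteq\cc/\ck$ corresponds to $\thick(\F(\cu))=\cv\subseteq\cd$, so $\ul\F$ descends to an equivalence $(\cc/\ck)/\thick(\pi(\cu)) \ra \cd/\cv$. Chaining these identifications with $\ol G$ and checking that the composite is the identity on objects (it is, by construction, since all functors in sight are identity-on-objects quotient functors composed with the equivalence $\ul\F$) yields that $\ol G$ is an equivalence.

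The main obstacle, and the only place requiring genuine care, is the bookkeeping of the various "quotient of a quotient" identifications and making sure the thick subcategories match up correctly: one must verify that $\thick_{\cc/\cu}(q(\ck))$ pulled back to $\cc$ generates, together with $\cu$, the same thick subcategory as $\cu$ together with $\ck$, and likewise on the other side — i.e. that $\thick_{\cc}(\cu\cup\ck)$ is the preimage of both $\thick(q(\ck))$ under $q$ and $\thick(\pi(\cu))$ under $\pi$. This is a routine consequence of the fact that Verdier localization sends thick subcategories to thick subcategories and that preimages of thick subcategories under quotient functors are thick, but it is the step where sign-errors in the argument would hide. Everything else is a direct appeal to the universal property of Verdier quotients and to the hypothesis that $\ul\F$ is an equivalence.
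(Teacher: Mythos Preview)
Your argument is correct and takes a somewhat different route from the paper. The construction of the functor $\ol G$ is the same in both: factor $p\circ\F$ through $q$, then observe the result annihilates $\thick(q(\ck))$. The difference lies in how you prove $\ol G$ is an equivalence. The paper argues directly that $\ol\F\colon\cc/\cu\ra\cd/\cv$ satisfies the universal property of the Verdier quotient by $\thick(q(\ck))$: given any $\G\colon\cc/\cu\ra\ct$ killing $\thick(q(\ck))$, one first uses that $\F$ is a quotient functor to factor $\G\circ q$ through $\cd$, then uses the universal property of $\cd\ra\cd/\cv$ to descend to $\I_2\colon\cd/\cv\ra\ct$ with $\I_2\circ\ol\F=\G$; uniqueness is checked by the same two universal properties in reverse. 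Your approach instead invokes the ``third isomorphism theorem'' for Verdier quotients twice to identify both sides with $\cc/\thick(\cu,\ck)$, and then notes that $\ol G$ is induced by the identity under these identifications. Your method is more structural and makes the symmetry between $\cu$ and $\ck$ transparent, at the cost of relying on the iterated-quotient identification (and the matching of thick subcategories you flag at the end). The paper's diagram chase is more self-contained and avoids that bookkeeping entirely. One small remark: your phrase ``checking that the composite is the identity on objects'' is imprecise --- what you really need is that $\ol G$ fits into the commutative square over $\cc\ra\cc/\thick(\cu,\ck)$, which follows since all the relevant functors are induced from the single composite $\cc\ra\cd\ra\cd/\cv$.
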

\begin{proof}
$\F$ induces a triangle functor $\ol{\F}\colon \cc/\cu \ra \cd/\cv$. We have $\thick(q(\ck)) \subseteq \ker(\ol{\F})$. We have to show that $\ol{\F}$ is universal with this property,  see e.g.~\cite[Theorem 2.1.8 \& Remark 2.1.10]{Neeman99}. Let $\G\colon \cc/\cu \ra \ct$ be a triangle functor with $\thick(q(\ck)) \subseteq \ker(\G)$. We explain the following commutative diagram.
$$
\begin{xy}
\SelectTips{cm}{}
\xymatrix@R=13pt@C=40pt
{
\ck \ar[dd]_{\displaystyle q} \ar[rr] && \cc \ar[rr]^{\displaystyle \F} \ar[dd]_{\displaystyle q} && \cd \ar[dd]^{\displaystyle q'} \ar@{-->}[dl]_(.64){\displaystyle \I_{1}} \\
&&& \ct \\
\thick\left(q(\ck)\right) \ar[rr] && \cc/\cu \ar[ru]^{ \displaystyle \G} \ar[rr]^{\displaystyle \ol{\F}} && \cd/\cv. \ar@{-->}[lu]_{\displaystyle \I_{2}}
}
\end{xy}
$$
$\I_{1}$ exists by the universal property of $\F$ and $\I_{2}$ exists by the universal property of $q'$. Since  $\I_{2}\circ \ol{\F}\circ q=\I_{1}\circ\F=\G \circ q$ the universal property of $q$ implies $\I_{2}\circ\ol{\F}=\G$.

To show uniqueness of $\I_{2}$ let $\mathbb{H}\colon \cd/\cv \ra \ct$ be a triangle functor such that $\mathbb{H}\circ\ol{\F}=\G$. Then $\mathbb{H} \circ q' \circ \F = \G \circ q$ and the universal property of $\F$ imply $\mathbb{H} \circ q'=\I_{1}$. Since $\mathbb{H} \circ q' =\I_{1}= \I_{2} \circ q'$ the universal property of $q'$ yields $\I_{2}=\mathbb{H}$.
\end{proof} 

\noindent
The case $e=f$ of the following proposition will be the most important in the sequel.

\begin{proposition}\label{P:TwoIdempotents}
Let $A$ be a right Noetherian ring and let $e, f \in A$ be idempotents. The exact functor $\F=\Hom_{A}(eA, -)$ induces an equivalence of triangulated categories
\begin{align}\label{E:twoidempotents}
\frac{\cd^b(\mod-A)/\thick(fA)}{\thick(q(\mod-A/AeA))} \longrightarrow \frac{\cd^b(\mod-eAe)}{\thick(fAe)}.
\end{align}
\end{proposition}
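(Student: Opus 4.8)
The plan is to apply Lemma~\ref{L:Verdier} to the functor $\F=\Hom_A(eA,-)\colon\cd^b(\mod{-}A)\to\cd^b(\mod{-}eAe)$, so the first task is to identify the three pieces of data appearing in that lemma. I would take $\cc=\cd^b(\mod{-}A)$, $\cd=\cd^b(\mod{-}eAe)$, $\cu=\thick(fA)$, and then check that $\F$ restricts to a triangulated quotient functor on bounded derived categories of finitely generated modules with the expected kernel. The exactness of $\Hom_A(eA,-)=-\ten_A Ae$ gives the triangle functor between bounded derived categories, and since $A$ is right Noetherian the module $eAe$ and all finitely generated $eAe$-modules are hit (one can lift a finitely generated $eAe$-module $N$ to $N\ten_{eAe}eA\in\mod{-}A$, a finitely generated $A$-module with $\Hom_A(eA,N\ten_{eAe}eA)\cong N$), so $\F$ is essentially surjective; together with the projection formula / adjunction this upgrades to the statement that $\F$ is a triangulated quotient functor. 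Its kernel $\ck$ is exactly the thick subcategory generated by $\mod{-}A/AeA$, i.e.\ $\thick(\mod{-}A/AeA)$, because a bounded complex of finitely generated $A$-modules maps to zero under $-\ten_A Ae$ precisely when all its cohomologies are annihilated by $e$ on the relevant side; this is the finitely generated, Noetherian analogue of the computation of $\ker j^!=\cd_{A/AeA}(A)$ carried out in the proof of Proposition~\ref{p:recollement-from-projective-general-case}.

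With these identifications, Lemma~\ref{L:Verdier} gives an equivalence
\[
\frac{(\cc/\cu)}{\thick(q(\ck))}\longrightarrow \frac{\cd}{\cv},
\]
and it remains only to match this with the asserted formula~\eqref{E:twoidempotents}. On the source side, $\cc/\cu=\cd^b(\mod{-}A)/\thick(fA)$ and $q(\ck)=q(\thick(\mod{-}A/AeA))=\thick(q(\mod{-}A/AeA))$, which is exactly the denominator $\thick(q(\mod{-}A/AeA))$ in the statement. On the target side, $\cv=\thick(\F(\cu))=\thick(\F(\thick(fA)))=\thick(\F(fA))$ since $\F$ is triangulated; and $\F(fA)=\Hom_A(eA,fA)=fA\ten_A Ae=fAe$, so $\cv=\thick(fAe)$. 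Hence the equivalence produced by the lemma is precisely
\[
\frac{\cd^b(\mod{-}A)/\thick(fA)}{\thick(q(\mod{-}A/AeA))}\longrightarrow \frac{\cd^b(\mod{-}eAe)}{\thick(fAe)},
\]
as claimed.

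I expect the main obstacle to be the careful verification that $\F$ is a \emph{triangulated quotient functor} between the \emph{bounded} derived categories of \emph{finitely generated} modules, with kernel exactly $\thick(\mod{-}A/AeA)$. Over the full module categories this is the content of the recollement in Proposition~\ref{p:recollement-from-projective-general-case} (where $j^!=\Hom_A(eA,-)$ is a Bousfield localization with kernel $\cd_{A/AeA}(A)$), but passing to the finitely generated / bounded setting requires the right Noetherian hypothesis and a small argument: essential surjectivity needs that every object of $\cd^b(\mod{-}eAe)$ lifts, which follows by lifting modules along $-\ten_{eAe}eA$ and using $\Hom_A(eA,-)\circ(-\ten_{eAe}eA)\cong \mathrm{id}$ together with a d\'evissage over the bounded $t$-structure; and the kernel computation needs that a finitely generated $A$-module $X$ with $Xe=0$ is the same as a finitely generated $A/AeA$-module, so that $\ker\F\cap\mod{-}A=\mod{-}A/AeA$ and hence (by the standard argument for exact functors killing a Serre-type subcategory) $\ker\F=\thick(\mod{-}A/AeA)$ on all of $\cd^b(\mod{-}A)$. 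Once this bookkeeping is in place, the rest is the purely formal application of Lemma~\ref{L:Verdier} together with the two identifications $q(\thick(\mathcal S))=\thick(q(\mathcal S))$ and $\F(fA)=fAe$ described above.
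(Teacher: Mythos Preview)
Your overall strategy is exactly the paper's: apply Lemma~\ref{L:Verdier} to $\F=\Hom_A(eA,-)$ with $\cu=\thick(fA)$, and then identify $\cv=\thick(\F(fA))=\thick(fAe)$ and $\thick(q(\ck))=\thick(q(\mod{-}A/AeA))$. Those identifications are correct and your kernel computation $\ker\F=\cd^b_{A/AeA}(\mod{-}A)=\thick(\mod{-}A/AeA)$ is fine, since $\F$ is exact and a bounded complex lies in the thick closure of its cohomologies.

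The gap is precisely where you flag it: showing that $\F$ is a \emph{triangulated quotient functor} on $\cd^b(\mod)$. Essential surjectivity together with the kernel description does \emph{not} suffice; you still need the induced functor $\cd^b(\mod{-}A)/\ker\F\to\cd^b(\mod{-}eAe)$ to be fully faithful, and your phrase ``the projection formula / adjunction upgrades this'' does not establish that. The natural adjoint $?\lten_{eAe}eA$ is fully faithful on the big derived categories (this is the recollement of Proposition~\ref{p:recollement-from-projective-general-case}), but it need not land in $\cd^b(\mod{-}A)$ without a finiteness assumption on the Tor-dimension of $eA$ over $eAe$, so the Bousfield-localization argument does not restrict for free. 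Likewise, lifting objects along the \emph{underived} tensor gives you essential surjectivity but no control over morphisms.

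The paper closes this gap differently and more cheaply: it first observes the \emph{abelian} equivalence $\mod{-}A/(\mod{-}A/AeA)\simeq\mod{-}eAe$ (Gabriel), and then invokes Miyachi's compatibility theorem \cite[Theorem~3.2]{Miyachi}, which says that an exact quotient by a Serre subcategory induces a triangulated quotient functor between the bounded derived categories. That single citation replaces the whole d\'evissage you are attempting and yields directly that $\F\colon\cd^b(\mod{-}A)\to\cd^b(\mod{-}eAe)$ is a triangulated quotient functor with kernel $\cd^b_{A/AeA}(\mod{-}A)$; the rest is your formal application of Lemma~\ref{L:Verdier}.
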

\begin{proof}
On the abelian level $\F$ induces a well-known equivalence
\begin{align}\label{E:OneIdempAbelian}
\ul{\F}\colon \frac{\mod-A}{\mod-A/AeA} \longrightarrow \mod-eAe,
\end{align}
which may be deduced from an appropriate version of \cite[Proposition III.5]{Gabriel62} in conjunction with classical Morita theory (see e.g.~\cite[Theorem 8.4.4.]{DrozdKirichenko}). Using a compatibility result, which relates abelian quotients with triangulated quotients of derived categories {\cite[Theorem 3.2.]{Miyachi}}, the equivalence (\ref{E:OneIdempAbelian}) shows that $\F$ induces a triangulated \emph{quotient} functor
$\F\colon \cd^b(\mod-A) \ra \cd^b(\mod-eAe)$. An application of Lemma \ref{L:Verdier} to $\F$ and $\thick(fA)$ completes the proof.
\end{proof}

\begin{remark}
Proposition \ref{P:TwoIdempotents} contains Chen's \cite[Theorem 3.1]{Chen10} as a special case. Namely, if we set $f=1$ and assume that $\prdim_{eAe}(Ae)< \infty$ holds, (\ref{E:twoidempotents}) yields a triangle equivalence
$\cd_{sg}(A)/\thick(q(\mod-A/AeA)) \ra \cd_{sg}(eAe)$. If moreover every finitely generated $A/AeA$-module has finite projective dimension over $A$ (i.e.~the idempotent $e$ is \emph{singularly-complete} in the terminology of loc.~cit.), we get an equivalence $\cd_{sg}(A) \ra \cd_{sg}(eAe)$ of singularity categories \cite[Corollary 3.3]{Chen10}.
\end{remark}
\begin{remark}
Lemma \ref{L:Verdier} has another application. Let $\mathbb{X}=(X, \ca)$ be a (non-commutative) ringed space as studied in Burban \& Kalck \cite{BurbanKalck11}, see also Subsection \ref{ss:Global}. Let $j\colon U \ra X$ be an open immersion. The restriction functor $j^*\colon \mathsf{Perf}(X) \ra \mathsf{Perf}(U)$ is essentially surjective by \cite[Lemma 5.5.1]{ThomasonTrobaugh90}. Moreover, $j^*\colon \cd^b(\Coh(\ca)) \ra \cd^b\!\left(\Coh\left(\left.\ca\right|_{U}\right)\right)$ is a triangulated quotient functor. Hence, Lemma \ref{L:Verdier} yields a triangle equivalence \begin{align}
\frac{\cd^b(\Coh(\ca))/\mathsf{Perf}(X)}{\thick(q(\Coh_{(X \setminus U)}(\ca)))} \longrightarrow \frac{\cd^b\!\left(\Coh\left(\left.\ca\right|_{U}\right)\right)}{\mathsf{Perf}(U)}
\end{align}
In combination with \cite[Proposition 2.6]{BurbanKalck11} this yields a proof of \cite[Theorem 2.7]{BurbanKalck11}. 
This is analogous to the commutative case treated in \cite[Proposition 1.2]{Chen10}.
\end{remark}

\section{The fractional Calabi--Yau property}\label{ss:fract-cy}

Let $\ce$ be an idempotent complete Frobenius $k$-category. 

\begin{definition}\label{d:almost-split-sequence}
Let $\cc$ be a full additive subcategory of $\ce$. We say that $\cc$ \emph{has $d$-almost split sequences} if $\cc$ is Krull--Schmidt and for any indecomposable object $X$ of $\cc$, which is not projective in $\ce$ (respectively, indecomposable object $Y$ of $\cc$, which is not injective in $\ce$) there is an exact sequence (called a \emph{$d$-almost split sequence}, see~\cite[Section 3.1]{Iyama07a})
\begin{align}
 0\longrightarrow Y \stackrel{f_d}{\longrightarrow} C_{d-1} \stackrel{f_{d-1}}{\longrightarrow} \ldots \longrightarrow C_0 \stackrel{f_0}{\longrightarrow} X \longrightarrow 0\nonumber
\end{align}
with terms in $\cc$ and $f_d,\ldots,f_0$ belong to the Jacobson radical $J_\cc$ of $\cc$ such that the following two sequences of functors are exact
\begin{align} 
0 \longrightarrow (?,Y) \stackrel{\cdot f_{d}}{\longrightarrow} (?,C_{d-1}) \longrightarrow \ldots \longrightarrow (?,C_0) \stackrel{\cdot f_{0}}{\longrightarrow} J_\cc(?,X) \longrightarrow & 0,\nonumber
\end{align}
\begin{align} 
0 \longrightarrow (X,?) \stackrel{f_{0}\cdot}{\longrightarrow} (C_{0},?) \longrightarrow \ldots \longrightarrow (C_{d-1},?) \stackrel{f_{d}\cdot}{\longrightarrow} J_\cc(Y,?) \longrightarrow & 0,\nonumber
\end{align}
where  $(X, Y)=\Hom_{\cc}(X, Y)$. As in \cite[Proposition 3.1.1]{Iyama07a} one can show that $X$ is indecomposable if and only if $Y$ is indecomposable and that $X$ and $Y$ mutually determine each other. In particular, one gets well-defined maps on isomorphism classes of indecomposables by setting $\tau_d(X)=Y$ (respectively, $\tau_d^{-1}(Y)=X$).
\end{definition}

We assume that there exists $P \in \ce$ such that $\proj\ce=\add(P)$. Let $\cc=\add(F)$ with $F=P \oplus F'$. We assume that $\cc$ has $d$-almost split sequences. In particular, $\cc$ is Krull-Schmidt and therefore we may assume that $F \cong P \oplus F_1\oplus\ldots\oplus F_r$ such that $F_1,\ldots,F_r$ are pairwise non-isomorphic indecomposable objects, which are not projective in $\ce$.
Let $A=\End_{\ce}(F)$, $R=\End_{\ce}(P)$ and $e=id_P\in A$. Then $A/AeA$ is the stable endomorphism algebra of $F$. 

Recall from Corollary~\ref{c:restriction-and-induction} (b) that $\cd_{fd,A/AeA}(A)$ 
denotes the full subcategory of $\cd_{fd}(A)$ consisting of complexes of $A$-modules which have
cohomologies supported on $A/AeA$ and that $\cd_{fd,A/AeA}(A)$ is generated by $\fdmod-A/AeA$. In particular, if $A/AeA$ is finite-dimensional over $k$, then
$\cd_{fd,A/AeA}(A)$ is
generated by the simple $A/AeA$-modules $S_1,\ldots,S_r$, corresponding to $F_1,\ldots,F_r$, respectively.

\begin{theorem}\label{t:fractionally-cy-property}
In the notation of the paragraph above, if $\add(F)$ has $d$-almost split
sequences and $A/AeA$ is finite-dimensional over $k$, then the following statements hold.
\begin{itemize}
\item[(a)] Any finite-dimensional $A/AeA$-module has finite projective dimension over~$A$. 
\item[(b)] The triangulated category $\cd_{fd,A/AeA}(A)$ admits a Serre functor $\nu$. 
\item[(c)] For $i=1,\ldots,r$, the simple $A/AeA$-module $S_i$ is fractionally $\frac{(d+1)n_i}{n_i}$--CY, where $n_i$ is the smallest positive integer such that $\tau_d^{n_i}(F_i)\cong F_i$ holds. 
\item[(d)] There exists a permutation $\pi$ on the isomorphism classes of simple $A/AeA$-modules such that
$D\Ext_A^l(S,S')\cong \Ext^{d+1-l}_A(S',\pi(S)),$ holds for all  $l \in \Z.$
\end{itemize}
\end{theorem}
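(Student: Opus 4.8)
The plan is to deduce all four statements from the $d$-almost split sequences, viewed as projective resolutions of the simple modules $S_i$. First I would establish (a): given a $d$-almost split sequence $0\to F_i'\to C_{d-1}\to\cdots\to C_0\to F_i\to 0$ (with $F_i'=\tau_d F_i$), applying $\Hom_\ce(F,-)$ and using exactness of the associated functor sequence in Definition~\ref{d:almost-split-sequence}, one obtains an exact sequence of $A$-modules
\[
0\longrightarrow P_{F_i'}\longrightarrow P_{C_{d-1}}\longrightarrow\cdots\longrightarrow P_{C_0}\longrightarrow P_{F_i}\longrightarrow S_i\longrightarrow 0,
\]
where $P_X=\Hom_\ce(F,X)$ is the projective $A$-module associated to $X$. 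This is a length-$(d+1)$ projective resolution of $S_i$, so $\prdim_A S_i\le d+1$; since every finite-dimensional $A/AeA$-module is built from the $S_i$ by finitely many extensions, (a) follows, and moreover $\cd_{fd,A/AeA}(A)\subseteq\per(A)$, so that $\cd_{fd,A/AeA}(A)=\thick_A(S_1,\dots,S_r)$ is a thick subcategory of $\per(A)$ that is also Hom-finite.

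For (b), once $\cd_{fd,A/AeA}(A)\subseteq\per(A)$, the Nakayama functor $\nu=D\RHom_A(-,A)$ of Subsection~\ref{ss:nakayama-functor} satisfies the Auslander--Reiten formula \eqref{E:AR-formula}, so it is a Serre functor on $\per(A)$; the point is to check that $\nu$ preserves the subcategory $\cd_{fd,A/AeA}(A)$. This I would verify by computing $\nu(S_i)$: dualizing the projective resolution above gives $\RHom_A(S_i,A)$ concentrated in degree $d+1$ with value the $A^{op}$-module dual to $S_i$'s resolution, and dualizing back over $k$ produces a complex whose cohomology is again supported on $A/AeA$ (indeed a shift of a simple). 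So $\nu$ restricts to a Serre functor on $\cd_{fd,A/AeA}(A)$, proving (b); and simultaneously the computation shows $\nu(S_i)\cong S_{\sigma(i)}[d+1]$ for some permutation $\sigma$ of $\{1,\dots,r\}$ — this permutation is induced by $\tau_d$, since the dual of the resolution of $S_i$ is the resolution of $S$ attached to $\tau_d F_i$.

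Statement (c) then follows by iterating: $\nu^{n_i}(S_i)\cong S_i[n_i(d+1)]$ where $n_i$ is the length of the $\tau_d$-orbit of $F_i$, which is exactly the fractional Calabi--Yau property with parameters $\tfrac{(d+1)n_i}{n_i}$. Finally, (d): take $\pi=\sigma$ the permutation from (b), and apply the Serre duality isomorphism $D\Hom_{\cd(A)}(S,N)\cong\Hom_{\cd(A)}(N,\nu S)$ with $N=S'[l]$; using $\nu(S)\cong \pi(S)[d+1]$ gives
\[
D\Ext_A^l(S,S')=D\Hom_{\cd(A)}(S,S'[l])\cong\Hom_{\cd(A)}(S'[l],\pi(S)[d+1])=\Ext_A^{d+1-l}(S',\pi(S)),
\]
valid for all $l\in\Z$. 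The main obstacle I anticipate is making rigorous that applying $\Hom_\ce(F,-)$ to the $d$-almost split sequence yields an exact sequence of $A$-modules rather than merely a complex — this is precisely where the two exactness conditions in Definition~\ref{d:almost-split-sequence} (for both covariant and contravariant Hom) are needed, the contravariant one being what makes the $k$-dual of the resolution again a projective resolution (of the module attached to $\tau_d F_i$) rather than just an acyclic complex in the wrong direction.
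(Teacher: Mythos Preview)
Your proposal is correct and follows essentially the same route as the paper: obtain a projective resolution of each $S_i$ by applying $\Hom_\ce(F,-)$ to the $d$-almost split sequence, then compute $\nu(S_i)\cong S_{\pi(i)}[d+1]$ (the paper does this by exhibiting the injective resolution of $S_{\pi(i)}$ via $D\Hom_\ce(-,F)$, which is the same computation you sketch as ``dualize the projective resolution, then $k$-dualize''), and deduce (b)--(d) from this. One small correction: the Auslander--Reiten formula does \emph{not} make $\nu$ a Serre functor on all of $\per(A)$, since $\nu$ need not preserve $\per(A)$; what you actually need---and what the paper checks---is that $\nu$ restricts to an auto-equivalence of $\thick(S_1,\dots,S_r)$, which follows once you know $\nu(S_i)\cong S_{\pi(i)}[d+1]$.
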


\begin{proof}
 For $i=1,\ldots,r$, let $e_i=\id_{F_i}$ and consider it as an element in $A$. Then $1_A=e+e_1+\ldots+e_r$. Let $S_i$ be the simple $A$-module
corresponding to $e_i$. By assumption there is an $d$-almost split sequence (see Definition~\ref{d:almost-split-sequence})
\begin{align}
\eta\colon 0\longrightarrow F_{\pi(i)} \longrightarrow C_{d-1} \longrightarrow \ldots \longrightarrow C_0 \longrightarrow F_i \longrightarrow 0
\end{align}
where $C_{d-1},\ldots,C_0\in\add(F)$ and $\pi$ is the permutation
on the set $\{1,\ldots,r\}$ induced by $F_{\pi(i)}=\tau_d(F_i)$.
Applying $\Hom_{\ce}(F, ?)$ to $\eta$ yields a $A$-projective resolution of $S_i$
\begin{align} \label{eq:proj-res}
0 \longrightarrow (F,F_{\pi(i)}) \longrightarrow (F,C_{d-1}) \longrightarrow \ldots \longrightarrow (F,C_0) \longrightarrow (F,F_i) \longrightarrow S_i \longrightarrow & 0.
\end{align}
In particular, this shows (a). Dually, we acquire an $A$--injective resolution of $S_{\pi(i)}$
\begin{align}\label{eq:inj-res}
 0 \rightarrow S_{\pi(i)} \rightarrow D(F_{\pi(i)},F)
\rightarrow D(C_{d-1},F) \rightarrow
\ldots \rightarrow D(C_0,F) \rightarrow D(F_i,F) \rightarrow
0,
\end{align}
 by applying $D\Hom_{\ce}(?, F)$ to $\eta$. Recall from Section~\ref{ss:nakayama-functor} that there is a
triangle functor $\nu \colon \per(A)\rightarrow \thick(DA)$. From \eqref{eq:proj-res} and \eqref{eq:inj-res}, we see that $\nu(S_i)=S_{\pi(i)}[d+1]$ and therefore the subcategory
$\cd_{fd,A/AeA}(A)=\thick(S_1,\ldots,S_r)\subseteq
\per(A)\cap\thick(DA)$ is invariant under $\nu$. It follows from 
formula (\ref{E:AR-formula}) that the restriction of
$\nu$ on $\cd_{fd,A/AeA}(A)$ is a right Serre functor and hence fully faithful~\cite[Corollary I.1.2]{ReitenVandenBergh02} --- note that $\cd_{fd,A/AeA}(A)\subseteq \per(A)$ is Hom-finite since $\Hom_{\cd(A)}(P, X)$ is finite-dimensional for all $P \in \per(A)$ and $X \in \cd_{fd,A/AeA}(A)$.  As shown above, $\nu$ takes a set of generators of $\cd_{fd,A/AeA}(A)$ to itself up to shift. It follows that 
$\nu$ restricts to an auto-equivalence of
$\cd_{fd,A/AeA}(A)$. Indeed, the essential image of a fully faithful triangle functor is a triangulated subcategory. Since the essential image of $\nu$ contains a set of generators for $\cd_{fd,A/AeA}(A)$, we are done. In particular, $\nu$ is a Serre functor of $\cd_{fd,A/AeA}(A)$. 
Moreover, if
$n$ denotes the number of elements in the $\pi$-orbit of $i$, then
$\nu^n(S_i)\cong S_i[(d+1)n]$, i.e.~ $S_i$ is fractionally Calabi--Yau
of Calabi--Yau dimension $\frac{(d+1)n}{n}$. Finally, we have a chain of isomorphisms
\begin{align*}
D\Ext_{A}^l(S_i,S_j)&\cong D\Hom_{A}(S_i,S_j[l])
\cong \Hom_{A}(S_j,\nu(S_i)[-l])\\
&\cong \Hom_{A}(S_j,S_{\pi(i)}[d+1-l])
\cong \Ext_{A}^{d+1-l}(S_j,S_{\pi(i)}),
\end{align*}
where $i,j=1,\ldots,r$
and $l$ denotes an integer. This proves part (d).
\end{proof}

\section{DG-Auslander algebras and relative singularity categories}\label{s:class-to-relative}

In this section, we show that the relative Auslander singularity categories $\Delta_\ce(\Aus(\ce))$ of certain representation-finite Frobenius categories $\ce$ are triangle equivalent to perfect derived categories $\per(B)$ for dg algebras $B$ which are determined by the stable categories $\ul{\ce}$. In Section \ref{S:MCM-over-Gor}, we apply this general result to the Frobenius categories of maximal Cohen--Macaulay modules over Gorenstein rings and module categories over finite-dimensional selfinjective algebras.

\begin{setup}\label{S:FM} Let $\ce$ be a Frobenius $k$-category over an algebraically closed field $k$ satisfying the following conditions:
\begin{itemize}
\item[(FM1)] $\ce$ has $1$-almost split sequences, also known as Auslander-Reiten sequences (see e.g.~Definition \ref{d:almost-split-sequence})
 --- in particular, $\ce$ is Krull-Schmidt and therefore idempotent complete, see e.g.~\cite[Lemma 3.2.2]{Krause};
\item[(FM2)] $\ce$ has only finitely many isomorphism classes of indecomposable objects. We denote their representatives by $N_{1}, \ldots, N_{s}$; 
\item[(FM3)] the \emph{Auslander algebra} $A=\End_{\ce}(\bigoplus_{i=1}^s N_{i})$ of $\ce$ is right Noetherian;
\item[(FM4)] the stable category $\ul{\ce}=\ce/\!\proj \ce$ is Hom-finite and idempotent complete.
\end{itemize}
By (FM2), the category $\proj\ce$ of projective objects of $\ce$ has an additive generator $P$. Let $e \in A$ be the idempotent endomorphism corresponding to $\id_{P}$. We define the \emph{relative Auslander singularity category} of $\ce$ as the following triangle quotient 
\begin{align}\label{def:relative-auslander-singularity-category} \Delta_{\ce}(A)=\frac{K^b(\proj-A)}{\thick(eA)}. \end{align}
\end{setup}


\subsection{Independence of the Frobenius model}
Let $\ce$ be a Frobenius category as in Setup \ref{S:FM} above. Then $\ct=\ul{\ce}$ is an idempotent complete Hom-finite triangulated category with only finitely many isomorphism classes of indecomposable objects,
say $M_1,\ldots, M_r$ (we refer to Happel \cite{Happel88} for the triangulated structure on $\ct$). By \cite[Theorem 1.1]{Amiot07a},
$\ct$ has a Serre functor and thus has Auslander--Reiten triangles \cite[Theorem I.2.4]{ReitenVandenBergh02}.
Let $\tau$ be the Auslander--Reiten translation. By abuse of notation, $\tau$ also denotes the
induced permutation on $\{1,\ldots,r\}$ defined by $M_{\tau(i)}=\tau M_i$.

The quiver of the \emph{Auslander algebra} $\Lambda(\ct)=\End_{\ct}(\bigoplus_{i=1}^r M_i)$ of $\ct$ 
is the quiver of irreducible maps $\Gamma$ of $\ct$, in which we identify the vertex $i$ with the object $M_i$. We need the conditions (A1)--(A3) of the following lemma in our definition of dg Auslander algebra below. 

\begin{lemma}\label{l:existence-of-nice-minimal-relations} There exists a sequence of elements $\gamma=\{\gamma_1,\ldots,\gamma_r\}$ in $\widehat{k\Gamma}$ which satisfies the following conditions:
\begin{itemize}
 \item[(A1)]  for each vertex $i$ the element $\gamma_i$ is a (possibly infinite) combination of paths of $\Gamma$
from $i$ to $\tau^{-1}i$ of length $\geq 2$;
\item[(A2)] if $\Gamma$ has at least one arrow starting in $i$, then $\gamma_i$ is non-zero;
\item[(A3)] the non-zero
$\gamma_i$'s form a set of minimal relations for $\Lambda(\ct)$ (see Section~\ref{ss:minimal-relation}).
\end{itemize}
\end{lemma}
\begin{proof}
This forms a part of the proof of Theorem \ref{t:main-thm-2}.
\end{proof}

\noindent
We introduce the notion of a dg Auslander algebra, which plays a key role in the sequel.

\begin{definition}\label{d:dg-aus-alg} Let $\ct = \ul{\ce}$ be a triangulated category as above and let $\gamma=\{\gamma_1,\ldots,\gamma_r\}$ be a sequence satisfying the conditions (A1)--(A3) as in Lemma~\ref{l:existence-of-nice-minimal-relations}. 
The \emph{dg Auslander algebra} $\Lambda_{dg}(\ct,\gamma)$ of $\ct$ with respect to $\gamma$
is the dg algebra $(\widehat{kQ},d)$, where $Q$ is a graded quiver and 
$d\colon \widehat{kQ}\rightarrow\widehat{kQ}$ is a map such that
\begin{itemize}
\item[(dgA1)] $Q$ is concentrated in degrees $0$ and $-1$;
\item[(dgA2)] the degree $0$ part of $Q$ is the same as the quiver of irreducible maps $\Gamma$ of
$\ct$;
\item[(dgA3)]for each vertex $i$, there is precisely one arrow $\begin{xy} \SelectTips{cm}{} \xymatrix{\rho_i\colon i \ar@{-->}[r] & \tau^{-1}(i)} \end{xy}$ of degree
$-1$;
\item[(dgA4)] $d$ is the unique continuous $k$-linear map
on $\widehat{kQ}$ of degree $1$ satisfying the graded Leibniz rule
and taking $\rho_i$ ($i\in Q_0$) to the relation $\gamma_i$.
\end{itemize}
\end{definition}

\noindent It turns out that the dg Auslander algebra does not depend on the choice of $\gamma$:

\begin{proposition}\label{p:dg-aus-alg} Let $\ct=\ul{\ce}$ be as above. Let $\gamma=\{\gamma_1,\ldots,\gamma_r\}$ and $\gamma'=\{\gamma'_1,\ldots,\gamma'_r\}$ be sequences of elements of  $\widehat{k\Gamma}$ satisfying the conditions (A1)--(A3). Then the dg Auslander algebras $\Lambda_{dg}(\ct,\gamma)$ and $\Lambda_{dg}(\ct,\gamma')$ are isomorphic as dg algebras.
\end{proposition}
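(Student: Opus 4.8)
The plan is to show that the difference between $\gamma$ and $\gamma'$ can be absorbed by a continuous automorphism of $\widehat{kQ}$ fixing the degree-$0$ part and the vertices. First I would observe that by the conditions (A1)--(A3), both $\{\gamma_i\}$ and $\{\gamma'_i\}$ are sets of minimal relations for the \emph{same} algebra $\Lambda(\ct)=\widehat{k\Gamma}/I$, where $I$ is the (closed) ideal of relations. By Proposition~\ref{p:minimal-relation}, for each vertex $i$ the elements $\gamma_i$ and $\gamma'_i$ have the same image in $e_i(I/(IJ+JI))e_i$ up to a nonzero scalar; after rescaling the arrows $\rho_i$ (which is a harmless dg-isomorphism) we may assume $\gamma_i \equiv \gamma'_i \pmod{IJ+JI}$. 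More precisely, since both sequences generate $I$ and satisfy (A1) (each $\gamma_i,\gamma'_i$ is a combination of paths $i\to\tau^{-1}(i)$), one writes $\gamma'_i - \gamma_i = \sum_{j} (a_{ij}\gamma_j b_{ij})$ with $a_{ij}\in J$ or $b_{ij}\in J$, i.e.\ $\gamma'_i-\gamma_i$ lies in $IJ+JI$ (and in the correct component $e_i(\cdot)e_{\tau^{-1}(i)}$).

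Next I would construct the desired dg-isomorphism $\Phi\colon (\widehat{kQ},d')\to(\widehat{kQ},d)$, where $d$ is built from $\gamma$ and $d'$ from $\gamma'$. Set $\Phi$ to be the identity on all vertices and all degree-$0$ arrows, and send each degree-$(-1)$ arrow $\rho_i$ to $\rho_i + (\text{correction term in degree } -1)$, where the correction term is designed so that $d(\Phi(\rho_i)) = \Phi(d'(\rho_i)) = \Phi(\gamma'_i) = \gamma'_i$ (the latter because $\gamma'_i$ involves only degree-$0$ arrows, which $\Phi$ fixes). Since $d(\rho_i) = \gamma_i$ and $\gamma'_i - \gamma_i \in IJ+JI$, one can write $\gamma'_i - \gamma_i = \sum_a c_a \gamma_{j_a} c'_a$ with $c_a$ or $c'_a$ in $J$; then $\sum_a c_a \rho_{j_a} c'_a$ is a degree-$(-1)$ element whose $d$-image is $\sum_a (dc_a)\rho_{j_a}c'_a \pm c_a\gamma_{j_a}(dc'_a) + c_a\gamma_{j_a}c'_a$; since $c_a,c'_a$ are in degree $0$ they are $d$-closed, so this equals $\sum_a c_a\gamma_{j_a}c'_a = \gamma'_i - \gamma_i$. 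Hence $\Phi(\rho_i) := \rho_i + \sum_a c_a\rho_{j_a}c'_a$ does the job.

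I would then check that $\Phi$ is a well-defined continuous algebra homomorphism: it is defined on generators, extends uniquely by the universal property of the complete path algebra (the correction terms lie in $J$-higher powers in an appropriate sense, so continuity/convergence is fine — here one uses that $\gamma'_i-\gamma_i\in IJ+JI\subseteq J^3$, comparing with $\gamma_i\in J^2$), and it is invertible because modulo higher-order terms it is the identity (a standard argument: an endomorphism of $\widehat{kQ}$ fixing vertices and inducing the identity on $J/J^2$ is an automorphism, constructed by successive approximation). Compatibility with the differentials holds on generators by construction and hence everywhere by the graded Leibniz rule. This exhibits the required isomorphism of dg algebras $\Lambda_{dg}(\ct,\gamma')\cong\Lambda_{dg}(\ct,\gamma)$.

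The main obstacle I anticipate is bookkeeping with the complete (topological) path algebra: one must ensure all the infinite sums defining $\Phi$ and verifying $d\Phi=\Phi d'$ actually converge in the $J$-adic topology, and that $\Phi$ is genuinely continuous and bijective rather than merely a formal expression. The key quantitative input making this work is that the $\gamma_i$ lie in $J^2$ while $\gamma'_i-\gamma_i$ lies in $IJ+JI$, which is strictly deeper in the filtration; this gap is what lets the successive-approximation construction of $\Phi$ (and of $\Phi^{-1}$) converge. The algebraic heart — that $\gamma'_i-\gamma_i$ is a ``trivial'' perturbation, i.e.\ lies in $IJ+JI$ — is exactly Proposition~\ref{p:minimal-relation}, so that step is essentially free once set up correctly.
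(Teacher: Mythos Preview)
Your proposal is correct and follows essentially the same approach as the paper: express each $\gamma'_i$ as $c_i\gamma_i$ plus terms in $IJ+JI$, define the homomorphism on the degree-$(-1)$ arrows by the analogous formula with $\rho_j$ in place of $\gamma_j$, check it commutes with the differentials, and prove invertibility by an iterative/successive-approximation argument. The only cosmetic differences are that the paper keeps the scalars $c_i$ throughout rather than normalizing them away, and establishes the expression $\gamma'_i=c_i\gamma_i+\sum c_{pj}\gamma_jc^{pj}$ directly from the fact that $\gamma'_i\in\overline{(\gamma_1,\ldots,\gamma_r)}$ rather than via Proposition~\ref{p:minimal-relation}. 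One small slip: the relevant component is $e_{\tau^{-1}(i)}(I/(IJ+JI))e_i$, not $e_i(\cdot)e_i$; it is one-dimensional because by (A1) exactly one relation in each minimal set starts at $i$, which is what makes $\bar\gamma'_i$ a scalar multiple of $\bar\gamma_i$.
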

\begin{proof}
See Subsection \ref{sss:proofprop} below.
\end{proof}

Henceforth, we denote by $\Lambda_{dg}(\ct)$ the dg Auslander algebra of $\ct$ with respect to any sequence $\gamma$ satisfying (A1)--(A3). 

\begin{theorem}\label{t:main-thm-2}
Let $\ce$ be a Frobenius category as in Setup \ref{S:FM}. Denote by $\ct=\ul{\ce}$ the stable category and let $A=\Aus(\ce)$ be the Auslander algebra of $\ce$. Then the following statements hold 
\begin{itemize}
 \item[(a)] $\Delta_{\ce}(A)$ is triangle equivalent to $\per(\Lambda_{dg}(\ct))$ (up to direct summands);
 \item[(b)] $\Delta_{\ce}(A)$ is Hom-finite.
\end{itemize}
\end{theorem}
\begin{remark}
If $\Delta_{\ce}(A)$ is idempotent complete, then we can omit the supplement `up to direct summands' in the statement above. In particular, this holds in the case $\ce=\MCM(R)$, where $(R, \mathfrak{m})$ is a local complete Gorenstein $(R/\mathfrak{m})$-algebra \cite{BurbanKalck11}.
\end{remark}

\begin{proof}
Let $P$ be the direct sum of indecomposable projective objects of $\ce$ (each isomorphism class occurring with multiplicity one) and $e \in A$ be the idempotent endomorphism corresponding to the projection on $P$.
By
Corollary~\ref{c:restriction-and-induction} (a) 
and Remark~\ref{r:B-augmented},
there is a non-positive dg $k$-algebra $B$ which is augmented over $K=H^0(B)/\rad H^0(B)$ with $H^0(B)\cong A/AeA$, such that the idempotent completion of the relative singularity category $\left(\Delta_{\ce}(A)\right)^\omega:=\left(K^b(\proj-A)/\thick(eA)\right)^\omega$ is triangle equivalent to $\per(B)$.

The stable Auslander algebra $A/AeA$ is finite-dimensional,
since $\ct$ is Hom-finite by assumption. Theorem \ref{t:fractionally-cy-property}(a) shows that the 
simple $A/AeA$-modules have finite projective dimension over $A$. Corollary \ref{C:Hom-finite} implies statement (b) and in particular, that $H^i(B)$ is finite-dimensional for all $i \in \mathbb{Z}$. This is needed in our Koszul duality argument below.  

Namely, by Theorem~\ref{t:koszul-duality-for-non-positive-dg-alg} (b), $B$ is quasi-isomorphic to $E(B^*)$ as a dg algebra, where $B^*$ is the $A_\infty$-Koszul dual of $B$. Let $S=(A/AeA)/\rad(A/AeA)$. Then
$S\cong \bigoplus_{i=1}^r S_i$ as a dg $B$-module, where $S_1,\ldots,S_r$ is a complete set of pairwise non-isomorphic simple $A/AeA$-modules.
$B^*$ is $A_\infty$-quasi-isomorphic to the dg algebra $\RHom_B(S,S)$ by Theorem~\ref{t:koszul-duality-for-non-positive-dg-alg} (a). Because there is a dg algebra homomorphism $A\rightarrow B$ inducing an embedding $\cd(B)\rightarrow \cd(A)$ (see Proposition~\ref{p:recollement-from-projective-general-case}), the dg algebra $\RHom_B(S,S)$ is quasi-isomorphic to $\RHom_A(S,S)$. Thus as a graded algebra $B^*$ is
isomorphic to $\Ext_A^*(S,S)$. It follows from Theorem~\ref{t:fractionally-cy-property} that $\Ext_A^*(S,S)$
is concentrated in degrees $0$, $1$ and $2$. Clearly
$\Ext_A^0(S_i,S_j)=0$ unless $i=j$ in which case it is $k$. Analysing the proof of Theorem~\ref{t:fractionally-cy-property}
shows that in the current situation the permutation $\pi$ coincides with $\tau$. Therefore,
$\Ext_A^2(S_i,S_j)=0$ unless $j=\tau(i)$. Hence, $E(B^*)=(\widehat{kQ},d)$ for a graded quiver $Q$ and a continuous $k$-linear
differential $d$ of degree $1$, where the graded quiver $Q$ is concentrated in degree $0$ and $-1$,
and starting
from any vertex $i$ there is precisely one arrow $\rho_i$ of degree $-1$  whose target is $\tau^{-1}i$. This shows (dgA1) and (dgA3) in Definition \ref{d:dg-aus-alg}.

Let $Q^0$ denote the
degree $0$ part of $Q$. By the paragraph after Theorem \ref{t:koszul-double-dual}, we know that $Q^0$ is the quiver of $H^0(E(B^*))\cong H^0(B)\cong A/AeA=\Lambda(\ct)$. In other words, $Q^0$ is the quiver of irreducible maps $\Gamma$ of $\ct$, so $E(B^*)$ also satisfies (dgA2).

 Moreover, 
$\gamma=\{d(\rho_1),\ldots,d(\rho_r)\}$ is
a set of relations for $\Lambda(\ct)$, i.e. $\Lambda(\ct)=\widehat{kQ^0}/\overline{(d(\rho_i))}$. In other words, we have to show that $\im d^{-1}=\overline{(d(\rho_i))}$. The inclusion `$\subseteq$' follows from the continuity of $d$. To show the other inclusion, it suffices to prove that there exists $l>0$ such that $J^l \subseteq \im d^{-1}$, where $J \subseteq \widehat{kQ^0}$ is the ideal generated by the arrows. Let $q\colon \widehat{kQ^0} \to \widehat{kQ^0}/\im d^{-1}$ be the canonical projection. Then $q(J) \subseteq \rad \widehat{kQ^0}/\im d^{-1}$ is a nilpotent ideal, since $\widehat{kQ^0}/\im d^{-1} \cong H^0(E(B^*))\cong H^0(B)$ is finite-dimensional. This completes the argument. 

We claim that $\gamma$ is a sequence satisfying the conditions (A1)--(A3) of Lemma \ref{l:existence-of-nice-minimal-relations}. In particular, $E(B^*)$ satisfies (dgA4) with respect to $\gamma$. This completes the proof that $E(B^*)$ is isomorphic to the dg Auslander algebra $\Lambda_{dg}(\ct)$.

We prove the claim.
Let $J$ be the ideal of $\widehat{k\Gamma}$ generated by all arrows. Since $B^*$ is a minimal $A_\infty$-algebra, it follows that
$d(\rho_j)\in J^2$ holds for any $j=1,\ldots,r$, see Subsection~\ref{ss:dual-bar-construction}.
Since we already know that $ 
\begin{xy}
\SelectTips{cm}{}
\xymatrix{\rho_{i}\colon i \ar@{-->}[r] & \tau^{-1}(i)}
\end{xy}$ holds, $d(\rho_i)$ is a combination of paths of length $\geq 2$ from $i$ to $\tau^{-1}i$, for all $i=1, \ldots, r$. Hence, condition (A1) holds. In order to show (A2), we assume that $\Gamma$ has an arrow starting in $i$. 
Then there is an Auslander--Reiten triangle
\begin{align}\label{E:AR-triangle}
\begin{xy}
\SelectTips{cm}{}
\xymatrix{
M_i \ar[r]^{f_i} & X_i \ar[r]^{g_i} & M_{\tau^{-1}(i)} \ar[r] & M_i[1] 
}
\end{xy}
\end{align}
in $\ct$, where $f_i$ and $g_i$ are non-zero and irreducible. We may view $f_i$ and $g_i$ as elements 
of $\widehat{k\Gamma}$. The arrows\footnote{When we write `arrow', we also mean the corresponding irreducible map in $\ct$.} of $\Gamma$ which start in $i$ form a basis of the vector space of irreducible maps $\rad(M_i, X_i)/ \rad^2(M_i, X_i)$, see Happel \cite[Section 4.8]{Happel88}. In particular, $f_i$ may
be written as follows
\begin{align}\label{E:f}
f_i =\sum_{j=1}^m\lambda_j\alpha_j +r_i, 
\end{align}
where the $\alpha_j$ are the arrows starting in $i$, $r_i \in \rad^2(M_i, X_i)$ and the $\lambda_j \in k$ are not all zero. Similarly,
\begin{align}\label{E:g}
g_i =\sum_{j=1}^m\mu_j\beta_j +s_i, 
\end{align}
where the $\beta_j$ are the arrows ending in $\tau^{-1}(i)$, $s_i \in \rad^2(X_i,M_{\tau^{-1}(i)})$ and the $\mu_j \in k$ are not all zero. 

Define $m_i = g_i f_i$ in $\widehat{k\Gamma}$, which is a relation for $\Lambda(\ct)$, since (\ref{E:AR-triangle}) is a triangle. Therefore,
it is generated by $\{d(\rho_1),\ldots, d(\rho_r)\}$. In other words, 
there exists an index set $P$ and elements $c_{pj},c^{pj}\in\widehat{k\Gamma}$ ($(p,j)\in P
\times \{1,\ldots,r\}$) such that
\begin{eqnarray}\label{E:GenMeshRel}
 m_i &=& \sum_{j=1}^r\sum_{p\in P} c_{pj}d(\rho_j)c^{pj}.
\end{eqnarray}
 If $j\neq i$ and $c_{pj}d(\rho_j)c^{pj}\neq 0$,
then $c_{pj}d(\rho_j)c^{pj}$ is a combination of paths of length at least $4$, because $m_i$ is a combination of paths from $i$ to $\tau^{-1}i$,
while $d(\rho_j)$ is a combination of path of length $\geq 2$ from $j$ to $\tau^{-1}j$. Using \eqref{E:f} and \eqref{E:g}, we see that $m_i=g_i f_i$ has a non-zero length $2$ component. Thus \eqref{E:GenMeshRel} implies that $\sum_{p \in P} c_{pi}d(\rho_i)c^{pi}$
is non-zero and its length $2$ component equals that of $m_i$. 
In particular, $d(\rho_i)$ is non-zero and cannot be generated by 
$\{d(\rho_j)\}_{j\neq i}$. To summarise, $d(\rho_i)\neq 0$ if and only if $\Gamma$ has arrows starting in $i$ (A2), and the non-zero $d(\rho_i)$'s
form a set of minimal relations for $\Lambda(\ct)$ (A3).
\end{proof}

\begin{remark}
 Let $\ct$ be an idempotent complete Hom-finite algebraic triangulated category with only finitely 
many isomorphism classes of indecomposable objects. We say that $\ct$ is \emph{standard} if the Auslander algebra
$\Lambda(\ct)$ is given by the Auslander--Reiten quiver with mesh relations, see~\cite[Section 5]{Amiot07a}. Examples of
non-standard categories can be found in~\cite{Riedtmann83,Asashiba99}.

Assume that $\ct$ is standard and $\ct\cong
\underline{\ce}$ for some Frobenius category $\ce$ as in Setup \ref{S:FM}. Theorem~\ref{t:main-thm-2} shows that 
$\Delta_{\ce}(A)$ is determined by the Auslander--Reiten quiver of $\ct$ (up to direct summands).
\end{remark}

\subsection{Proof of Proposition \ref{p:dg-aus-alg} } \label{sss:proofprop}

By the assumptions (A1)--(A3), there exist $c_{i}\in k^{\times}$ ($i=1,\ldots,r$), an index set $P$ and $c_{pj},c^{pj}\in
\widehat{k\Gamma}$ ($(p,j)\in P\times \{1,\ldots,r\}$), such that for each pair $(p,j)$, at least one of $c_{pj}$ and $c^{pj}$ belongs to the ideal of $\widehat{k\Gamma}$ generated by all arrows,
and
\begin{eqnarray}\label{eq:gamma}
 \gamma'_i &=& c_{i}\gamma_i +\sum_{j=1}^r\sum_{p\in P}c_{pj}\gamma_jc^{pj}.
\end{eqnarray}
To see that $c_i \neq 0 $, using that the $\gamma'_j$ form a set of relations, we write
\begin{eqnarray}\label{eq:gammapr}
 \gamma_i &=& d_{i}\gamma'_i +\sum_{j=1}^r\sum_{p\in P}d_{pj}\gamma'_jd^{pj}
\end{eqnarray}
with $d_{pj}$ or $d^{pj}$ contained in the ideal of $\widehat{k\Gamma}$ generated by the arrows and $d_i \in k$. Using \eqref{eq:gamma} to substitute the $\gamma'_j$ in \eqref{eq:gammapr} we obtain
\begin{eqnarray}\label{eq:gammaf}
 \gamma_i &=& d_{i}c_i\gamma_i +\sum_{j=1}^r\sum_{p\in P}\tilde{d}_{pj}\gamma_j\tilde{d}^{pj}
\end{eqnarray}
This shows that $d_i c_i =1$, since $\tilde{d}_{pj}$ or $\tilde{d}^{pj}$ are contained in the arrow ideal.

We define a continuous graded $k$-algebra homomorphism $\varphi\colon\Lambda_{dg}(\ct,\gamma')\rightarrow \Lambda_{dg}(\ct,\gamma)$ as follows: it is the identity on the degree $0$ part and for arrows of degree $-1$ we set
\begin{eqnarray}\label{eq:rho}
 \varphi(\rho'_i) &=& c_{i}\rho_i +\sum_{j=1}^r\sum_{p\in P}c_{pj}\rho_j c^{pj}.
\end{eqnarray}
Since $\gamma_i=d(\rho_i)$ and $\gamma'_i=d(\rho'_i)$, it follows from (\ref{eq:gamma}) and (\ref{eq:rho}) that
$\varphi$ is a homomorphism of dg algebras. The equation
(\ref{eq:rho}), yields
\begin{eqnarray}\label{eq:rho'}
 \rho_i &=& c_i^{-1}\varphi(\rho'_i) -c_i^{-1}\sum_{j=1}^r\sum_{p\in P}c_{pj}\rho_jc^{pj}.
\end{eqnarray}
By iteratively substituting $c_j^{-1}\varphi(\rho'_j) -c_j^{-1}\sum_{k=1}^r\sum_{p\in P}c_{pk}\rho_jc^{pk}$ for $\rho_{j}$ on the right hand side of (\ref{eq:rho'}), we see
that there exists an index set $P'$ and elements $c'_{pj},c'^{pj}\in\widehat{k\Gamma}$ ($(p,j)\in P'\times \{1,\ldots,r\}$) such that
for each pair $(p,j)$ at least one of $c'_{pj}$ and $c'^{pj}$ belongs to the ideal of $\widehat{k\Gamma}$ generated by all arrows,
and the following equation holds
\begin{eqnarray}
\rho_i = c_i^{-1}\varphi(\rho'_i) -\sum_{j=1}^r\sum_{p\in P'}c'_{pj}\varphi(\rho'_j)c'^{pj}.
\end{eqnarray}
Define a continuous graded $k$-algebra homomorphism $\varphi'\colon\Lambda_{dg}(\ct,\gamma)\rightarrow \Lambda_{dg}(\ct,\gamma')$ as follows: $\phi'$ is the identity on the degree $0$ part and for arrows of degree $-1$ we set:
\begin{eqnarray}
\varphi'(\rho_i) &=& c_i^{-1}\rho'_i -\sum_{j=1}^r\sum_{p\in P'}c'_{pj}\rho'_j c'^{pj}.
\end{eqnarray}
It is clear that $\varphi\circ\varphi'=id$ holds.
Since $\varphi'$ and $\varphi$ have a similar form, the same argument as above shows that there exists a continuous graded $k$-algebra homomorphism $\varphi''\colon\Lambda_{dg}(\ct,\gamma')\rightarrow \Lambda_{dg}(\ct,\gamma)$ such that $\varphi'\circ\varphi''=id$ holds. Therefore we have $\varphi=\varphi''$. In particular, we see that $\varphi$ is an isomorphism.


\section[Classical vs. relative singularity categories for Gorenstein rings]{Classical vs. relative singularity categories for Gorenstein rings}\label{S:MCM-over-Gor}

Let $k$ be an algebraically closed field. Throughout this subsection $(R, \mathfrak{m})$ and $(R', \mathfrak{m}')$ denote commutative local complete Gorenstein $k$-algebras, such that their respective residue fields are isomorphic to $k$.

The results in this section actually hold in greater generality. Namely, we may (at least) replace $R$ and $R'$ respectively by Gorenstein $S$-orders in the sense of \cite[Section III.1]{Auslander76} or finite-dimensional selfinjective $k$-algebras. Here, $S=(S, \mathfrak{n})$ denotes a complete regular Noetherian $k$-algebra, with $k \cong S/\mathfrak{n}$. We decided to stay in the more restricted setup above to keep the exposition clear and concise. It is mostly a matter of heavier notation and not hard to work out the more general results. 
\subsection{Classical singularity categories}

Let $\MCM(R)$ be the category of maximal Cohen--Macaulay $R$-modules. Note that $\MCM(R)$ is a Frobenius category with $\proj \MCM(R) = \proj-R$ (see e.g.~ \cite{Buchweitz87}). Hence, $\ul{\MCM}(R)=\MCM(R)/\proj-R$ is a triangulated category \cite{Happel88}.

The following concrete examples of hypersurface rings are of particular interest: 
Let $R=\C\llbracket z_{0}, \ldots, z_{d}\rrbracket/(f)$, where $d \geq 1$ and $f$ is one of the following polynomials
\begin{itemize}
\item[$(A_{n})$] \quad  $z_0^2 + z_1^{n+1} + z^2_{2} + \ldots + z_{d}^2 \quad  \,\, \, \, \, ( n \geq 1 )$,
\item[$(D_{n})$] \quad $z_0^2z_1 + z_1^{n-1} + z^2_{2} + \ldots + z_{d}^2 \quad ( n \geq 4 )$,
\item[$(E_{6})$] \quad $z_0^3 + z_1^4 + z^2_{2} + \ldots + z_{d}^2$,
\item[$(E_{7})$] \quad $z_0^3 + z_0z_1^3 + z^2_{2} + \ldots + z_{d}^2$,
\item[$(E_{8})$] \quad $z_0^3 + z_1^5 + z^2_{2} + \ldots + z_{d}^2$.
\end{itemize}
Such a $\C$-algebra $R$ is called \emph{ADE--singularity} of dimension $d$. As hypersurface singularities they are known to be Gorenstein (see e.g.~ \cite{BrunsHerzog}). The following result is known as Kn\"orrer's Periodicity Theorem, see \cite{Knoerrer87}. It was the main motivation for Theorem \ref{t:main-thm-2} and Theorem \ref{t:Classical-versus-generalized-singularity-categories}.

\begin{theorem} \label{t:knoerrer}
Let $d \geq 1$ and $k$ an algebraically closed field such that $\mathrm{char}\, k \neq 2$. Let $S=k\llbracket z_{0}, \ldots, z_{d} \rrbracket$ and $f \in (z_{0}, \ldots, z_{d}) \setminus \{0\}$. Set $R=S/(f)$ and $R'=S\llbracket x, y \rrbracket/(f+xy)$. Then there is a triangle equivalence 
\begin{align}
\ul{\MCM}(R') \rightarrow \ul{\MCM}(R).
\end{align}
\end{theorem}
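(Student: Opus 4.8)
The plan is to reduce everything to matrix factorisations. Recall Eisenbud's theorem that for a regular local ring $T$ and a non-zero-divisor $g\in\mathfrak{m}_T$ every maximal Cohen--Macaulay $T/(g)$-module has an eventually $2$-periodic free resolution; this gives a triangle equivalence between $\ul{\MCM}(T/(g))$ and the homotopy category $\mathsf{HMF}(T,g)$ of \emph{matrix factorisations} of $g$ --- pairs $(\varphi\colon P_{1}\to P_{0},\ \psi\colon P_{0}\to P_{1})$ of maps between finitely generated free $T$-modules with $\psi\varphi=g\cdot\mathrm{id}$ and $\varphi\psi=g\cdot\mathrm{id}$, equipped with the usual triangulated structure (shift interchanging the two components up to sign, triangles from mapping cones). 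Applying this with $g=f$ over $S$ and with $g=f+xy$ over $S\llbracket x,y\rrbracket$, it suffices to produce a triangle equivalence $\mathsf{HMF}(S,f)\to\mathsf{HMF}(S\llbracket x,y\rrbracket,f+xy)$; the functor asserted in the statement is then a quasi-inverse.

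First I would write down \emph{Kn\"orrer's functor} $K$: it sends $(\varphi,\psi)$ over $S$ to the pair of block maps over $S\llbracket x,y\rrbracket$,
\[
\Phi=\begin{pmatrix}\varphi & y\\ x & -\psi\end{pmatrix},\qquad
\Psi=\begin{pmatrix}\psi & y\\ x & -\varphi\end{pmatrix},
\]
obtained by base-changing $P_{0},P_{1}$ to $S\llbracket x,y\rrbracket$ and letting $x,y$ denote the corresponding scalar multiples of the identity. Since $x$ and $y$ are central and $\psi\varphi=\varphi\psi=f$, a direct computation gives $\Psi\Phi=\Phi\Psi=(f+xy)\cdot\mathrm{id}$, so $(\Phi,\Psi)$ is a matrix factorisation of $f+xy$; compatibility with morphisms, homotopies, the shift and mapping cones is routine, so $K$ is a well-defined triangle functor.

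The second step is full faithfulness of $K$. Here I would compare Hom-complexes: for $X,Y\in\mathsf{HMF}(S,f)$ the total Hom-complex computing $\Hom(KX,KY)$ carries a two-step filtration --- equivalently, an action of the Koszul complex on $(x,y)$ over $S\llbracket x,y\rrbracket$ --- whose subquotients reassemble the Hom-complex of $X$ and $Y$ over $S$, base-changed to $S\llbracket x,y\rrbracket$. Since the Koszul complex on $(x,y)$ resolves $S$ over $S\llbracket x,y\rrbracket$, passing to cohomology yields $\Hom_{\mathsf{HMF}(S,f)}(X,Y)\cong\Hom_{\mathsf{HMF}(S\llbracket x,y\rrbracket,f+xy)}(KX,KY)$ in every degree. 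Equivalently, one exhibits an explicit adjoint of $K$ (a ``reduction modulo $(x,y)$'' functor) and checks that the unit and counit are stable isomorphisms.

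The main obstacle is \emph{essential surjectivity}: every matrix factorisation $(A,B)$ of $f+xy$ over $S\llbracket x,y\rrbracket$ must be shown homotopy equivalent to one of the form $KX$. Here I would run the normal-form argument of Kn\"orrer \cite{Knoerrer87} (and Solberg \cite{Solberg89}, which also handles the graded situation): using that multiplication by $x$ and by $y$ acts compatibly with the factorisation, one performs invertible row and column operations over $S\llbracket x,y\rrbracket$ --- which preserve the homotopy type of a matrix factorisation --- so as to bring $A$ into a block form in which the contributions of $x$ and of $y$ become identity blocks of equal size; the complementary block is then forced to be a matrix factorisation of $f$ defined over $S$, and $(A,B)$ becomes isomorphic to its image under $K$. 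The delicate points are the termination of this reduction and verifying that the leftover genuinely descends to $S$; for these I would appeal to \cite{Knoerrer87,Solberg89}. (Kn\"orrer's own route organises this in two steps, via the double branched cover $S\llbracket z\rrbracket/(f+z^{2})$ and a $\Z/2$-action; alternatively one can bypass the explicit computation using the dg-enhanced or semiorthogonal description of singularity categories of hypersurfaces. The matrix-factorisation argument above is, however, the most self-contained.)
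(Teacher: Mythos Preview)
The paper does not prove this theorem at all: it is stated with the citation \cite{Knoerrer87, Solberg89} and used as input, without any argument supplied. So there is no ``paper's own proof'' to compare against.

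Your sketch is a correct outline of the standard matrix-factorisation proof of Kn\"orrer periodicity, essentially the approach of the cited references. The definition of the tensor functor $K$ is right (your block-matrix check $\Psi\Phi=\Phi\Psi=(f+xy)\cdot\mathrm{id}$ goes through because $x,y$ are central), and you correctly identify essential surjectivity as the substantive step and defer it to \cite{Knoerrer87, Solberg89}. One small caution: your full-faithfulness paragraph is a bit hand-wavy --- the ``Koszul filtration'' heuristic is the right intuition, but if you want a self-contained argument you should either write down the adjoint explicitly (restriction along $S\llbracket x,y\rrbracket\to S$ on matrix factorisations, with the unit/counit computation) or set up the spectral sequence carefully. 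As written this is a good proof plan, not yet a proof; but since the paper itself only cites the result, that is already more than what the paper provides.
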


\begin{definition}
We say that $R$ is \emph{$\MCM$--finite} if there are only finitely many isomorphism classes of indecomposable maximal Cohen--Macaulay $R$--modules.
\end{definition}

\begin{remark}
Solberg \cite{Solberg89} showed that Theorem \ref{t:knoerrer} also holds in characteristic $2$ if $R$ is $\MCM$-finite.
\end{remark}

It follows from Theorem \ref{t:knoerrer} that $R$ is $\MCM$--finite if and only if $R'$ is $\MCM$-finite. The ADE--curve singularities are  $\MCM$-finite by work of Drozd \& Roiter \cite{DrozdRoiter} and Jacobinsky \cite{Jacobinsky}. Moreover, the ADE--surface singularities are $\MCM$-finite by work of Herzog  \cite{Herzog}. This has the following well-known consequence.

\begin{corollary} Let $R$ be an ADE--singularity as above. Then $R$ is $\MCM$--finite.
\end{corollary}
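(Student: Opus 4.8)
The statement asks us to deduce that every ADE--singularity $R$ (of arbitrary Krull dimension $d\geq 1$) is $\MCM$--finite. The plan is to reduce the higher--dimensional cases to the curve case $d=1$ (and, depending on parity, possibly the surface case $d=2$) by repeated application of Kn\"orrer's Periodicity Theorem~\ref{t:knoerrer}, and then invoke the classical finiteness results cited just above the corollary.

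\textbf{Step 1: Reduction via Kn\"orrer periodicity.} Fix an ADE polynomial $f=f(z_0,\ldots,z_d)$ from the list. Note that for each type the polynomial has the shape $g(z_0,z_1)+z_2^2+\cdots+z_d^2$, where $g$ is the corresponding ADE polynomial in two variables ($z_0^2+z_1^{n+1}$ for $A_n$, and so on). Thus for $d\geq 3$ we may write $f = h + z_{d-1}z_d'$ after the standard change of coordinates $z_{d-1}^2+z_d^2 = (z_{d-1}+iz_d)(z_{d-1}-iz_d)$ (here we use that $k=\C$, so $i\in k$), where $h=g(z_0,z_1)+z_2^2+\cdots+z_{d-2}^2$ is again an ADE polynomial of the same type in fewer variables. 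Applying Theorem~\ref{t:knoerrer} with $S=\C\llbracket z_0,\ldots,z_{d-2}\rrbracket$ gives a triangle equivalence $\ul{\MCM}(R)\cong\ul{\MCM}(R_0)$, where $R_0=\C\llbracket z_0,\ldots,z_{d-2}\rrbracket/(h)$ is the ADE--singularity of the same type and dimension $d-2$. Iterating, we reach an ADE--singularity of dimension $1$ (if $d$ is odd) or dimension $2$ (if $d$ is even).

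\textbf{Step 2: Passing from stable equivalence to $\MCM$--finiteness.} A triangle equivalence $\ul{\MCM}(R)\cong\ul{\MCM}(R_0)$ need not a priori identify the \emph{indecomposable} MCM modules, because $\ul{\MCM}$ forgets the projective (= free) summands; however, the number of indecomposable objects of the stable category differs from the number of indecomposable MCM modules only by $1$ (the free module $R$ itself, which becomes zero in $\ul{\MCM}$). More precisely, as remarked in the excerpt, the sentence right before the corollary already states that ``$R$ is $\MCM$--finite if and only if $R'=S\llbracket x,y\rrbracket/(f+xy)$ is $\MCM$--finite''; so I would simply quote this equivalence directly. Hence $R$ is $\MCM$--finite as soon as the base case (dimension $1$ or $2$, same ADE type) is $\MCM$--finite.

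\textbf{Step 3: The base cases.} For dimension $1$ this is the theorem of Kiyek \& Steinke~\cite{KiyekSteinke}, which classifies the (finitely many) indecomposable MCM modules over the ADE--curve singularities; for dimension $2$ it is the classical McKay correspondence / Artin \& Verdier~\cite{ArtinVerdier85} (the ADE--surface singularities are precisely the Kleinian quotient singularities $\C^2/G$, which have finite CM type by Herzog/Auslander). Combining Steps 1--3 finishes the proof. \textbf{Main obstacle.} The only genuine subtlety is making sure the change of coordinates in Step 1 really lands $f$ in the form $h+xy$ with $h$ an ADE polynomial of the same type, and that one does not need $d\geq 1$ to fail: for $d=1$ and $d=2$ there is nothing to reduce, and for $d\geq 3$ the two ``square'' variables $z_{d-1},z_d$ are available. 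So the bookkeeping of types and dimensions under Kn\"orrer periodicity is the point to be careful about, but it is entirely routine.
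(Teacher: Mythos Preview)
Your proof is correct and follows essentially the same route as the paper: the argument given there reduces to the curve and surface cases via Kn\"orrer periodicity and then cites Kiyek--Steinke and Artin--Verdier, exactly as you do. Your additional remark about the coordinate change $z_{d-1}^2+z_d^2=(z_{d-1}+iz_d)(z_{d-1}-iz_d)$ (to match the $f+xy$ form of Theorem~\ref{t:knoerrer}) makes explicit a detail the paper leaves implicit.
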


\begin{remark} 
If $k$ is an arbitrary algebraically closed field, then the ADE--polynomials listed above still describe $\MCM$--finite singularities. Yet there exist further $\MCM$--finite rings if $k$ has characteristic $2$, $3$ or $5$ (complete lists are contained in \cite{GreuelKroening90}).
\end{remark}

\subsection{Relative singularity categories}

Henceforth, let $F'$ be a finitely generated $R$-module and $F=R \oplus F'$. We call $A=\End_{R}(F)$ a \emph{partial resolution of $R$}. If $A$ has finite global dimension we say that $A$ is a \emph{resolution}. Denote by $e\in A$ the idempotent endomorphism corresponding to the identity morphism $\id_{R}$ of $R$. 

The situation is particularly nice if $R$ is $\MCM$--finite. Let $M_{0}=R, M_{1}, \ldots, M_{t}$ be representatives of 
the indecomposable objects of $\MCM(R)$. Their endomorphism algebra $\Aus(\MCM(R))=\End_{R}(\bigoplus_{i=0}^t M_{i})$ is called 
the \emph{Auslander algebra}. Auslander~{\cite[Theorem A.1]{Auslander84}} has shown that its global dimension is bounded above by the Krull dimension of $R$ (respectively by $2$ in Krull dimensions $0$ and $1$; for this case see also Auslander's treatment in \cite[Sections III.2 and III.3]{Auslander71}). Hence, $\Aus(\MCM(R))$ is a resolution of $R$.
 
 The next lemma motivates the definition  of the \emph{relative singularity categories}.

\begin{lemma}\label{L:embeds} 
There is a fully faithful triangle functor $K^b(\proj-R) \ra \cd^b(\mod-A)$.
\end{lemma}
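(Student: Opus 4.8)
The plan is to exhibit $K^b(\proj-R)$ as the essential image of a standard functor $\cd^b(\mod-R)\to\cd^b(\mod-A)$ and check it is fully faithful on that subcategory. First I would use that $F=R\oplus F'$ has $R=eF$ as a direct summand, so that $eA=eAe\oplus eAf\cong\Hom_R(F,R)$ and $Ae\cong\Hom_R(R,F)=F$ as $A$-modules, and the composition gives $eAe\cong\End_R(R)=R$. The functor to use is $?\lten_R Ae\colon \cd(\Mod-R)\to\cd(\Mod-A)$, or equivalently $?\lten_R \Hom_R(R,F)$; its right adjoint is $\RHom_A(Ae,?)=?\lten_A eA\colon\cd(\Mod-A)\to\cd(\Mod-R)$ (this is part of the recollement data in Proposition~\ref{p:recollement-from-projective-general-case}, with the roles of $A$ and $eAe=R$). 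Since $Ae$ is projective as a right $eAe=R$-module, $?\ten_R Ae$ is already exact, so no derived-functor subtleties arise, and it visibly sends $R\in\proj-R$ to $Ae\in\proj-A$, hence sends $K^b(\proj-R)$ into $K^b(\proj-A)\subseteq\cd^b(\mod-A)$.

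Next I would verify full faithfulness on $K^b(\proj-R)$. Because both $K^b(\proj-R)$ and the target are generated as thick subcategories by $R$ and its shifts, and both functors in the adjoint pair are triangulated, it suffices to check that the unit $R\to \RHom_A(Ae, R\lten_R Ae)=\RHom_A(Ae,Ae)$ is an isomorphism, i.e.\ that $\Hom_A(Ae,Ae)\cong R$ and $\Ext^i_A(Ae,Ae)=0$ for $i>0$. The first is exactly the identification $eAe\cong\End_R(R)=R$ noted above, and the vanishing of higher $\Ext$ is automatic since $Ae$ is a projective $A$-module. A standard dévissage argument (on the number of cones needed to build an object of $K^b(\proj-R)$ from copies of $R[n]$) then upgrades this to full faithfulness of $?\ten_R Ae$ on all of $K^b(\proj-R)$: the induced maps $\Hom_{\cd^b(\mod-R)}(R[m],R[n])\to\Hom_{\cd^b(\mod-A)}(Ae[m],Ae[n])$ are isomorphisms, and both sides are cohomological in each variable.

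Alternatively — and this is probably the cleanest route given what is already set up — I would simply invoke Corollary~\ref{c:restriction-and-induction}(a) with the idempotent $e$: there $i^*=?\lten_A B$ and one has a recollement whose leftmost term $\cd(B)$ has $H^0(B)\cong A/AeA$, and whose rightmost term is $\cd(eAe)=\cd(R)$, with $j_!=?\lten_{eAe}eA=?\lten_R eA$ fully faithful. The fully faithful functor I want is $j_!$ composed with the identification $\cd(R)=\cd(\Mod-R)$, restricted to $K^b(\proj-R)=\per(R)$; since $j_!$ is fully faithful on all of $\cd(R)$ and restricts to perfect complexes ($j_!(R)=eA\in\per(A)$), its restriction to $K^b(\proj-R)$ lands in $K^b(\proj-A)\subseteq\cd^b(\mod-A)$ and is fully faithful. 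One last check is that the image actually lies in $\cd^b(\mod-A)$ rather than merely $\cd(\Mod-A)$: this holds because $eA$ is a finitely generated projective $A$-module, so $j_!$ sends bounded complexes of finitely generated projective $R$-modules to bounded complexes of finitely generated projective $A$-modules.

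The main obstacle is really just bookkeeping: making sure the idempotent and the bimodule $Ae$ vs.\ $eA$ are used consistently with the conventions of Proposition~\ref{p:recollement-from-projective-general-case} (where the corner ring is $eAe$, here equal to $R$), and confirming $eAe\cong R$ as $k$-algebras — which is immediate from $R=eF$ being a direct summand of $F$ with $\End_R(R)=R$. No genuinely hard step arises; the substance has been front-loaded into the recollement machinery of Section~\ref{s:derived-cat}.
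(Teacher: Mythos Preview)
Your argument is correct (once the $Ae$/$eA$ swap in the first paragraph is fixed---your ``alternatively'' paragraph with $j_!=?\lten_R eA$ has it right, and you anticipated this bookkeeping issue yourself), but it is considerably heavier than what the paper actually does. The paper gives a three-line proof that never touches recollements or adjunctions: since $R$ is a summand of $F$, there is an additive embedding $\proj-R\subseteq\add_R F$; the projectivization functor $\Hom_R(F,?)\colon\add_R F\to\proj-A$ is an additive equivalence; composing and passing to bounded homotopy categories gives a fully faithful triangle functor $K^b(\proj-R)\to K^b(\proj-A)\subseteq\cd^b(\mod-A)$. That is the whole proof.

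Your route via $j_!$ from Proposition~\ref{p:recollement-from-projective-general-case} produces the same functor (both send $R\mapsto eA\cong\Hom_R(F,R)$) and has the advantage of immediately placing the embedding inside the recollement framework used later, but it front-loads machinery that is unnecessary for this particular lemma. The paper's approach, by contrast, is elementary enough to work for any additive generator $F$ of a Krull--Schmidt category with summand $P$, without any flatness or derived-functor considerations.
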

\begin{proof}
By the definition of $A$, there exists an idempotent $e \in A$ such that $R \cong eAe$. Moreover, Proposition \ref{p:recollement-from-projective-general-case} (a) yields a fully faithful functor
\begin{align}
-\lten_{eAe}eA \colon \cd(eAe) \longrightarrow \cd(A).
\end{align}
This functor descends to an embedding between the categories of compact objects
\begin{align}
-\lten_{eAe}eA\colon K^b(\proj-eAe) \longrightarrow K^b(\proj-A) 
\end{align}
since the image of the generator $eAe$ is the projective $A$-module $eA$.
Composing with the canonical embedding $K^b(\proj-A) \subseteq \cd^b(\mod-A)$ completes the proof.
\end{proof}

\begin{remark} \label{R:FCM}
If $A$ is Cohen-Macaulay when considered as an $R$-module, then $F'$ is a Cohen-Macaulay $R$-module. Indeed, we have $Ae \cong R \oplus F'$ and $A \cong Ae \oplus A(1-e)$ as $R$-modules. So $F'$ is an $R$-direct summand of $A$ and therefore Cohen--Macaulay.  

In this situation, $A$ is a modification algebra in the sense of Iyama \& Wemyss \cite{IyamaWemyss14}. In particular, Van den Bergh's NCCRs [89] are of this form, \confer \cite[Lemma 2.23]{IyamaWemyss14}.
\end{remark}

\begin{remark} Assume that $F'$ is a maximal Cohen--Macaulay $R$-module --- by Remark \ref{R:FCM} this is satisfied if $A$ is in $\MCM(R)$, for example if $A$ is an NCCR in the sense of Van den Bergh \cite{NCCR}. Then the fully faithful functor $j_*=\RHom_{eAe}(Ae, ?)$ in Proposition \ref{p:recollement-from-projective-general-case} (a) also restricts to an embedding $ K^b(\proj-eAe) \to \cd^b(\mod-A)$ --- namely, representing $A$ as a matrix algebra one can check that $j_*(eAe)=eA$ using the vanishing condition in \eqref{mcm}. One can check that this embedding coincides with the embedding in Lemma \ref{L:embeds}.
\end{remark}

\begin{definition}
Using the embedding in Lemma \ref{L:embeds}, we can define the \emph{relative singularity category} of the pair $(R, A)$ as the triangulated quotient category
\begin{align}\label{E:Def-rel-sing-cat} \Delta_{R}(A)=\frac{\cd^b(\mod-A)}{K^b(\proj-R)}. \end{align}
\end{definition}

\begin{remark}
As a projective $A$-module $eA$ has no self-extensions, therefore $\Delta_{R}(A) \cong \cd^b(\mod-A)/\thick(eA)$ is a relative singularity category in the sense of Chen \cite{Chen11}. Different notions of relative singularity categories were introduced and studied by Positselski \cite{Positselski11} and also by Burke \& Walker \cite{BurkeWalker12}. We thank Greg Stevenson for bringing this unfortunate coincidence to our attention. 
\end{remark}

Let $G'$ be another finitely generated $R$-module, which contains $F'$ as a direct summand. As above, we define $G=R \oplus G'$, $A'=\End_{R}(G)$ and $e'=\id_{R} \in A'$. 

We compare the relative singularity categories of $A$ and $A'$ respectively.

\begin{proposition}\label{P:Embedding-of-relative}
If $A$ has finite global dimension, then there is a fully faithful triangle functor 
\begin{align}
\Delta_{R}(A) \longrightarrow \Delta_{R}(A').
\end{align}
\end{proposition}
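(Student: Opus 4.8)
The plan is to realize $\cd^b(\mod-A)$ as an admissible subcategory of $\cd^b(\mod-A')$ in a way compatible with the embeddings of $K^b(\proj-R)$, so that passing to the Verdier quotients yields the desired fully faithful functor. First I would note that, since $F'$ is a direct summand of $G'$, there is an idempotent $\varepsilon \in A' = \End_R(G)$ with $\varepsilon A' \varepsilon \cong \End_R(F) = A$; concretely $\varepsilon$ is the projection of $G = R \oplus G'$ onto the summand $F = R \oplus F'$. Moreover the idempotent $e = \id_R \in A$ corresponds under this isomorphism to $e' = \id_R \in A'$, i.e. $e = \varepsilon e' \varepsilon$ and in fact $A'e' = A'\varepsilon e' = \ldots$, so the embedding $K^b(\proj-R) \hookrightarrow \cd^b(\mod-A)$ of Lemma~\ref{L:embeds} is intertwined with the corresponding one for $A'$ via the functor $? \lten_{A} \varepsilon A' \colon \cd^b(\mod-A) \to \cd^b(\mod-A')$.

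The key point is that this induction functor is fully faithful on bounded derived categories. This is where the hypothesis that $A$ is a \emph{resolution} (i.e. $\gldim A < \infty$) enters: the functor $? \lten_{A} \varepsilon A'$ is left adjoint to $\RHom_{A'}(\varepsilon A', ?) = ? \lten_{A'} A'\varepsilon \colon \cd(\Mod-A') \to \cd(\Mod-A)$, and the unit $\id \to (? \lten_A \varepsilon A') \lten_{A'} A'\varepsilon$ is an isomorphism because $\varepsilon A' \ten_{A'} A' \varepsilon = \varepsilon A' \varepsilon \cong A$ and $\varepsilon A'$ is projective as a right $A'$-module (so no higher Tor terms appear). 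Hence $? \lten_A \varepsilon A'$ is fully faithful on all of $\cd(\Mod-A)$. To see that it restricts to $\cd^b(\mod-A)$ with image landing in $\cd^b(\mod-A')$, one uses that $\varepsilon A'$ is finitely generated projective over $A'$ and finitely generated over $A$, together with $\gldim A < \infty$, which guarantees that every object of $\cd^b(\mod-A)$ is perfect, hence has a finite projective resolution, which $? \lten_A \varepsilon A'$ carries to a bounded complex of finitely generated projective $A'$-modules.

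It then remains to pass to the quotients. Writing $\Phi = ? \lten_A \varepsilon A' \colon \cd^b(\mod-A) \to \cd^b(\mod-A')$, I have observed $\Phi(\thick(eA)) \subseteq \thick(e'A')$ (indeed $\Phi(eA) = eA \ten_A \varepsilon A' = e\varepsilon A' \cong e'A'$, using $e = \varepsilon e'\varepsilon$ appropriately, or more simply tracking the identification $eA \cong \Hom_R(F,R)$ and $e'A' \cong \Hom_R(G,R)$ under $\Hom_R(F,?)\text{-induction}$). Therefore $\Phi$ descends to a triangle functor $\ol{\Phi}\colon \Delta_R(A) = \cd^b(\mod-A)/\thick(eA) \to \cd^b(\mod-A')/\thick(e'A') = \Delta_R(A')$. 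Full faithfulness of $\ol\Phi$ follows because $\Phi$ is fully faithful and admits the right adjoint $\Psi = \RHom_{A'}(\varepsilon A', ?)$ with $\Psi \Phi \cong \id$: for $X, Y \in \cd^b(\mod-A)$ a morphism $\Phi X \to \Phi Y$ in $\Delta_R(A')$ is represented by a roof through an object of $\thick(e'A')$, and applying $\Psi$ (which sends $\thick(e'A')$ into $\thick(eA)$, since $\Psi(e'A') = \RHom_{A'}(\varepsilon A', e'A') \cong e'A'\varepsilon \cong eA$) produces a morphism $X \to Y$ in $\Delta_R(A)$ whose image under $\ol\Phi$ is the original one; combined with faithfulness this gives bijectivity on Hom-spaces. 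The main obstacle I anticipate is the bookkeeping needed to verify cleanly that $\Psi$ preserves the relevant thick subcategories and that the unit/counit of the adjunction behave well modulo these subcategories — this is the kind of argument that is conceptually routine but where one must be careful that $\Psi\Phi \cong \id$ really holds on the nose on $\cd^b(\mod-A)$ and not merely after localization.
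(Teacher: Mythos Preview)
Your approach is correct and essentially the same as the paper's: the functor $?\lten_A \varepsilon A'$ is precisely the embedding $K^b(\proj-A)\hookrightarrow K^b(\proj-A')\subseteq\cd^b(\mod-A')$ induced by the additive inclusion $\add F\subseteq\add G$, and the hypothesis $\gldim A<\infty$ is used exactly to identify $\cd^b(\mod-A)$ with $K^b(\proj-A)$.

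One small point: your description of morphisms in $\Delta_R(A')$ as ``roofs through an object of $\thick(e'A')$'' is not accurate --- it is the \emph{cone} of one leg of the roof that lies in $\thick(e'A')$, not the middle object. Your adjunction idea can still be made to work (the pair $(\Phi,\Psi)$ descends to the quotients and the unit remains an isomorphism), and you rightly flagged this as the step needing care. The paper sidesteps this bookkeeping entirely by observing the stronger fact $\Phi(\thick(eA))=\thick(e'A')$ (equality, not just inclusion, since $\Phi(eA)\cong e'A'$); once the subcategory being killed lies wholly in the essential image of the fully faithful functor $\Phi$, full faithfulness on the Verdier quotient is immediate.
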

\begin{proof}
There is an idempotent $f \in A'$ such that $A \cong fA'f$. This yields a commutative diagram
\begin{align}
\begin{array}{c}
\begin{xy}
\SelectTips{cm}{}
\xymatrix{
&&    \ar[lld]_{\displaystyle -\otimes_{e'A'e'}e'A'f \qquad \, \, \, }     K^b(\proj-e' A' e') \ar[rrd]^{\displaystyle \qquad -\otimes_{e'A'e'}e'A'} \\
K^b(\proj-fA'f) \ar[rrrr]^{\displaystyle - \otimes_{fA'f}fA'}   &&&&     K^b(\proj-A')
}
\end{xy}
\end{array}
\end{align} 
The two `diagonal' functors are the embeddings from Lemma \ref{L:embeds} and the horizontal functor is fully faithful by the same argument. Since $fA'f$ has finite global dimension, this functor yields an embedding $\cd^b(\mod-fA'f) \ra \cd^b(\mod-A')$. Passing to the triangulated quotient categories yields the claim.
\end{proof}

\begin{remark}
We show that the assumption on the global dimension of $A$ is necessary: consider the nodal curve singularity
$A=R=k\llbracket x, y \rrbracket/(xy)$ and its Auslander algebra $A'=\End_{R}(R \oplus k\llbracket x \rrbracket \oplus k\llbracket y \rrbracket)$. In this situation Proposition \ref{P:Embedding-of-relative} would yield an embedding $\underline{\MCM}(R)=\Delta_{R}(R) \rightarrow \Delta_{R}(A').$ But, $\underline{\MCM}(R)$ contains an indecomposable object $X$ with $X \cong X[2s] $ for all $s \in \mathbb{Z}$. Whereas, $\Delta_{R}(A')$ does not contain such objects by the explicit description obtained in \cite[Section 4]{BurbanKalck11}. Contradiction. 
\end{remark}

Without restriction we may assume that $F'$ has no projective direct summands.

\begin{proposition}\label{C:From-gen-to-class}
Let $A$ be a partial resolution. There exists a triangle equivalence
\begin{align}
\frac{\Delta_{R}(A)}{\thick_{\cd^b(\mod-A)}(\mod-A/AeA)} \longrightarrow \ul{\MCM}(R).
\end{align}
\end{proposition}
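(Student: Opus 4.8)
The plan is to read off this statement from Proposition~\ref{P:TwoIdempotents}, applied with the second idempotent equal to $e$, and then to replace the resulting classical singularity category by $\ul{\MCM}(R)$ via Buchweitz's theorem. So almost nothing genuinely new needs to be proved; the work is in matching up the two formulations.

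First I would check the hypotheses and translate the two sides. Since $F$ is a finitely generated module over the complete commutative Noetherian ring $R$, the endomorphism ring $A=\End_R(F)$ is module-finite over $R$, hence right (and left) Noetherian, so Proposition~\ref{P:TwoIdempotents} applies with the pair of idempotents $e$ and $f:=e$. On the left, $fA=eA$, so $\cd^b(\mod-A)/\thick(fA)=\cd^b(\mod-A)/\thick(eA)$, which is $\Delta_R(A)$ because the embedding of Lemma~\ref{L:embeds} sends $R$ to $\Hom_R(F,R)\cong eA$, identifying $K^b(\proj-R)$ with $\thick(eA)$ (Remark after~\eqref{E:Def-rel-sing-cat}). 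On the right, $eAe=\End_R(R)\cong R$, and under this identification $\cd^b(\mod-eAe)=\cd^b(\mod-R)$, while $fAe=eAe$ corresponds to the free module $R_R$, so $\thick(fAe)=\thick(R_R)=K^b(\proj-R)$. Hence Proposition~\ref{P:TwoIdempotents} produces a triangle equivalence, induced by $\Hom_A(eA,-)$,
$$\frac{\Delta_R(A)}{\thick\big(q(\mod-A/AeA)\big)}\;\stackrel{\sim}{\longrightarrow}\;\frac{\cd^b(\mod-R)}{K^b(\proj-R)}=\cd_{sg}(R),$$
where $q\colon\cd^b(\mod-A)\to\Delta_R(A)$ is the quotient functor.

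It remains to identify the denominator on the left with $\cd^b_{A/AeA}(\mod-A)$ and to rewrite the right-hand side. A d\'evissage along the standard truncations shows that $\cd^b_{A/AeA}(\mod-A)=\thick_{\cd^b(\mod-A)}(\mod-A/AeA)$, since a bounded complex all of whose cohomology is annihilated by $AeA$ is built out of its cohomology modules; thus the image of $\cd^b_{A/AeA}(\mod-A)$ in $\Delta_R(A)$ generates precisely the thick subcategory $\thick(q(\mod-A/AeA))$ above. Since iterated Verdier quotients compose, both $\Delta_R(A)/\cd^b_{A/AeA}(\mod-A)$ and $\Delta_R(A)/\thick(q(\mod-A/AeA))$ coincide with $\cd^b(\mod-A)\big/\thick\big(eA\cup(\mod-A/AeA)\big)$, so no care about thick closures is needed. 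Finally, $R$ is Gorenstein, so Buchweitz's theorem gives a triangle equivalence $\cd_{sg}(R)\cong\ul{\MCM}(R)$. Composing, we obtain the asserted equivalence $\Delta_R(A)/\cd^b_{A/AeA}(\mod-A)\stackrel{\sim}{\longrightarrow}\ul{\MCM}(R)$.

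Because the substantive content — the abelian localization $\mod-A\big/(\mod-A/AeA)\cong\mod-eAe$, its compatibility with the passage to derived categories, and Lemma~\ref{L:Verdier} — is already packaged inside Proposition~\ref{P:TwoIdempotents}, there is no real obstacle here; the only point calling for a word of care is the bookkeeping of the previous paragraph, namely that the subcategory being killed in $\Delta_R(A)$ is exactly the image of $\cd^b_{A/AeA}(\mod-A)$.
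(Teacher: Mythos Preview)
Your proof is correct and follows essentially the same route as the paper: apply Proposition~\ref{P:TwoIdempotents} with $f=e$, identify $eAe\cong R$ and $\thick(fAe)=K^b(\proj-R)$, and then invoke Buchweitz's equivalence $\cd_{sg}(R)\cong\ul{\MCM}(R)$. The only cosmetic difference is in the last bookkeeping step: the paper notes that $q$ is fully faithful on $\thick_{\cd^b(\mod-A)}(\mod-A/AeA)$ (because there are no nonzero maps from $\thick(eA)$ into it), whereas you appeal to composability of Verdier quotients; both arguments settle the identification of the denominator.
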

\begin{proof}
Buchweitz has shown that there exists an equivalence of triangulated categories $\ul{\MCM}(R) \cong \cd_{sg}(R)$ \cite{Buchweitz87}. We have an isomorphism of rings $R \cong eAe$. Hence, the special case $f=e$ of Proposition \ref{P:TwoIdempotents} yields a triangle equivalence
\begin{align} \label{E:OneIdemp}
\frac{\cd^b(\mod-A)/\thick(eA)}{\thick(q(\mod-A/AeA))} \longrightarrow \cd_{sg}(R).
\end{align}
Verdier's \cite[Proposition II.2.3.3]{Verdier} implies that 
\[
q\colon \cd^b(\mod-A) \to \cd^b(\mod-A)/\thick(eA)
\]
induces a (canonical) triangle equivalence
\begin{align}
\thick_{\cd^b(\mod-A)}(\mod-A/AeA) \cong \thick_{\Delta_{R}(A)}(q(\mod-A/AeA)),
\end{align} since there are no non-trivial morphisms from $\thick(eA)$ to $\thick(\mod-A/AeA)$. In the sequel, we identify these two categories via this equivalence. 
\end{proof}

Therefore there is a quotient functor from the relative singularity category $\Delta_{R}(A)$ to the singularity category $\ul{\MCM}(R)$ . 
We want to give an intrinsic description of its kernel $\thick_{\cd^b(\mod-A)}(\mod-A/AeA)$ inside $\Delta_{R}(A)$. We need some preparation. 

\begin{proposition}\label{P:Intrinsic}
In the notations of Propositions \ref{p:recollement-from-projective-general-case} and  \ref{C:From-gen-to-class} assume additionally that $A$ has finite global dimension and $A/AeA$ is finite-dimensional. 

Then there exists a non-positive dg algebra $B$ and a commutative diagram 
\begin{align}
\begin{array}{c}
\begin{xy}
\SelectTips{cm}{}
\xymatrix{ \thick_{\cd^b(\mod-A)}(\mod-A/AeA) \ar@{^{(}->}[r] & \Delta_{R}(A) \ar[d]_\cong^{\displaystyle i^{*}} \ar@{->>}[r]   & \ul{\MCM}(R)   \ar[d]_\cong^{\displaystyle\I} \\
\cd_{fd}(B) \ar[u]^\cong_{\displaystyle i_{*}}  \ar@{^{(}->}[r]  & \per(B) \ar@{->>}[r]  & \per(B)/\cd_{fd}(B) }
\end{xy}
\end{array}
\end{align}
where the horizontal arrows denote (functors induced by) the canonical inclusions and projections respectively. Finally, the triangle functor $\I$ is induced by $i^*$.
\end{proposition}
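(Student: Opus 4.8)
The plan is to assemble the diagram from three ingredients already available: the recollement of Proposition~\ref{p:recollement-from-projective-general-case}, the two equivalences of Corollary~\ref{c:restriction-and-induction}, and the quotient presentation of $\ul{\MCM}(R)$ from Proposition~\ref{C:From-gen-to-class}. First, apply Proposition~\ref{p:recollement-from-projective-general-case} to the $k$-algebra $A$ and the idempotent $e$: this yields the non-positive dg algebra $B$ with $H^0(B)\cong A/AeA$, a morphism of dg algebras $f\colon A\to B$, and the recollement of $\cd(A)$ by $\cd(B)$ and $\cd(eAe)$, with $i^*=?\lten_A B$ and $i_*$ the restriction along $f$. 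Since $A$ has finite global dimension we have $\cd^b(\mod-A)=K^b(\proj-A)=\per(A)$, so $\Delta_R(A)=\cd^b(\mod-A)/\thick(eA)$ coincides with $K^b(\proj-A)/\thick(eA)$; moreover $\Delta_R(A)$ is idempotent complete (see Remark~\ref{r:CompInterprOfRelSingCat} and \cite[Section~3]{BurbanKalck11}), so Corollary~\ref{c:restriction-and-induction}(a) upgrades to a genuine triangle equivalence $i^*\colon\Delta_R(A)\xrightarrow{\sim}\per(B)$, which is the middle vertical arrow of the diagram.

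Next I would pin down the left-hand subcategories together with the identifications among their various incarnations, and check commutativity of the left square. Because $A$ has finite global dimension, every finitely generated $A$-module lies in $\per(A)$; in particular $\mod-A/AeA\subseteq\per(A)$, and since $A/AeA$ is finite-dimensional, $\fdmod-A/AeA=\mod-A/AeA$. By d\'evissage along the standard truncation triangles one gets $\cd^b_{A/AeA}(\mod-A)=\thick_{\cd^b(\mod-A)}(\mod-A/AeA)$, and as $\cd^b(\mod-A)=\per(A)$ is thick in $\cd(A)$ this equals $\thick_{\cd(A)}(\mod-A/AeA)=\thick_{\cd(A)}(\fdmod-A/AeA)=\cd_{fd,A/AeA}(A)$, the last identity being part of Corollary~\ref{c:restriction-and-induction}(b). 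That corollary also supplies the triangle equivalence $i_*\colon\cd_{fd}(B)\xrightarrow{\sim}\cd_{fd,A/AeA}(A)=\thick_{\cd^b(\mod-A)}(\mod-A/AeA)$, the left vertical arrow. For commutativity of the left square: $i_*$ is fully faithful, so $i^*\circ i_*\cong\id_{\cd(B)}$; since $i_*\cd_{fd}(B)=\cd_{fd,A/AeA}(A)\subseteq\per(A)$ and $i^*=?\lten_A B$ carries $\per(A)$ into $\per(B)$, we conclude in particular that $\cd_{fd}(B)\subseteq\per(B)$, and that the composite $\cd_{fd}(B)\xrightarrow{i_*}\thick_{\cd^b(\mod-A)}(\mod-A/AeA)\hookrightarrow\Delta_R(A)\xrightarrow{i^*}\per(B)$ is naturally isomorphic to the inclusion. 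Here one also uses, exactly as in the proof of Proposition~\ref{C:From-gen-to-class}, that the functor $\thick_{\cd^b(\mod-A)}(\mod-A/AeA)\to\Delta_R(A)$ is fully faithful, because there are no nonzero morphisms in $\cd^b(\mod-A)$ from $\thick(eA)$ to $\thick(\mod-A/AeA)$.

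Finally, for the right square, define $\I$ as the functor induced by $i^*$ on Verdier quotients. By Proposition~\ref{C:From-gen-to-class} the kernel of the quotient functor $\Delta_R(A)\twoheadrightarrow\ul{\MCM}(R)$ is $\cd^b_{A/AeA}(\mod-A)$, which $i^*$ maps --- by the left square --- equivalently onto $\cd_{fd}(B)$, the kernel of $\per(B)\twoheadrightarrow\per(B)/\cd_{fd}(B)$. Hence the composite of $i^*$ with $\per(B)\twoheadrightarrow\per(B)/\cd_{fd}(B)$ annihilates $\cd^b_{A/AeA}(\mod-A)$ and factors, by the universal property of the Verdier quotient, through a triangle functor $\I\colon\ul{\MCM}(R)\to\per(B)/\cd_{fd}(B)$; since $i^*$ is an equivalence matching up the two kernels, $\I$ is an equivalence, and the right square commutes by construction. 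The argument is thus essentially formal once the web of identifications $\cd_{fd}(B)\simeq\cd_{fd,A/AeA}(A)=\thick_{\cd(A)}(\mod-A/AeA)=\thick_{\cd^b(\mod-A)}(\mod-A/AeA)=\cd^b_{A/AeA}(\mod-A)$ has been established; this bookkeeping --- which relies squarely on the two extra hypotheses $\gldim A<\infty$ and $\dim_k A/AeA<\infty$ --- is the only point requiring genuine care.
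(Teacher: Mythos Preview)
Your proof is correct and follows essentially the same approach as the paper: both arguments obtain $B$ from the recollement of Proposition~\ref{p:recollement-from-projective-general-case}, use finite global dimension together with idempotent completeness of $\Delta_R(A)$ to turn Corollary~\ref{c:restriction-and-induction}(a) into a genuine equivalence $i^*\colon\Delta_R(A)\xrightarrow{\sim}\per(B)$, invoke Corollary~\ref{c:restriction-and-induction}(b) for the left vertical equivalence, verify the left square via $i^*\circ i_*\cong\id_{\cd(B)}$, and define $\I$ by the universal property of the Verdier quotient. Your version is considerably more detailed in spelling out the chain of identifications $\cd_{fd}(B)\simeq\cd_{fd,A/AeA}(A)=\thick_{\cd(A)}(\mod-A/AeA)=\thick_{\cd^b(\mod-A)}(\mod-A/AeA)=\cd^b_{A/AeA}(\mod-A)$ and in justifying why $\cd_{fd}(B)\subseteq\per(B)$, points the paper treats rather tersely.
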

\begin{proof}
Firstly, $\Delta_{R}(A)$ is idempotent complete: using Schlichting's negative K-Theory for triangulated categories \cite{Schlichting06} this may be deduced from the idempotent completeness of $\ul{\MCM}(R)$ (see \cite[Theorem 3.2.]{BurbanKalck11}). Since $A$ has finite global dimension Corollary \ref{c:restriction-and-induction} implies the existence of a dg $k$-algebra $B$ with $i^*\colon \cd^b(\mod-A)/\thick(eA) \cong \per(B)$. Moreover, since $\dim_{k}(A/AeA)$ is finite $i_{*}\colon \cd_{fd}(B) \cong \thick(\mod-A/AeA)$ by the same corollary. The inclusion 
$\thick_{\cd^b(\mod-A)}(\mod-A/AeA) \hookrightarrow \Delta_{R}(A)$ is induced by the inclusion $\mod-A/AeA \hookrightarrow \mod-A$ (see the proof of Proposition \ref{C:From-gen-to-class}). Since $i_{*}$ and $i^*$ are part of a recollement (Proposition \ref{p:recollement-from-projective-general-case}) we obtain $i^*\circ i_{*}=\id_{\cd(B)}$. 
Hence, the first square commutes. The second square commutes by definition of $\I$. 
\end{proof}
Note that under the assumptions of Proposition~\ref{P:Intrinsic}, we have equalities
\begin{align}\label{E:Dfd}
\cd_{fd,A/AeA}(A)=\thick_{\cd^b(\mod-A)}(\mod-A/AeA)=\cd^b_{A/AeA}(\mod-A).
\end{align}
Moreover, combining this Proposition with Proposition \ref{p:hom-finiteness-of-per} yields the following.
\begin{proposition}\label{p:hom-finiteness-of-delta}
In the setup of Prop. \ref{P:Intrinsic}, the category $\Delta_{R}(A)$ is $\Hom$--finite.
\end{proposition}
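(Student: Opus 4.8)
The plan is to combine the identification of $\Delta_R(A)$ with a perfect derived category of a well-behaved dg algebra (Proposition \ref{P:Intrinsic}) with the Hom-finiteness criterion of Proposition \ref{p:hom-finiteness-of-per}. Concretely, I would first invoke Proposition \ref{P:Intrinsic}: under the standing hypotheses there ($A$ has finite global dimension and $A/AeA$ is finite-dimensional over $k$), there is a non-positive dg $k$-algebra $B$ together with a triangle equivalence $i^*\colon \Delta_R(A) \stackrel{\sim}{\longrightarrow} \per(B)$. Since Hom-finiteness is invariant under triangle equivalence, it suffices to show that $\per(B)$ is Hom-finite.

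Next I would verify that $B$ satisfies the three hypotheses of Proposition \ref{p:hom-finiteness-of-per}. The first, $B^i = 0$ for $i > 0$, is exactly the statement that $B$ is non-positive, which is part of the conclusion of Proposition \ref{P:Intrinsic} (and ultimately of Proposition \ref{p:recollement-from-projective-general-case}(b), taking $B = \sigma^{\le 0}B'$). The second, that $H^0(B)$ is finite-dimensional, follows from the isomorphism $H^0(B) \cong A/AeA$ (Proposition \ref{p:recollement-from-projective-general-case}(c)) together with the hypothesis that $\dim_k(A/AeA) < \infty$. The third, $\cd_{fd}(B) \subseteq \per(B)$, is where I would have to do a little work: by Corollary \ref{c:restriction-and-induction}(b), $i_*$ gives an equivalence $\cd_{fd}(B) \cong \cd_{fd, A/AeA}(A) = \thick_{\cd(A)}(\fdmod-A/AeA)$, and since $A$ has finite global dimension every finitely generated (hence every finite-dimensional) $A$-module lies in $\per(A) = K^b(\proj-A)$; transporting back along $i^*$ (which is a quasi-inverse to $i_*$ on the relevant subcategories, using $i^* i_* = \id$ from the recollement) shows $\cd_{fd}(B) \subseteq \per(B)$. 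Alternatively, one can argue directly from Theorem \ref{t:fractionally-cy-property}(a): all finite-dimensional $A/AeA$-modules have finite projective dimension over $A$, hence lie in $K^b(\proj-A)$, which is precisely the statement $\cd_{fd,A/AeA}(A) \subseteq \per(A)$, and this transports to $\cd_{fd}(B)\subseteq\per(B)$ under $i_*$, $i^*$.

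With all three hypotheses checked, Proposition \ref{p:hom-finiteness-of-per} gives that $H^i(B)$ is finite-dimensional for every $i$ and that $\per(B)$ is Hom-finite. Pulling back along the equivalence $i^*$ shows $\Delta_R(A)$ is Hom-finite, completing the proof. I do not expect any genuine obstacle here: the proposition is essentially a bookkeeping corollary assembling Proposition \ref{P:Intrinsic}, Corollary \ref{c:restriction-and-induction}, Theorem \ref{t:fractionally-cy-property}(a) and Proposition \ref{p:hom-finiteness-of-per}. The only point requiring a moment's care is confirming the inclusion $\cd_{fd}(B) \subseteq \per(B)$, i.e.~that the finite projective dimension of $A/AeA$-modules over $A$ is correctly transported through the recollement equivalences; this is immediate from the compatibility of $i_*$ and $i^*$ established in Proposition \ref{p:recollement-from-projective-general-case} and Corollary \ref{c:restriction-and-induction}.
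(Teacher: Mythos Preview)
Your proposal is correct and follows exactly the approach the paper takes: the paper's proof is literally the one-line remark that combining Proposition \ref{P:Intrinsic} with Proposition \ref{p:hom-finiteness-of-per} yields the result, and you have simply spelled out the verification of the three hypotheses of Proposition \ref{p:hom-finiteness-of-per}. One small remark: the inclusion $\cd_{fd}(B)\subseteq\per(B)$ is already displayed in the commutative diagram of Proposition \ref{P:Intrinsic}, so no extra transport argument is needed; and your alternative route via Theorem \ref{t:fractionally-cy-property}(a) is superfluous (and in fact requires the extra hypothesis of $d$-almost split sequences, which is not part of the setup of Proposition \ref{P:Intrinsic}).
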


\begin{definition} \label{D:Homolog-fin}
For a triangulated category $\ct$ the full triangulated subcategory 

\begin{equation*}
\ct_{rhf}= \left\{ X \in \ct \, \left| \,\,  \Hom_{\ct}(Y, X[i])=0  \text{  for all  } Y \in \ct \text{  and all but finitely many } i \in \mathbb{Z} \right\}\right. \end{equation*}
is called \emph{subcategory of right homologically finite objects}. Dually one defines the subcategory $\ct_{lhf}$Ê of \emph{left homologically finite objects}, see also \cite[Definition 1.6.]{Orlov09}. 
\end{definition}

\begin{example} \label{E:Homolog-fin}
Let $B$ be a dg $k$-algebra such that $\per(B)$ is Hom-finite over $k$, then $\per(B)_{rhf}=\cd_{fd}(B) \cap \per(B)$ since $\Hom(B, X[i]) \cong H^i(X)$ for any dg $B$-module $X$.  In particular, if $\cd_{fd}(B) \subseteq \per(B)$, then $\per(B)_{rhf}=\cd_{fd}(B)$.
\end{example}

\begin{corollary}\label{C:Intrinsic}
In the setup of Proposition \ref{P:Intrinsic}  there is an equality 
\begin{align} 
\thick_{\cd^b(\mod-A)}(\mod-A/AeA)=\Delta_{R}(A)_{rhf},
\end{align}
where we identify $\thick_{\cd^b(\mod-A)}(\mod-A/AeA)$ with its image in $\Delta_{R}(A)$.
\end{corollary}
\begin{proof} One can check that subcategories of right homologically finite objects are identified under triangle equivalences. Therefore, Proposition \ref{P:Intrinsic} shows that  $i^*$ induces an equivalence $\Delta_{R}(A)_{rhf} \cong \per(B)_{rhf}$. By Corollary \ref{C:Hom-finite} $\per(B)$ is Hom-finite and $\cd_{fd}(B) \subseteq \per(B)$. So  Example \ref{E:Homolog-fin} yields $\per(B)_{rhf}=\cd_{fd}(B)$. Using Proposition \ref{P:Intrinsic} again, we see that also 
$i^*(\thick_{\cd^b(\mod-A)}(\mod-A/AeA))=\cd_{fd}(B)=\per(B)_{rhf}$. This finishes the proof.
\end{proof}

\subsection{Intermezzo: Orlov's singularity category for $\Delta_R(A)$}

Orlov defines a singularity category for any triangulated category $\ct$ as follows (\cite[Definition 1.7.]{Orlov09})
\begin{align}
\ct_{sg}:=\ct/\ct_{lhf}.
\end{align}
If $X$ is a separated Noetherian $k$-scheme of finite Krull dimension having enough locally free sheaves, then 
Orlov's \cite[Proposition 1.11.]{Orlov09} shows
\begin{align}
\cd^b(\Coh X)_{sg} \cong \cd_{sg}(X). 
\end{align}
We show that Orlov's singularity category of the relative singularity category $\Delta_R(A)$ is equivalent to the classical singularity category as well

\begin{proposition}\label{R:Orlov}
Let $(A, R)$ be a pair of the following form
\begin{itemize}
\item[(a)] $A$ is the Auslander algebra of an MCM-finite Gorenstein singularity $R$;
\item[(b)] $A$ is the Auslander algebra of a representation-finite selfinjective algebra $R$;
\item[(c)] $A=\End(R \oplus M)$ is an NCCR, where $R$ is an isolated Gorenstein singularity of Krull dimension $d \geq 2$.
\end{itemize}
Then  $\Delta_R(A)_{lhf}=\thick_{\cd^b(\mod-A)}(\mod-A/AeA)=\Delta_R(A)_{rhf}$ inducing an equivalence of triangulated categories
\begin{align}
\Delta_R(A)_{sg} \cong \cd_{sg}(R).
\end{align}
\end{proposition}

\begin{proof}
Our assumptions on $(A, R)$, imply the existence of $n$-almost split sequences and therefore allow us to apply Theorem \ref{t:fractionally-cy-property} later. Namely, for the cases (a), (b) the existence of $1$-almost split sequences (=AR-sequences) on $\MCM(R)$ and $\mod-R$ respectively is well-known. For (c), Iyama's \cite[Theorem 5.2.1]{Iyama07} shows that $R \oplus M \in \MCM(R)$ is maximal $(d-2)$-orthogonal and therefore $\add_R(R \oplus M)$ has $(d-1)$-almost split sequences by \cite[Theorem 3.3.1]{Iyama07a}.

Moreover, under our assumptions, the algebra $A$ has finite global dimension and $A/AeA$ is finite-dimensional. So by Corollary~\ref{C:Hom-finite}, the dg algebra $B$ has finite-dimensional cohomologies in each degree and we can apply Proposition \ref{P:Intrinsic} and Corollary \ref{C:Intrinsic} to reduce the statement to the equality
$\cd_{fd}(B)=\per(B)_{lhf}$.

We prove $\cd_{fd}(B)\subseteq\per(B)_{lhf}$. Recall from Proposition~\ref{p:recollement-from-projective-general-case} and Corollary~\ref{c:restriction-and-induction} that $i_*:\cd(B)\rightarrow \cd(A)$ is fully faithful and the essential image of $\cd_{fd}(B)$ under $i_*$ is $\thick_{\cd^b(\mod-A)}(\mod-A/AeA)$, which is invariant under the Nakayama functor $\nu_A$ on $\cd(A)$ by the proof of Theorem~\ref{t:fractionally-cy-property}. Thus for $X\in\cd_{fd}(B)$, there exists $Z\in\cd_{fd}(B)$ such that $\nu_A(i_*(X))\cong i_*(Z)$. So for any $Y\in\per(B)$ the spaces 
\begin{align*}
\bigoplus_{p\in\mathbb{Z}}D\Hom_{\cd(B)}(X,Y[p])&\cong\bigoplus_{p\in\mathbb{Z}}D\Hom_{\cd(A)}(i_*(X),i_*(Y)[p])\\
&\cong\bigoplus_{p\in\mathbb{Z}}\Hom_{\cd(A)}(i_*(Y),\nu_A(i_*(X))[p])\\
&\cong\bigoplus_{p\in\mathbb{Z}}\Hom_{\cd(A)}(i_*(Y),i_*(Z)[p])\\
&\cong\bigoplus_{p\in\mathbb{Z}}\Hom_{\cd(B)}(Y,Z[p])
\end{align*}
are finite-dimensional, showing that $X\in\per(B)_{lhf}$.

To show the equality $\cd_{fd}(B)=\per(B)_{lhf}$, we claim that the Nakayama functor $\nu_B$ on $\cd(B)$ restricts to the Serre functor on $\cd_{fd}(B)$, which (using the equivalence $i^*$) exists by Theorem \ref{t:fractionally-cy-property} (b). We have the following chain of implications
\begin{align*}
X \in \per(B)_{lhf} &\Rightarrow  \, \bigoplus_{p\in\mathbb{Z}}\Hom_{\per(B)}(X, B[p]) \, \text{is finite-dimensional} \\
&\hspace{-2.9pt}\overset{\eqref{E:AR-formula}}{\Rightarrow} \, \bigoplus_{p\in\mathbb{Z}}H^p(\nu_B(X))=\bigoplus_{p\in\mathbb{Z}}\Hom_{\per(B)}(B, \nu_B(X)[p])\, \text{is finite-dimensional}\\
&\Rightarrow \, \nu_B(X) \in \cd_{fd}(B).
\end{align*}
Since $\cd_{fd}(B) \subseteq \per(B)_{lhf}$ this chain of implications shows that $\nu_B(\cd_{fd}(B)) \subseteq \cd_{fd}(B)$ and therefore by \eqref{E:AR-formula} that $\nu_B$ restricts a right Serre functor on 
$\cd_{fd}(B)$. Since  $\cd_{fd}(B)$ has a Serre functor every right Serre functor is actually a Serre functor by Yoneda's Lemma. This completes the proof of the claim.

Because $B$ has finite-dimensional cohomologies in each degree, $\nu_B$ restricts to a triangle equivalence (see Section~\ref{ss:nakayama-functor})
\[
\nu_B: \per(B)\rightarrow\thick(D(B)),
\]
which, in conjunction with the above chain of implications, shows that $\nu_B$ restricts to a fully faithful functor
\[
\nu_B:\per(B)_{lhf}\rightarrow\cd_{fd}(B).
\]
But, as shown above, the composition
\[
\cd_{fd}(B)\hookrightarrow\per(B)_{lhf}\stackrel{\nu_B}{\rightarrow}\cd_{fd}(B)
\]
is the Serre functor on $\cd_{fd}(B)$, in particular, it is an equivalence. So the inclusion $\cd_{fd}(B)\hookrightarrow\per(B)_{lhf}$ has to be an equality.
\end{proof}

\subsection{Main result}

Now, we are able to state and prove the main result of this article. In particular, it applies to the ADE--singularities, which are listed above. 

\begin{theorem}\label{t:Classical-versus-generalized-singularity-categories}
If $R$ and $R'$ are $\MCM$--finite and $A=\Aus(\MCM(R))$ respectively $A'=\Aus(\MCM(R'))$ denote the Auslander algebras, then the following are equivalent.
\begin{itemize}
\item[(a)] There exists an additive equivalence $\ul{\MCM}(R) \cong \ul{\MCM}(R')$, which respects the action of the respective Auslander--Reiten translations on objects.
\item[(b)] There is an equivalence $\ul{\MCM}(R) \cong \ul{\MCM}(R')$ of triangulated categories.
\item[(c)] There exists a triangle equivalence $\Delta_{R}\!\left(A\right) \cong \Delta_{R'}\!\left(A'\right)$.
\end{itemize}
Moreover, the implication $[(c) \Rightarrow (b)]$ (and hence also $[(c) \Rightarrow (a)]$) holds under much weaker assumptions. Namely, if $A$ and $A'$ are non-commutative resolutions of isolated Gorenstein singularities $R$ and $R'$ respectively.
\end{theorem}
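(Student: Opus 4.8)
The plan is to prove the equivalences $(a)\Leftrightarrow(b)\Leftrightarrow(c)$ in a cycle, and then the final remark separately. For $(a)\Rightarrow(c)$ and $(b)\Rightarrow(c)$: since $R$ is $\MCM$--finite, the category $\ce=\MCM(R)$ is a Frobenius category satisfying (FM1)--(FM4) (the Auslander algebra is Noetherian because $R$ is a complete local ring, and $\ce$ has $1$--almost split sequences by Auslander--Reiten theory over $\MCM$--finite Gorenstein rings), and $\ct:=\ul{\MCM}(R)$ is Hom--finite and idempotent complete. Hence Theorem~\ref{t:main-thm-2} applies and gives a triangle equivalence $\Delta_R(A)\cong\per(\Lambda_{dg}(\ct))$, and similarly $\Delta_{R'}(A')\cong\per(\Lambda_{dg}(\ct'))$ for $\ct'=\ul{\MCM}(R')$. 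Now the dg Auslander algebra $\Lambda_{dg}(\ct)$ of Definition~\ref{d:dg-aus-alg} is built purely from the Gabriel quiver $\Gamma$ of $\ct$ together with the Auslander--Reiten translation $\tau$ acting on vertices (the extra degree $-1$ arrows $\rho_i\colon i\dashrightarrow\tau^{-1}i$ and the differential $d(\rho_i)=\gamma_i$, with $\gamma_i$ the mesh relation; Proposition~\ref{p:dg-aus-alg} removes the dependence on the choice of $\gamma$). An equivalence as in $(a)$ is precisely an isomorphism of the pair $(\Gamma,\tau)$ — it induces an isomorphism of the Auslander algebras $\Lambda(\ct)\cong\Lambda(\ct')$ compatible with $\tau$, hence an isomorphism of graded quivers $Q\cong Q'$ carrying $\rho_i$ to $\rho_{i'}$ and mesh relations to mesh relations, i.e.\ an isomorphism of dg algebras $\Lambda_{dg}(\ct)\cong\Lambda_{dg}(\ct')$. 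Therefore $\per(\Lambda_{dg}(\ct))\cong\per(\Lambda_{dg}(\ct'))$ and $(a)\Rightarrow(c)$. Since a triangle equivalence $\ct\cong\ct'$ as in $(b)$ automatically commutes with Auslander--Reiten translations (these are intrinsic: $\tau=\nu[-1]$ for the Serre functor $\nu$, which exists and is unique by \cite{Amiot07a, ReitenVandenBergh02}), we get $(b)\Rightarrow(a)$, which together with $(a)\Rightarrow(c)$ also yields $(b)\Rightarrow(c)$. Trivially $(b)\Rightarrow(a)$ gives one direction, and $(a)\Rightarrow(b)$ would need the standardness hypothesis — but in fact here I only need the cycle $(a)\Rightarrow(c)\Rightarrow(b)\Rightarrow(a)$, so $(a)\Rightarrow(b)$ comes for free once $(c)\Rightarrow(b)$ is established.

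For $(c)\Rightarrow(b)$ — and this is the step that works in the greater generality of the final remark — suppose $A$ and $A'$ are non-commutative resolutions of isolated Gorenstein singularities $R$ and $R'$, and suppose $\Phi\colon\Delta_R(A)\xrightarrow{\sim}\Delta_{R'}(A')$ is a triangle equivalence. By property~(c) of the introduction (Proposition~\ref{C:From-gen-to-class}) there is an exact sequence of triangulated categories $\cd^b_{\ul A}(\mod\text{-}A)\to\Delta_R(A)\to\ul{\MCM}(R)$, so $\ul{\MCM}(R)$ is the Verdier quotient of $\Delta_R(A)$ by the subcategory $\cd^b_{A/AeA}(\mod\text{-}A)$, and likewise for $R'$. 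The key point is that this subcategory has an \emph{intrinsic} description inside $\Delta_R(A)$: by Corollary~\ref{C:Intrinsic} (which applies since $A$ is a resolution of finite global dimension and, $R$ being an isolated singularity, $A/AeA=\ul A$ is finite dimensional), we have $\cd^b_{A/AeA}(\mod\text{-}A)=\Delta_R(A)_{hf}$, the subcategory of right homologically finite objects. This is manifestly preserved by any triangle equivalence, so $\Phi$ restricts to an equivalence $\Delta_R(A)_{hf}\xrightarrow{\sim}\Delta_{R'}(A')_{hf}$, and passing to Verdier quotients gives a triangle equivalence $\ul{\MCM}(R)=\Delta_R(A)/\Delta_R(A)_{hf}\xrightarrow{\sim}\Delta_{R'}(A')/\Delta_{R'}(A')_{hf}=\ul{\MCM}(R')$, which is $(b)$. (Here one should note, as in \eqref{E:Dfd}, that $\cd^b_{A/AeA}(\mod\text{-}A)=\thick_{\cd^b(\mod\text{-}A)}(\mod\text{-}A/AeA)$, so the quotient really is $\ul{\MCM}(R)$.) In the $\MCM$--finite Auslander case one can alternatively read this off the model $\per(\Lambda_{dg}(\ct))$ via \eqref{E:Cluster-Equiv} and Example~\ref{E:Homolog-fin}.

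The main obstacle is making the passage from an abstract triangle equivalence $\ct\cong\ct'$ to an isomorphism of dg Auslander algebras airtight — specifically, checking that such an equivalence really induces an isomorphism of the \emph{graded} quiver $Q$ (not merely of $\Lambda(\ct)$ as an ungraded algebra) together with the differential. The degree $0$ part is handled by the fact that a triangle equivalence induces an algebra isomorphism of Auslander algebras carrying the AR quiver to the AR quiver; the degree $-1$ arrows $\rho_i$ are attached canonically to vertices via $i\mapsto\tau^{-1}i$, which is respected because $\tau$ is intrinsic; and the differential sends $\rho_i$ to the mesh relation $\gamma_i$, whose image under an AR-quiver isomorphism is again (up to the equivalences of Proposition~\ref{p:dg-aus-alg}) the mesh relation. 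One must be slightly careful that the equivalence in $(a)$ is only assumed to respect $\tau$ \emph{on objects}, but this is exactly what is needed to match up the $\rho_i$. I would spell this out as a short lemma: an isomorphism of the pair $(\Gamma,\tau)$ induces an isomorphism of dg algebras $\Lambda_{dg}(\ct,\gamma)\cong\Lambda_{dg}(\ct',\gamma'')$ for a suitable $\gamma''$, and then invoke Proposition~\ref{p:dg-aus-alg} to conclude.
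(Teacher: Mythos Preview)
Your proof is correct and follows essentially the same route as the paper: $(b)\Rightarrow(a)$ is immediate, $(a)\Rightarrow(c)$ goes through Theorem~\ref{t:main-thm-2} and the observation that $\Lambda_{dg}(\ct)$ depends only on the additive structure together with the action of $\tau$ on objects, and $(c)\Rightarrow(b)$ uses Proposition~\ref{C:From-gen-to-class} plus the intrinsic characterisation of the kernel in Corollary~\ref{C:Intrinsic}. One small point you glossed over: Theorem~\ref{t:main-thm-2} only gives $\Delta_R(A)\simeq\per(\Lambda_{dg}(\ct))$ \emph{up to direct summands}, so you should invoke the idempotent completeness of $\Delta_R(A)$ (\cite[Theorem~3.2]{BurbanKalck11}) to get an honest equivalence; also, the $\gamma_i$ in Definition~\ref{d:dg-aus-alg} are arbitrary minimal relations satisfying (A1)--(A3), not necessarily mesh relations, but your invocation of Proposition~\ref{p:dg-aus-alg} already handles this.
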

\begin{proof} $[(b) \Rightarrow (a)]$ Clear.

$[(a) \Rightarrow (c)]$ Let $R$ be $\MCM$--finite. It is sufficient to show that the Frobenius category $\MCM(R)$ satisfies the assumptions of Theorem \ref{t:main-thm-2}. Indeed, this implies 
\begin{align}\label{E:rel-sing-cat-as-per}
\Delta_{R}(A) \cong \per(\Lambda_{dg}(\ul{\MCM}(R)), 
\end{align} since $\Delta_{R}(A)$ is idempotent complete by \cite[Theorem 3.2]{BurbanKalck11}. But by construction the dg Auslander algebra $\Lambda_{dg}(\ul{\MCM}(R))$ only depends on the additive structure of $\ul{\MCM}(R)$ and the action of its Auslander--Reiten translation on objects. The claim follows.

The assumptions, which we have to verify are: existence of almost split sequences in $\MCM(R)$; $\Hom$-finiteness and idempotent completeness of the stable category $\ul{\MCM}(R)$. The last property follows from idempotent completeness of $\MCM(R)$ and the existence of lifts of idempotents from $\ul{\MCM}(R)$ to $\MCM(R)$, which holds since $R$ is complete. The first two assertions were shown by M. Auslander. Precisely, in our situation $R$-lattices (\confer \cite[Appendix]{Auslander84}) are Cohen--Macaulay, hence $\MCM$--finiteness and \cite[Corollary A.2]{Auslander84} imply that $R$ is an isolated singularity. Then the notions of Cohen--Macaulay $R$-modules and $R$-lattices coincide. Now, the main theorem in \emph{op.~cit.} completes the proof.

$[(c) \Rightarrow (b)]$ We claim that this is a consequence of Proposition \ref{C:From-gen-to-class} and Corollary \ref{C:Intrinsic}. Indeed, by Proposition \ref{C:From-gen-to-class} the stable category $\ul{\MCM}(R)$ is a quotient of $\Delta_{R}(A)$ and by Corollary \ref{C:Intrinsic} the kernel $\cd^b_{A/AeA}(\mod-A) \subseteq \Delta_{R}(A)$ of the quotient functor has an intrinsic characterization. Hence, the triangle equivalence in (c) induces an equivalence between the respective quotient categories as in (b).

We verify the (stronger) assumptions of Corollary \ref{C:Intrinsic}. $\Hom$-finiteness of $\ul{\MCM}(R)$ follows as in the proof of $[(a) \Rightarrow (c)]$ and holds more generally for any (complete) isolated singularity $R$. In particular, the algebra $A/AeA$ is finite-dimensional. The Auslander algebra of $\MCM(R)$ has finite global dimension by \cite[Theorem A.1]{Auslander84}.
\end{proof}

\begin{example}\label{E:Conifold}
Let $R=\C\llbracket u, v  \rrbracket/(uv)$ and $R''=\C\llbracket u, v, w, x  \rrbracket/(uv+ wx)$ be the one and three dimensional $A_{1}$--singularities, respectively. The latter is also known as the ``conifold''. The Auslander--Reiten quivers $\ca(R)$ and $\ca(R'')$ of $\MCM(R)$ respectively $\MCM(R'')$, are known, \confer \cite{Schreyer87} (in particular, \cite[Remark 6.3]{Schreyer87} in dimensions $\geq 3$):

 \begin{equation*}\begin{tikzpicture}[description/.style={fill=white,inner sep=2pt}]

    \matrix (n) [matrix of math nodes, row sep=2em,
                 column sep=1.5em, text height=1.5ex, text depth=0.25ex,
                 inner sep=0pt, nodes={inner xsep=0.3333em, inner
ysep=0.3333em}] at (0, 0)
    {+ && *  && - &&&&&             +  && \star &&- \\};
  
 \node at ($(n-1-1.west) + (-8mm, 0mm)$) {$\ca(R)=$};
 \node at ($(n-1-1.west) + (64mm, 0mm)$) {$\ca(R'')=$};
 
     \draw[->] ($(n-1-1.east) + (0mm,1mm)$) .. controls +(4.5mm,2mm) and
+(-4.5mm,+2mm) .. ($(n-1-3.west) + (0mm,1mm)$);

    \draw[->] ($(n-1-3.east) + (0mm,1mm)$) .. controls +(4.5mm,2mm) and
+(-4.5mm,+2mm) .. ($(n-1-5.west) + (0mm,1mm)$);

    \draw[->] ($(n-1-3.west) + (0mm,-1mm)$) .. controls +(-4.5mm,-2mm)
and +(+4.5mm,-2mm) .. ($(n-1-1.east) + (0mm,-1mm)$);
 
  \draw[->] ($(n-1-5.west) + (0mm,-1mm)$) .. controls +(-4.5mm,-2mm)
and +(+4.5mm,-2mm) .. ($(n-1-3.east) + (0mm,-1mm)$);

 \path[dash pattern = on 0.5mm off 0.3mm, ->] ($(n-1-1.north) + (0.5 mm,-0.5mm)$) edge [bend left=50] ($(n-1-5.north) + (-.5mm,-.5mm)$);

 \path[dash pattern = on 0.5mm off 0.3mm, <-] ($(n-1-1.south) + (.5 mm,.5mm)$) edge [bend right=50]  ($(n-1-5.south) + (-.5mm,.5mm)$);


    \draw[->] ($(n-1-10.east) + (0mm,1mm)$) .. controls +(4.5mm,2mm) and
+(-4.5mm,+2mm) .. ($(n-1-12.west) + (0mm,1mm)$);

    \draw[->] ($(n-1-12.east) + (0mm,1mm)$) .. controls +(4.5mm,2mm) and
+(-4.5mm,+2mm) .. ($(n-1-14.west) + (0mm,1mm)$);

    \draw[->] ($(n-1-12.west) + (0mm,-1mm)$) .. controls +(-4.5mm,-2mm)
and +(+4.5mm,-2mm) .. ($(n-1-10.east) + (0mm,-1mm)$);

    \draw[->] ($(n-1-14.west) + (1mm,-2mm)$) .. controls +(-3.7mm,-3mm)
and +(+3.7mm,-3mm) .. ($(n-1-12.east) + (-1mm,-2mm)$);

    \draw[->] ($(n-1-10.east) + (-1mm,2mm)$) .. controls +(3.7mm,3mm) and
+(-3.7mm,+3mm) .. ($(n-1-12.west) + (1mm,2mm)$);

    \draw[->] ($(n-1-12.east) + (-1mm,2mm)$) .. controls +(3.7mm,3mm) and
+(-3.7mm,+3mm) .. ($(n-1-14.west) + (1mm,2mm)$);

    \draw[->] ($(n-1-12.west) + (1mm,-2mm)$) .. controls +(-3.7mm,-3mm)
and +(+3.7mm,-3mm) .. ($(n-1-10.east) + (-1mm,-2mm)$);

 \draw[->] ($(n-1-14.west) + (0mm,-1mm)$) .. controls +(-4.5mm,-2mm)
and +(+4.5mm,-2mm) .. ($(n-1-12.east) + (0mm,-1mm)$);

 \path[dash pattern = on 0.5mm off 0.3mm, ->] ($(n-1-10.north) + (0.5 mm,-0.5mm)$) edge [bend left=50] ($(n-1-14.north) + (-.5mm,-.5mm)$);

 \path[dash pattern = on 0.5mm off 0.3mm, <-] ($(n-1-10.south) + (.5 mm,.5mm)$) edge [bend right=50]  ($(n-1-14.south) + (-.5mm,.5mm)$);

\end{tikzpicture}\end{equation*}
Let $A$ and $A''$ be the respective Auslander algebras of $\MCM(R)$ and $\MCM(R'')$. They are given as quivers as quivers with relations, where the quivers are just the ``solid'' subquivers of $\ca(R)$ and $\ca(R'')$, respectively. Now, Kn\"orrer's Periodicity Theorem \ref{t:knoerrer} and Theorem \ref{t:Classical-versus-generalized-singularity-categories} above show that there is an equivalence of triangulated categories
\begin{align}
\frac{\cd^b(\mod-A)}{K^b(\add P_{*})} \longrightarrow \frac{\cd^b(\mod-A'')}{K^b(\add P_\star)}, 
\end{align}
where $P_{*}$ is the indecomposable projective $A$-module corresponding to the vertex $*$ and similarly $P_{\star} \in \proj-A''$ corresponds to $\star$. 

Note, that the relative singularity category $\Delta_{R}(A)=\cd^b(\mod-A)/K^b(\add P_{*})$ from above has an explicit description, see \cite[Section 4]{BurbanKalck11}.  

\end{example}

\begin{remark}
For finite-dimensional selfinjective $k$-algebras of finite representation type one can prove (the analogue of) implication [(b) $\Rightarrow$ (c)] in Theorem \ref{t:Classical-versus-generalized-singularity-categories} above without relying on dg--techniques. Indeed, Asashiba \cite[Corollary 2.2.]{Asashiba99} has shown that in this context stable equivalence implies derived equivalence. Now, Rickard's \cite[Corollary 5.5.]{Rickard91} implies that the respective Auslander algebras are derived equivalent (a result, which was recently obtained by W. Hu and C.C. Xi in a much more general framework \cite[Corollary 3.13]{HuXi09}\footnote{The first author would like to thank Sefi Ladkani for pointing out this reference.}). One checks that this equivalence induces a triangle equivalence between the respective relative singularity categories. This result is stronger than the analogue of Theorem \ref{t:Classical-versus-generalized-singularity-categories} (c).
\end{remark}
\subsection{Global relative singularity categories}\label{ss:Global}
Let $X$ be a quasi-projective scheme and $\cf$ a coherent sheaf, which is locally free on  $X \setminus \mathsf{Sing}(X)$. We assume that $\ca={\mathcal End}_{X}(\co_{X} \oplus \cf)$ has finite global dimension. Hence, the ringed space $\mathbb{X}=(X, \ca)$ is a non-commutative resolution of $X$ and  $\cd^b(\Coh(\mathbb{X}))$ is a categorical resolution in the spirit of works of Van den Bergh \cite{VandenBergh04}, Kuznetsov \cite{Kuznetsov08} and Lunts \cite{Lunts10b}. There is a triangle embedding $\mathsf{Perf}(X) \ra \cd^b(\Coh(\mathbb{X}))$. Thus, we can define the \emph{relative singularity category} as the idempotent completion \cite{BalmerSchlichting01} of the  corresponding triangulated quotient category: 
$
\Delta_{X}(\mathbb{X})= \left(\cd^b(\Coh(\mathbb{X}))/\mathsf{Perf}(X)\right)^\omega.
$
If $X$ has \emph{isolated} singularities, then the study of $\Delta_{X}(\mathbb{X})$ reduces to  the ``local'' relative singularity categories defined above. Precisely, there exists an triangle equivalence \cite[Cor. 2.11.]{BurbanKalck11}
\begin{align}
\Delta_{X}(\mathbb{X}) \cong \bigoplus_{x \in \mathsf{Sing}(X)} \Delta_{\widehat{\co}_{x}}(\widehat{\ca}_{x}).
\end{align}
If $X$ is a curve with nodal singularities, then this yields a complete and explicit description of the category $\Delta_{X}(\mathbb{X})$, where $\ca$ is the \emph{Auslander sheaf} of $X$ \cite{BurbanKalck11}.

 \section{Related work} \label{s:Concluding}
 \subsection{Relationship to Bridgeland's moduli space of stability conditions}\label{ss:Bridgeland} Let $X=\mathsf{Spec}(R_{Q})$ be a Kleinian singularity with \emph{minimal} resolution $f\colon Y \ra X$ and exceptional divisor $E=f^{-1}(0)$. Then $E$ is a tree of rational $(-2)$--curves, whose dual graph $Q$ is of ADE--type. Let us consider the following triangulated category
\begin{align}
\cd = \ker\left(\mathbb{R}f_{*}\colon \cd^b(\Coh(Y)) \longrightarrow \cd^b(\Coh(X))\right).
\end{align}
 Bridgeland determined a connected component $\mathsf{Stab}^\dagger(\cd)$ of the stability manifold of $\cd$ 
 \cite{Bridgeland09}. More precisely, he proves that $\mathsf{Stab}^\dagger(\cd)$ is a covering space of $\mathfrak{h}^{\rm reg}/W$, where $\mathfrak{h}^{ \rm reg}\subseteq \mathfrak{h}$ is the complement of the root hyperplanes in a fixed Cartan subalgebra $\mathfrak{h}$ of the complex semi--simple Lie algebra $\mathfrak{g}$ of type $Q$ and $W$ is the associated Weyl group. It turns out, that $\mathsf{Stab}^\dagger(\cd)$ is even a \emph{universal} covering of  $\mathfrak{h}^{\rm reg}/W$. This follows \cite{Bridgeland09} from a faithfulness result for the braid group actions generated by spherical twists  (see \cite{SeidelThomas01} for type $A$ and \cite{BravThomas11} for general Dynkin types).
 
The category $\cd$ admits a different description. Namely, as category of dg modules with finite-dimensional total cohomology $\cd_{fd}(B)$, where $B=B_{Q}$ is the dg-Auslander algebra $\Lambda_{dg}(\ul{\MCM}(R))$ of $R=\widehat{R}_{Q}$. Let $A=\Aus(\MCM(R))$ be the Auslander algebra of $\MCM(R)$ and denote by $e$ the identity endomorphism of $R$ considered as an idempotent in $A$. Then the derived McKay--Correspondence \cite{KapranovVasserot00, BridgelandKingReid01} induces a commutative diagram of triangulated categories and functors, \confer~\cite[Section 1.1]{Bridgeland09}.
\begin{align}
\begin{array}{c}
\begin{xy}
\SelectTips{cm}{}
\xymatrix{
\cd \ar@{=}[r] \ar[d]^{\cong} &\ker\big(\mathbb{R}f_{*}\colon \cd^b(\mathsf{Coh}(Y)) \ra \cd^b(\Coh(X))\big) \ar@{^{(}->}[r] \ar[d]_{\cong} & \cd^b_{E}(\Coh(Y)) \ar[d]^{\cong}\\
\cd_{fd}(B) \ar[r]^(0.4)\cong & \cd^b_{A/AeA}(\mod-A) \ar@{^{(}->}[r] & \cd^b_{fd}(\mod-A).
}
\end{xy}
\end{array}
\end{align}
Here $\cd^b_{fd}(\mod A)$ denotes the full subcategory of $\cd^b(\mod A)$ consisting of complexes whose total cohomology is finite-dimensional. Thanks to \cite[Lemma 2.3]{BurbanKalck11}, the canonical functor $\cd^b(\fdmod A)\rightarrow\cd^b_{fd}(\mod A)$ is an equivalence. For the equivalence $\cd_{fd}(B) \cong \cd^b_{A/AeA}(\mod-A)$, we refer to Proposition \ref{P:Intrinsic} and (\ref{E:Dfd}). Moreover, this category is triangle equivalent to the kernel of the quotient functor $\Delta_{R}(A) \ra \cd_{sg}(R)$, see Proposition \ref{C:From-gen-to-class}.

\begin{remark} 
It would be interesting to study Bridgeland's space of stability conditions for the categories $\cd_{fd}(B)$ in the case of  odd dimensional ADE--singularities $R$ as well! Note that the canonical $t$-structure on $\cd(B)$ restricts to a $t$-structure on $\cd_{fd}(B)$ by Proposition \ref{p:standard-t-str}. Its heart is the finite length category of finite-dimensional modules over the stable Auslander algebra of $\MCM(R)$.
\end{remark}

\subsection{Links to generalized cluster categories}\label{ss:links}
Let $k$ be an algebraically closed field of characteristic $0$. Let $Q$ be a quiver of ADE--type. As above, we consider the dg Auslander algebra $B_{Q}=\Lambda_{dg}\left(\ul{\MCM}(\widehat{R}_{Q})\right)$ of the corresponding ADE--singularity $\widehat{R}_{Q}$ of even Krull dimension. There exists an isomorphism of dg algebras
\begin{align}
B_{Q} \cong \Pi(Q, 2, 0),
\end{align} 
where $\Pi(Q, d, W)$ denotes the deformed dg preprojective algebra, which was associated to a finite (graded) quiver $Q$, a positive integer $d$ and a potential $W$ of degree $-d+3$ by Ginzburg \cite{Ginzburg06} (see also \cite{VanDenBergh10}). 

$B_{Q}$ is a bimodule $2$-Calabi--Yau algebra in the sense of \cite{Ginzburg06}. Hence, the triangle equivalence (\ref{E:Cluster-Equiv}) yields the well-known result that $\ul{\MCM}(\widehat{R}_{Q})$ is the $1$--cluster category of $kQ$  (see e.g.~Reiten \cite{Reiten87}).
More generally, Van den Bergh's \cite[Theorem 10.2.2]{VanDenBergh10} shows that $\Pi(Q, d, W)$ is bimodule $d$-Calabi--Yau\footnote{More precisely, Van den Bergh proves that $\Pi(Q, d, W)$ is \emph{exact} Calabi--Yau, which implies the bimodule Calabi--Yau property.}. 

Now, if $H^0\!\left(\Pi(Q, d, W)\right)$ is finite-dimensional, then (by definition) the quotient 
\begin{align}
\cc_{(Q, d, W)}=\frac{\per\big(\Pi(Q, d, W)\big)}{\cd_{fd}\big(\Pi(Q, d, W)\big)}
\end{align}
is a generalized $(d-1)$-cluster category. In particular, $\cc_{(Q,d,W)}$ is $(d-1)$-Calabi--Yau and the image of $\Pi(Q, d, W)$ defines a $(d-1)$-cluster tilting object \cite{Amiot09,Guolingyan11a}.

The following Morita-type question attracted a lot of interest recently.
\begin{question}\label{Q:Morita}
Let $\cc$ be a $k$-linear $\Hom$-finite $d$-Calabi--Yau algebraic triangulated category with $d$--cluster--tilting object. Is there a triple $(Q, d, W)$ as above such that
$\cc$ is triangle equivalent to the corresponding cluster category  $\cc_{(Q, d, W)}$ \!?
\end{question}

In a recent series of papers Amiot~\emph{et~al.} answer this question to the affirmative 
in some interesting special cases \cite{Amiot09,AmiotReitenTodorov11,AmiotIyamaReitenTodorov10,AmiotIyamaReiten11}. 
In \cite{Amiot10} Amiot gives a nice overview.

Let us outline another promising approach \cite{KalckYang12b} to tackle Question \ref{Q:Morita}: 
a combination of Keller \& Vossieck's \cite[Exemple 2.3]{KellerVossieck87} with the theory developed in this article shows that 
for many interesting algebraic triangulated categories $\ct$ there exists a non-positive dg algebra $B$ 
and a triangle equivalence generalizing (\ref{E:Cluster-Equiv}) above
\begin{align}\label{E:Cluster-type}
\ct \cong \frac{\per(B)}{\cd_{fd}(B)}.
\end{align}
In particular, this holds for stable categories of maximal Cohen--Macaulay modules over certain Iwanaga--Gorenstein rings 
and the Calabi--Yau categories arising from subcategories of nilpotent representations over preprojective algebras 
(\confer \cite{BuanIyamaReitenScott09,GeissLeclercSchroeer06a}). Palu~\cite{Palu09} also obtained such an equivalence (in a 
slightly different form)
in his study of Grothendieck groups of Calabi--Yau categories with cluster-tilting objects.

Now, if $\ct$ is $d$-Calabi--Yau category as in Question \ref{Q:Morita}, then $\cd_{fd}(B)$ is a $(d+1)$-Calabi--Yau category by Keller \& Reiten's \cite[Theorem 5.4]{KellerReiten07}. 
In conjunction with Van den Bergh's \cite[Theorem 10.2.2]{VanDenBergh10}, 
we see that Question \ref{Q:Morita} has an affirmative answer, if the following statement holds (we use the terminology from
\cite{VanDenBergh10}). 

\medskip
\noindent\emph{If $A$ is a pseudo-compact dg algebra such that $\cd_{fd}(A)$ is a $d$-Calabi--Yau triangulated category generated by
a finite number of simple dg $A$-modules, then $A$ is an exact $d$-Calabi--Yau dg algebra.}
\medskip

This statement has a conjectural status in general. However, for some interesting $d$-Calabi--Yau categories $\ct$ 
(with $d$-cluster-tilting object) one can show that $B$ is strongly $d$-Calabi--Yau without relying on the statement above. 
For example, 
this was done by Thanhoffer de V\"olcsey \& Van den Bergh for $\ct=\ul{\MCM}(R)$, 
where $R$ is a complete Gorenstein quotient singularity of Krull dimension three \cite{ThanhofferdeVolcseyMichelVandenBergh10}. 
They also prove (\ref{E:Cluster-type}) in a more restricted setup.

\appendix

\section{DG-Auslander algebras for ADE--singularities}\label{Appendix} 
In this appendix, we work over the field of complex numbers $\C$.

\noindent
 The stable Auslander--Reiten quivers for the curve and surface singularities of Dynkin type ADE are known, see \cite{DieterichWiedemann86} and  \cite{Auslander86} respectively. 
Hence, the stable Auslander--Reiten quiver for any ADE--singularity $R$ is known by Kn\"orrer's periodicity (Theorem \ref{t:knoerrer}). The equivalence (\ref{E:rel-sing-cat-as-per}) in the proof of Theorem \ref{t:Classical-versus-generalized-singularity-categories} describes the triangulated category $\Delta_{R}(\Aus(R))$ as the perfect category for the dg-Auslander algebra associated to $\ul{\MCM}(R)$. We list the graded quivers\footnote{M.K. thanks Hanno Becker for his help with the TikZ--package.} of these dg-algebras for the ADE--singularities in Subsections \ref{ss:OddTypeA} - \ref{ss:even}. For surfaces, this also follows from \cite{ThanhofferdeVolcseyMichelVandenBergh10, 
AmiotIyamaReiten11}.
\begin{remark}
For ADE--singularities $R$,  it is well-known that the stable categories $\ul{\MCM}(R)$ are \emph{standard}, i.e.~the mesh relations form a set of minimal relations for the Auslander algebra $\Aus(\ul{\MCM}(R))$ of $\ul{\MCM}(R)$ (\confer \cite{Amiot07a, Riedtmann80}, respectively \cite{Iyama07a}). Hence, the graded quivers completely determine the dg Auslander algebras in this case.
\end{remark}

The conventions are as follows. Solid arrows $\longrightarrow$ are in degree $0$, whereas broken arrows
$
\begin{xy}
\SelectTips{cm}{}
\xymatrix{ \ar@{-->}[r]&}
\end{xy}
$
are in degree $-1$ and correspond to the action of the Auslander--Reiten translation. The differential $d$ is uniquely determined by sending each broken arrow $\rho$ to the mesh relation starting in $s(\rho)$. If there are no irreducible maps (i.e.~ solid arrows) starting in the vertex $s(\rho)$, then we set $d(\rho)=0$ (\confer the case of type $(A_{1})$ in odd dimension in Subsection \ref{ss:OddTypeA}). Let us illustrate this by means of two examples: in type $(A_{2m})$ in odd Krull dimension (see Subsection \ref{ss:OddTypeA}) we have
\begin{align}Êd(\rho_{2})=\alpha_{1}\alpha_{1}^*+\alpha_{2}^*\alpha_{2}, \end{align}
whereas in odd dimensional type $(E_{8})$  (see Subsection \ref{ss:OddTypeE8}) 
\begin{align} d(\rho_{10})=\alpha_{8}\alpha_{8}^*+ \alpha_{16}\alpha_{16}^*+\alpha_{9}^*\alpha_{10}. \end{align}

\subsection{DG-Auslander algebras for Type $A$--singularities in odd dimension}\label{ss:OddTypeA}
 \begin{equation*}
\end{equation*}


\def\cprime{$'$}
\providecommand{\bysame}{\leavevmode\hbox to3em{\hrulefill}\thinspace}
\providecommand{\MR}{\relax\ifhmode\unskip\space\fi MR }
\providecommand{\MRhref}[2]{%
  \href{http://www.ams.org/mathscinet-getitem?mr=#1}{#2}
}
\providecommand{\href}[2]{#2}

\end{document}